\definecolor{dark-red}{rgb}{0.5,0.15,0.15}
\definecolor{dark-blue}{rgb}{0.15,0.15,0.6}
\definecolor{dark-green}{rgb}{0.15,0.6,0.15}
\newcommand{\iHom}{\underline{\operatorname{Hom}}}
\newcommand{\iExt}{\underline{\operatorname{Hom}}}
\renewcommand*{\backref}[1]{}
\renewcommand*{\backrefalt}[4]{%
  \ifcase #1 %
No citations.
  \or
(cit. on p. #2).%
  \else
(cit on pp. #2).%
  \fi%
}
\newtheorem{thm}{Theorem}[section]
\newtheorem{cor}[thm]{Corollary}
\newtheorem{prop}[thm]{Proposition}
\newtheorem{lem}[thm]{Lemma}
\newtheorem{defn}[thm]{Definition}
\newtheorem{exmp}[thm]{Example}
\newtheorem{rem}[thm]{Remark}
\newtheorem{conv}[thm]{Convention}
\let\c@equation\c@thm
\numberwithin{equation}{section}
\DeclareMathOperator{\Sp}{Sp}
\DeclareMathOperator{\Hom}{Hom}
\DeclareMathOperator{\End}{End}
\DeclareMathOperator{\colim}{colim}
\DeclareMathOperator{\cA}{\mathcal{A}}
\DeclareMathOperator{\cC}{\mathcal{C}}
\DeclareMathOperator{\cD}{\mathcal{D}}
\DeclareMathOperator{\cE}{\mathcal{E}}
\DeclareMathOperator{\cP}{\mathcal{P}}
\DeclareMathOperator{\cS}{\mathcal{S}}
\DeclareMathOperator{\cI}{\mathcal{I}}
\DeclareMathOperator{\cF}{\mathcal{F}}
\DeclareMathOperator{\cG}{\mathcal{G}}
\DeclareMathOperator{\Id}{\mathrm{Id}}
\DeclareMathOperator{\fX}{\mathfrak{X}}
\DeclareMathOperator{\cO}{\mathcal{O}}
\DeclareMathOperator{\Ext}{Ext}
\DeclareMathOperator{\Map}{Map}
\DeclareMathOperator{\Tor}{Tor}
\DeclareMathOperator{\Spec}{Spec}
\DeclareMathOperator{\Mod}{Mod}
\DeclareMathOperator{\Stable}{Stable}
\DeclareMathOperator{\Comod}{Comod}
\DeclareMathOperator{\Ch}{Ch}
\DeclareMathOperator{\D}{\mathcal{D}}
\DeclareMathOperator{\Kos}{Kos}
\DeclareMathOperator{\fib}{fib}
\DeclareMathOperator{\Loc}{Loc}
\DeclareMathOperator{\Ho}{Ho}
\DeclareMathOperator{\Thick}{Thick}
\DeclareMathOperator{\Ind}{Ind}
\DeclareMathOperator{\Fun}{Fun}
\DeclareMathOperator{\stable}{Stable}
\DeclareMathOperator{\Aff}{Aff}
\DeclareMathOperator{\Ob}{Ob}
\DeclareMathOperator{\QCoh}{QCoh}
\DeclareMathOperator{\IndCoh}{IndCoh}
\DeclareMathOperator{\im}{Im}
\newcommand{\N}{\mathbb{N}}
\newcommand{\Q}{\mathbb{Q}}
\DeclareMathOperator{\Dinj}{\mathcal{D}^{\mathrm{inj}}}
\DeclareMathOperator{\Dinjpsi}{\mathcal{D}^{\mathrm{inj}}_{\Psi}}
\newcommand{\sh}{H}
\newcommand{\Locid}[1]{\mathrm{Loc}_{#1}^{\otimes}}
\newcommand{\fZ}{\mathfrak{Z}}
\newcommand{\tComod}{\Comod_\Psi^{I-\textnormal{tors}}}
\newcommand{\m}{\frak{m}}
\newcommand{\G}{\Gamma}
\newcommand{\cK}{\mathcal{K}}
\newcommand{\xr}{\xrightarrow}
\newcommand{\CH}{\check{C}H}
\newcommand{\Z}{\mathbb{Z}}
\newcommand{\tMod}[1]{\Mod_{#1}^{I-\textnormal{tors}}}
\newcommand{\uExt}{\underline{\Ext}}
\newcommand{\psilim}{\lim_{\Psi,k}}
\newcommand{\Stableh}{\mathrm{Stable}^\mathrm{h}}
\renewcommand{\frak}{\mathfrak}
\newcommand{\ilim}{\lim{\vphantom \lim}}
\let\lim\relax
\DeclareMathOperator{\lim}{lim}
\begin{document}

\title{Local duality in algebra and topology}

\author{Tobias Barthel}
\address{Department of Mathematical Sciences, University of Copenhagen, Universitetsparken 5, 2100 K{\o}benhavn {\O}, Denmark}


\author{Drew Heard}
\address{Department of Mathematics, University of Haifa, Mount Carmel, Haifa, 31905, Israel}

\author{Gabriel Valenzuela}
\address{Department of Mathematics, The Ohio State University, 100 Math Tower,
231 West 18th Avenue,	
Columbus, OH 43210-1174, USA}

\begin{abstract}
The first goal of this paper is to provide an abstract framework in which to formulate and study local duality in various algebraic and topological contexts. For any stable $\infty$-category $\cC$ together with a collection of compact objects $\mathcal{K} \subset \cC$ we construct local cohomology and local homology functors satisfying an abstract version of local duality. When specialized to the derived category of a commutative ring $A$ and a suitable ideal in $A$, we recover the classical local duality due to Grothendieck as well as generalizations by Greenlees and May. More generally, applying our result to the derived category of quasi-coherent sheaves on a quasi-compact and separated scheme $X$ implies the local duality theorem of Alonso Tarr{\'i}o, Jerem{\'i}as L{\'o}pez, and Lipman.

As a second objective, we establish local duality for quasi-coherent sheaves over many algebraic stacks, in particular those arising naturally in stable homotopy theory. After constructing an appropriate model of the derived category in terms of comodules over a Hopf algebroid, we show that, in familiar cases, the resulting local cohomology and local homology theories coincide with functors previously studied by Hovey and Strickland. Furthermore, our framework applies to global and local stable homotopy theory, in a way which is compatible with the algebraic avatars of these theories. In order to aid computability, we provide spectral sequences relating the algebraic and topological local duality contexts. 
\end{abstract}

\maketitle

{\hypersetup{linkcolor=black}\tableofcontents}

\section{Introduction}

\addtocontents{toc}{\protect\setcounter{tocdepth}{1}}
\subsection*{Motivation and main goals}

Local cohomology was introduced by Grothendieck as a substitute in algebraic geometry for the relative cohomology of a pair $U \subseteq X$ of spaces. In \cite{grothendieck_localcohomology,sga2}, he develops the foundations of the theory, which culminates in his local duality theorem, a local avatar of Serre duality. As such, it plays an important role in commutative algebra, algebraic geometry, representation theory, as well as algebraic approximations to homotopy theory.

Subsequently, this theory has been extended in many directions. In particular, for $I$ an ideal in a suitable commutative ring $A$, Greenlees and May~\cite{gm_localhomology, greenleesmay_completions} construct local homology functors $H_*^{I}$ on the derived category of $A$ as a homology theory dual to local cohomology $H_{I}^*$, and interpret Grothendieck's local duality as a special case of a spectral sequence
\begin{equation}\label{eqintro:gmss}
E_2^{p,q} = \Ext_A^p(H_I^{-q}(A),M) \implies H_{-(p+q)}^I(M),
\end{equation}
known as Greenlees--May duality. Moreover, they show that local homology is related to derived functors of $I$-adic completion in the same way that local cohomology is related to derived functors of $I$-power torsion. As a further generalization, Alonso Tarr{\'i}o, Jerem{\'i}as L{\'o}pez, and Lipman~\cite{local_cohom_schemes} globalize this theory to quasi-coherent sheaves over an appropriate scheme or even formal scheme. 

However, shadows of local cohomology and local homology functors together with the corresponding local duality type results appear in various areas that are not covered by the theory described above. As we will explain shortly, local duality admits a uniform and conceptual explanation when working in a more categorical context, at the same time simplifying constructions and computations as well as uncovering new instances of the theory by specializing the resulting abstract duality results. In light of this, the goal of the present paper is threefold. 
\begin{enumerate}
	\item Produce a simple abstract framework which allows the construction of local cohomology and homology in general categorical contexts, and deduce Greenlees--May duality formally in this framework. 
	\item Develop the theory for categories of comodules over certain Hopf algebroids, which translates into local duality for quasi-coherent sheaves over a general class of algebraic stacks. As an application, we generalize classical structural results about derived completion for local rings to certain algebraic stacks.
	
	\item Exhibit local duality as a fundamental phenomenon appearing in a variety of contexts, as for example stable, equivariant, and motivic homotopy theory. In particular, we study local duality in stable homotopy theory, recovering and generalizing many well-known results in chromatic homotopy theory from a more conceptual point of view. Moreover, we show that these different topological and algebraic incarnations are compatible with each other in a strong sense.  
\end{enumerate}

\subsection*{Outline of the paper}

We now describe the main results of this paper in more detail. As a starting point, we implicitly work within a homotopical enrichment of triangulated categories unless stated otherwise, and refer to such a model as a stable category. For concreteness, the reader can think of this as a pre-triangulated dg-category, a stable model category, or a stable $\infty$-category. 

Suppose $\cC$ is a closed symmetric monoidal, presentable stable category, and let $\mathcal{K} \subset \cC$ be a collection of compact objects; we refer to the pair $(\cC,\mathcal{K})$ as a local duality context. Following~\cite{hps_axiomatic, dwyer_complete_2002}, in Section~\ref{sec:abstract}, we construct from this data two functors $\Gamma = \Gamma_{\mathcal{K}}$ and $\Lambda = \Lambda_{\mathcal{K}}$ which are abstract versions of local cohomology and local homology with respect to $\mathcal{K}$, respectively. These functors have many desirable properties and in particular satisfy an abstract version of local duality.

\begin{thm}[Abstract local duality]
Let $(\cC,\mathcal{K})$ be a local duality context as above. For all $X,Y \in \cC$, there is a natural equivalence 
\begin{equation}\label{eqintro:localduality}
\xymatrix{\iExt(\Gamma X,Y) \simeq \iExt(X,\Lambda Y),}
\end{equation}
where $\iExt$ denotes the derived internal Hom object of $\cC$. Moreover, there exists a filtered system $(\Kos_s)_s$ of compact objects in $\cC$ such that
\[
\Gamma(-) \simeq \colim_s \Kos_s \otimes - \quad \text{and} \quad \Lambda(-) \simeq \lim_s D\Kos_s \otimes -.
\]
\end{thm}

In fact, these functors can be organized in a diagram of adjoints,
\begin{equation}\label{eqintro:diagram}
\xymatrix{& \cC^{\text{loc}} \ar@<0.5ex>[d] \ar@{-->}@/^1.5pc/[ddr] \\
& \cC \ar@<0.5ex>[u]^{L} \ar@<0.5ex>[ld]^{\Gamma} \ar@<0.5ex>[rd]^{\Lambda} \\
\cC^{\text{tors}} \ar@<0.5ex>[ru] \ar[rr]_{\sim} \ar@{-->}@/^1.5pc/[ruu] & & \cC^{\text{comp}}, \ar@<0.5ex>[lu]}
\end{equation}
in which the dotted maps indicate passing to the left orthogonal subcategory; we refer the reader to Section~\ref{sec:abstract} for the unexplained terminology in \eqref{eqintro:diagram}. Other formal consequences of the general theory include the existence of a fracture square relating $L$ and $\Lambda$, as well as an abstract version of affine duality in this context. 

In order to see the theory in action, Section~\ref{sec:modules} starts with the toy case that $\cC=\cD_A$ is the derived category of modules over a  commutative ring $A$ and $\mathcal{K} = \{A/I\}$ for an appropriate ideal $I \subset A$. We show that abstract local duality applied to the local duality context $(\cD_A,A/I)$ immediately recovers Greenlees--May duality and thus Grothendieck's local duality theorem. In particular, $\Gamma \simeq H_I$ and $\Lambda \simeq H^I$ in this case. 

\begin{thm}
Let $A$ be a reasonable commutative ring and $I \subset A$ an appropriate ideal; for the precise conditions on $A$ and $I$, see~\Cref{thm:main_D_R}. Abstract local duality for $(\cD_A,A/I)$ gives an isomorphism
\[
\Ext^*(H_I^*(A),M) \cong H_*^I(M)
\]
for all $M \in \cD_A$, whose associated Grothendieck spectral sequence coincides with the Greenlees--May spectral sequence \eqref{eqintro:gmss}.
\end{thm}

The case of module spectra over a commutative ring spectrum is also considered, and does not cause any extra difficulties. Moreover, we construct several spectral sequences calculating local cohomology and local homology groups which have been obtained by different methods in~\cite{greenleesmay_completions}, and deduce many well-known properties of these groups. Since local homology can now easily be identified with derived completion, as another application we discuss how to quickly reprove some results about derived completion due to Greenlees and May.

We then turn to the main example of interest in this paper, namely comodules over a flat Hopf algebroid $(A,\Psi)$. Hopf algebroids are a natural generalization of Hopf algebras, so include examples like the group algebra $k[G]$ and the Steenrod algebra, and form a link between stable homotopy theory and algebraic geometry: On the one hand, they arise as the cooperations $R_*R$ of flat ring spectra $R$, and on the other hand they provide rigid models for a large class of algebraic stacks. 

Although the category of comodules $\Comod_{\Psi}$ is a Grothendieck abelian category and thus has a corresponding derived category $\cD_{\Psi}$, this latter category has several deficiencies. Our first task is therefore to define an appropriate replacement $\Stable_{\Psi}$ of $\cD_{\Psi}$. Inspired by Krause's work~\cite{krausestablederivedcat}, in Section~\ref{sec:comodules} the stable category $\Stable_{\Psi}$ is constructed as an ind-category generated by the collection of dualizable objects and is shown to satisfy all the conditions we need for applying abstract local duality. This approach is familiar in the setting of stacks, where the category of ind-coherent sheaves is often better behaved than the (derived) category of quasi-coherent sheaves.  Moreover, we show that $\Stable_{\Psi}$ can be identified with the underlying stable category of the model category studied by Hovey in~\cite{hovey_htptheory}.

In Section~\ref{sec:comodules-duality}, we apply abstract local duality to the local duality context $(\Stable_{\Psi}, A/I)$, where $I \subseteq A$ is a suitable ideal. We then identify the local cohomology functors $\Gamma_I$ constructed abstractly, showing that they are compatible with the ordinary local cohomology of underlying modules. 

\begin{thm}
Suppose $(A,\Psi)$ is a Hopf algebroid and $I \subseteq A$ is a finitely generated regular invariant ideal, then there are local cohomology $\Gamma_I$ and homology functors $\Lambda^I$ on $\Stable_{\Psi}$ such that there is an equivalence
\[
\iHom_{\Psi}(\Gamma_IM,N) \simeq \iHom_{\Psi}(M,\Lambda^IM)
\]
for all $M,N \in \Stable_{\Psi}$. Moreover, the underlying $A$-module of $\Gamma_IM$ is equivalent to the local cohomology of the underlying $A$-module of $M$.
\end{thm}

This result translates immediately into a statement about quasi-coherent sheaves on certain algebraic stacks in view of the equivalence
\[
\Stable_{\Psi} \simeq \IndCoh_{\fX}, 
\]
where $\fX$ is the stack presented by $\Psi$ and $\IndCoh_{\fX}$ is the stable category of ind-coherent sheaves over $\fX$. We therefore obtain a construction of local cohomology $\Gamma_{\fZ}$ and local homology $\Lambda^{\fZ}$ for a certain class of algebraic stacks with respect to a closed substack $\fZ$, together with an associated local duality theorem. 

\begin{thm}
Suppose $\fX$ is a presentable algebraic stack and $\fZ \subseteq \fX$ is a closed substack cut out by an appropriate ideal $I$ of $A$, then there exists an equivalence
\[
\iHom_{\fX}(\Gamma_{\fZ}\cF,\cG) \simeq \iHom_{\fX}(\cF,\Lambda^{\fZ}\cG)
\]
for all $\cF,\cG \in \IndCoh_{\fX}$. Moreover, the pullback of $\Gamma_{\fZ}\cF$ to the affine cover $f\colon \Spec(A) \to \fX$ is equivalent to $\Gamma_{I}f^*\cF$. 
\end{thm}

In stark contrast to the last part of this theorem however, the local homology functors $\Lambda_{\fZ}$ are much more complicated than their affine analogues. In particular, they can be non-trivial both in positive and negative degrees, and the negative degree parts measure --- morally speaking --- the stackyness of $\fX$.

The next two sections contain a study of local dualities appearing in chromatic stable homotopy theory. While many specific instances of such results are well known, our approach emphasizes the uniform and conceptual nature of these phenomena. As a starting point, Section~\ref{sec:duality-spectra} is concerned with local duality in the category $\Sp$ of $p$-local spectra. In particular, we consider and compare three local duality contexts:
\begin{itemize}
 \item $(\Sp,F(n))$, for $F(n)$ a finite spectrum of type $n$.
 \item $(\Sp_{E(n)},L_nF(n))$, where $\Sp_{E(n)}$ is the category of $E(n)$-local spectra.
 \item $(\Mod_{E(n)}, E(n) \otimes F(n))$, where $\Mod_{E(n)}$ is the category of $E(n)$-module spectra. 
\end{itemize}
The various functors relating these local duality contexts are shown to be compatible with local cohomology. This allows us to generalize several of the structural results on localization from~\cite{hovey_morava_1999}. 

If $R$ is a ring spectrum, the associated homology theory $R_*$ naturally takes values in the category of comodules over the cooperations $R_*R$, i.e., we get a lift
\[
\xymatrix{& \Comod_{R_*R} \ar[d]^{\text{forget}} \\
\Sp \ar[r]_-{R_*} \ar@{-->}[ru]^{R_*} & \Mod_{R_*}.}
\]
In the case where $R = BP$ or $E(n)$ we study the resulting functors between the topological and algebraic local duality contexts. We show in Section~\ref{sec:duality-chromatic} how this gives a conceptual framework for the work of Hovey and Strickland \cite{hs_leht, hs_localcohom, hovey_chromatic}, and moreover provide some natural generalizations of their work. This structure is most naturally expressed in terms of plethories, which are an algebraic incarnation of the descent data attached to the inclusion of an open substack into $\mathcal{M}_{\mathrm{fg}}$, the stack of 1-dimensional formal groups. The main result of this section is the following transchromatic comparison result, which generalizes a theorem of Hovey~\cite{hovey_chromatic} for $k=n$. 
\begin{thm}
Let $k \le n$, $I_{k+1} =( p,v_1,\ldots,v_{k})$, and $\Stable_{E(n)_*E(n)}^{I_{k+1}-\mathrm{loc}}$ be the stable category of $I_{k+1}$-local $E(n)_*E(n)$-comodules, then there exists an equivalence 
\[
\Stable_{E(n)_*E(n)}^{I_{k+1}-\mathrm{loc}} \simeq \Stable_{E(k)_*E(k)}
\]
of symmetric monoidal stable categories.
\end{thm}
As an immediate application we obtain a new change of rings theorem, passing between $E$-theories of different heights. 

Section~\ref{sec:other} of the paper collects several other local duality contexts to which our theory applies, as for instance quasi-coherent sheaves on schemes, equivariant homotopy theory, and motivic homotopy theory. While we do not discuss these examples in all detail, we hope that they illustrate the flexibility of our methods and results. 

For the convenience of the reader, the final section displays various algebraic and topological local duality contexts and their relation in a compact form.

\subsection*{Comparison with the literature}

Local duality has undergone a significant development since Grothendieck's 1961 seminar. Rather than attempting to review this rich history, we would like to point out one of the key insights: The local duality spectral sequence, as well as its generalization \eqref{eqintro:gmss} due to Greenlees and May, are direct consequences of the equivalence of derived hom-objects:
\[
\mathbf{R}\Hom(\Gamma_IM,N) \simeq \mathbf{R}\Hom(M,\Lambda^I N).
\]
In other words, local cohomology $\Gamma_I$ is derived left adjoint to local homology $\Lambda^I$, a statement usually referred to as Greenlees--May duality; for simplicity, we shall say $\Gamma_I$ and $\Lambda^I$ satisfy local duality. This derived point of view was exploited in a series of papers by Alonso Tarr{\'i}o, Jerem{\'i}as L{\'o}pez, and Lipman, extending local duality to quasi-coherent sheaves on schemes~\cite{local_cohom_schemes}, and even formal schemes~\cite{ajl_formalschemes}. Moreover, in the affine situation this has helped  to relax and clarify the assumptions on the ring and the ideal $I$, see for example~\cite{gm_localhomology, schenzel_proreg, psy}. 

Our approach combines the abstract categorical framework of Hovey, Palmieri, and Strickland~\cite{hps_axiomatic} with the construction of torsion and completion functors for modules by Dwyer, Iyengar, and Greenlees~\cite{dwyer_complete_2002, dgi_duality,greenlees_axiomatic}.  Similar ideas have appeared in derived algebraic geometry~\cite{dag12} and very recently in equivariant stable homotopy theory~\cite{mnn_eqnilpotence}. 

The difference between our approach and the classical one becomes most transparent in the case of the derived category of modules over a commutative ring $A$. Under some mild conditions on $A$, Schenzel~\cite{schenzel_proreg} proved that classical local cohomology $\Gamma_I^{\text{cl}}$ and local homology $\Lambda^{I,\text{cl}}$, defined using the Koszul complex of the ideal $I \subseteq A$, satisfy local duality \emph{if and only if} $I$ is generated by a weakly proregular sequence. In contrast to this, we construct local cohomology functors $\Gamma_I$ and local homology functors $\Lambda^I$ on $\cD_A$ abstractly for any $I$, such that
\begin{enumerate}
	\item For all $I$, $\Gamma_I$ and $\Lambda^I$ satisfy local duality.
	\item If $I$ is generated by a weakly proregular sequence, then $\Gamma_I \simeq \Gamma_I^{\text{cl}}$ and hence $\Lambda^{I} \simeq \Lambda^{I,\text{cl}}$. 
\end{enumerate} 

As mentioned before, our results apply in particular to ind-coherent sheaves on a large class of algebraic stacks, yielding local cohomology and local homology functors, and a local duality theorem in this context. This question has been studied previously in unpublished work of Goerss~\cite{goerss_quasi-coherent_2008}; however, he uses a local-to-global argument and works in the derived category of quasi-coherent sheaves. Moreover, local cohomology for comodules over a Hopf algebroid or quasi-coherent sheaves on the associated stack has been carefully constructed and analysed in the recent PhD thesis by Tobias Sitte~\cite{sitte2014local}. 

The applications in Sections~\ref{sec:duality-spectra} and~\ref{sec:duality-chromatic} mostly recover or mildly extend the results contained in the tremendous body of work by Hovey and Strickland~\cite{hovey_morava_1999, hs_leht, hs_localcohom, hovey_chromatic}. We claim no novelty in this respect, but hope that our framework provides a conceptual and unifying perspective on this material, interpreting it as an avatar of local duality in chromatic homotopy theory.

\subsection*{Conventions and terminology}

In this paper, we work within a homotopically enriched categorical context, thereby avoiding some of the difficulties inherent in the theory of triangulated categories. There is a variety of different models available, like (pre-triangulated) dg-categories, (stable) model categories, or (stable) derivators, and we choose the quasi-categorical setting developed by Joyal~\cite{joyalqcat} and Lurie~\cite{htt,ha}. For simplicity, we will refer to a quasi-category as an $\infty$-category throughout this paper.  

Unless otherwise noted, all categorical constructions are implicitly considered derived. For example, the tensor product $\otimes$ usually refers to the derived tensor product, and limits and colimits mean homotopy limits and homotopy colimits, respectively. All limits and colimits in this paper are assumed to be small, i.e., indexed by a small category. Moreover, all filtered diagrams will be $\omega$-filtered, although most of the abstract theory can be developed for an arbitrary regular cardinal $\kappa$ as in~\cite{htt}. 

A presentable $\infty$-category $\cC$ is said to be generated by a collection of objects $\cG \subseteq \cC$ if the smallest localizing subcategory of $\cC$ containing $\cG$ is $\cC$. By the proof of \cite[Lem.~2.2.1]{schwedeshipley_modules}, if the elements of $\cG$ are compact, then $\cG$ generates $\cC$ if and only if $\cG$ detects equivalences in $\cC$. 

If $\cC$ is a closed symmetric monoidal stable $\infty$-category, the internal function object will be denoted by $\iHom_{\cC}$ to distinguish it from the merely spectrally enriched categorical mapping object $\Hom_{\cC}$. This is related to the usual mapping space via a natural weak equivalence $\Omega^\infty \Hom_{\cC}(X,Y) \simeq \Map_{\cC}(X,Y)$. If no confusion is likely to arise, the subscript $\cC$ will be omitted from the notation. 
\color{black}
Unless noted otherwise, a functor $F\colon \cC \to \cD$ between stable $\infty$-categories will be assumed to be exact, i.e., preserving finite limits and finite colimits. Similarly, an equivalence between symmetric monoidal categories is assumed to be symmetric monoidal. 

When dealing with chain complexes, we will always  employ homological grading, i.e., complexes are written as 
\[
\xymatrix{\ldots \ar[r]^-d & X_{1} \ar[r]^-d & X_0 \ar[r]^-d & X_{-1} \ar[r]^-d & \ldots}
\]
with the differential $d$ lowering degree by 1. As usual, taking cohomology of a chain complex $X$ reverses the sign of the homology, that is $H^*(X) = H_{-*}(X)$.

\subsection*{Acknowledgements}

This paper would not have been possible without the work of Hovey and Strickland, and their influence will be apparent throughout. Furthermore, we would like to thank Martin Frankland, Paul Goerss, John Greenlees, Mike Hopkins, Mark Hovey, Akhil Mathew, Peter May, Hal Sadofsky, Tomer Schlank, Chris Schommer-Pries, Tobias Sitte, Nat Stapleton, Craig Westerland, and Luke Wolcott for helpful discussions, Jay Shah for helpful comments on an earlier version of this paper, the referee for many useful suggestions and corrections, Andrew Blumberg for his tremendous support during the submission process, and the Max Planck Institute for Mathematics for its hospitality. The first-named author was partially supported by the DNRF92 and the European Unions Horizon 2020 research and innovation programme under the Marie Sklodowska-Curie grant agreement No.~751794, and the second-named author was partially supported by the SPP 1786.

\section{Abstract local duality}\label{sec:abstract}

The goal of this section is to formulate and prove our abstract local duality theorem, \Cref{thm:hps}, generalizing the approach taken by Hovey, Palmieri, and Strickland~\cite{hps_axiomatic} as well as Dwyer and Greenlees in \cite{dwyer_complete_2002}. To this end, we start by recalling some relevant background material from~\cite{hps_axiomatic}, \cite{htt}, and \cite{ha}. We then develop enough of the theory of ind-categories for our applications in later sections. The reader familiar with this material is invited to skip Sections \ref{sec:axiomatic} and \ref{sec:ind-categories}, and to return to them when needed. In \Cref{sec:ald}, we set up our general local duality framework and draw some easy consequences. As we will see, a few other duality results that have appeared previously follow formally. 

\subsection{Stable categories}\label{sec:axiomatic}

The axiomatization of the stable homotopy category, referred to as an axiomatic stable homotopy category, was introduced and studied extensively by Hovey, Palmieri, and Strickland~\cite{hps_axiomatic}. The authors work in the setting of triangulated categories; for our purposes we have to work with a homotopically enhanced model of axiomatic stable homotopy theory, based on the notion of quasi-category as developed by Joyal~\cite{joyalqcat} and Lurie~\cite{htt}; these will be referred to as $\infty$-categories throughout.

\begin{defn}
	A stable category $\cC = (\cC,\otimes,A)$ is a stable, presentable, symmetric monoidal $\infty$-category $(\mathcal{C},\otimes,A)$ with tensor unit $A$, such that $\otimes$ commutes with colimits separately in each variable. 
\end{defn}

Since the symmetric monoidal product of $\cC$ preserves colimits separately in each variable and $\cC$ is presentable, there exists an internal hom object $\iHom(Y,-)$ in $\cC$ right adjoint to $- \otimes Y$, i.e., for all $X,Y,Z \in \cC$ there is a natural equivalence
\[
\Hom(X\otimes Y, Z) \simeq \Hom(X, \iHom(Y,Z)).
\]
In other words, the symmetric monoidal structure on $\cC$ is closed. In particular, we get an internal duality functor $D(-) = \iHom(-,A)\colon \cC^{\mathrm{op}} \to \cC$. We record the following lemma, which will be used repeatedly in the rest of the paper. 

\begin{lem}\label{lem:homihom}
Suppose that $\cC$ is a stable category which is generated by a set of generators $\cG$. Given $X, X', Y,Y' \in \cC$ and a map $\phi\colon\iHom(X,Y) \to \iHom(X',Y')$, the following are equivalent:
\begin{enumerate}
	\item $\phi$ is an equivalence $\iHom(X,Y) \simeq \iHom(X',Y')$.
	\item $\phi$ induces equivalences $\Hom(G \otimes X,Y) \simeq \Hom(G \otimes X',Y')$ for all $G \in \cG$. 
\end{enumerate}
\end{lem}
\begin{proof}
Statement (1) implies (2) by applying $\Hom(G,-)$ and using the adjunction. Conversely, (1) can be deduced from (2) by using the adjunction and the fact that $\cG$ detects equivalences in $\cC$ by \cite[Lem.~1.4.5(b)]{hps_axiomatic}. 
\end{proof}

\begin{defn}
Let $X$ be an object of $\cC$. If the functor $\Hom(X,-)$ corepresented by $X$ preserves filtered colimits, then $X$ is called compact, and the full subcategory of $\cC$ on the compact objects is denoted by $\cC^{\omega}$.  
If the natural map $DX \otimes Y \to \iHom(X,Y)$ is an equivalence for all $Y \in \cC$, then $X$ is said to be dualizable.
\end{defn}

As shown in~\cite[Cor.~1.4.4.2]{ha}, a stable category $\cC$ admits a $\kappa$-compact generator $G$ for some regular cardinal $\kappa$. If $\cC$ admits a set of $\omega$-compact generators, we say that $\cC$ is compactly generated. 

\begin{rem}
Let $\cC$ be a stable category which is compactly generated by a collection $\cG$ of dualizable objects. Then by~\cite[Thm.~1.1.2.15]{ha} and \cite[Thm.~2.3.2]{hps_axiomatic}, the homotopy category $\Ho(\cC)$ is an algebraic stable homotopy category in the sense of~\cite{hps_axiomatic}. Many of the results proven there carry over to the setting of stable categories, as demonstrated in \cite{ha}.
\end{rem}

\begin{lem}\label{lem:algcompdual}
Suppose $\cC$ is a stable category, compactly generated by dualizable objects. If $X \in \cC$ is compact, then $X$ is dualizable. Moreover, if $A$ is compact, then the converse holds.
\end{lem}
\begin{proof}
This is an easy consequence of the definitions. For a proof in the setting of axiomatic stable homotopy theory, see \cite[Thm.~2.1.3(c),(d)]{hps_axiomatic}.
\end{proof}
\begin{rem}
	We note that this need not be true without the assumption that $\cC$ is compactly generated by dualizable objects. A counterexample arises in the stable motivic homotopy category $\Sp(S)$, where $S$ is the spectrum of a discrete valuation ring, see \cite[Rem.~8.2]{mot_le}. However, this condition is satisfied for all the examples we consider in this paper. 
\end{rem}
\begin{defn}
A full subcategory $\cD \subseteq \cC$ is called thick if it is closed under retracts, desuspensions, and finite colimits in $\cC$. A localizing subcategory is a thick subcategory that is also closed under filtered colimits in $\cC$. Finally, a localizing subcategory is said to be a localizing ideal if it is also closed under tensor products with objects in $\cC$. 
\end{defn}

Given a collection $\cS \subseteq \cC$ of objects in $\cC$, the smallest thick subcategory of $\cC$ containing $\cS$ will be denoted by $\Thick_{\cC}(\cS)$ and is called the thick subcategory of $\cC$ generated by $\cS$. The localizing subcategory $\Loc_{\cC}(\cS)$ is similarly defined as the smallest localizing subcategory of $\cC$ containing $\cS$. Likewise, we define the localizing ideal generated by $\cS$, $\Locid{\cC}(\cS)$, to be the smallest localizing ideal containing $\cS$. If the ambient category $\cC$ is clear from context, then the subscript will be omitted.
\color{black}

\subsection{Ind-categories}\label{sec:ind-categories}

In this section, we recall the definition and the salient features of the ind-category $\Ind(\cC)$ associated to a small $\infty$-category $\cC$, and refer to~\cite{htt} and~\cite{ha} for the details. 

Roughly speaking, the ind-category on $\cC$ is the smallest $\infty$-category closed under filtered colimits and containing $\cC$. Its objects can be described as formal filtered colimits of objects in $\cC$. If $F=\colim_i F(i), G=\colim_j G(j) \in \Ind(\cC)$ are two such objects, then the space of morphisms between them is defined as
\[ 
\Hom_{\Ind(\cC)}(F,G) = \lim_i \colim_j \Hom_{\cC}(F(i),G(j)).
\]

By virtue of~\cite[Cor.~5.3.5.4]{htt}, there is the following explicit model for $\Ind(\cC)$.
\begin{defn}
Let $\cC$ be a small $\infty$-category and denote by $\cP(\cC) = \Fun(\cC^{\mathrm{op}},\cS)$ the $\infty$-category of space-valued presheaves on $\cC$. The ind-category $\Ind(\cC)$ of $\cC$ is defined as the full subcategory of $\cP(\cC)$ on those objects which are filtered colimits of representable ones. 
\end{defn}

\begin{rem}
If $\cC$ admits finite colimits, then $\Ind(\cC)$ can be equivalently described as those presheaves which preserve finite limits, the so-called flat functors. 
\end{rem}

The construction of $\Ind(\cC)$ comes with a (restricted) Yoneda embedding
\[ 
j\colon \cC \to \Ind(\cC)
\]
which preserves all colimits that exist in $\cC$ and has image in the compact objects of $\Ind(\cC)$, see~\cite[Prop.~5.3.5.5, Prop.~5.3.5.14]{htt}. Note that, if $\cC$ is idempotent complete, then $\cC = \Ind(\cC)^{\omega}$ via the Yoneda embedding. The ind-category has the following universal property~\cite[Prop.~5.3.5.10]{htt}, which essentially characterizes it, together with the property that it admits filtered colimits.

\begin{prop}\label{prop:induniversal}
Let $\cC$ and $\cD$ be $\infty$-categories and assume that $\cC$ is small and that $\cD$ admits filtered colimits. The Yoneda embedding $j$ then induces an equivalence
\[
\xymatrix{\Fun^{\mathrm{cont}}(\Ind(\cC),\cD) \ar[r]^-{\sim} & \Fun(\cC,\cD)}
\]
of $\infty$-categories, where $\Fun^{\mathrm{cont}}$ is the $\infty$-category of continuous functors, i.e., functors which preserve filtered colimits. 
\end{prop}

The construction of the ind-category behaves well with respect to symmetric monoidal structures, as shown in~\cite[Cor.~4.8.1.13]{ha}.

\begin{prop}\label{prop:indsymmmon}
If $\cC$ is a small symmetric monoidal $\infty$-category, then there exists a symmetric monoidal structure $\otimes$ on $\Ind(\cC)$ preserving filtered colimits separately in each variable and such that the Yoneda embedding $j$ is symmetric monoidal. Moreover, if $X \in \cC$ is a dualizable object, then $jX \in \Ind(\cC)$ is also dualizable.

Suppose $\cD$ is a symmetric monoidal $\infty$-category closed under filtered colimits. If the symmetric monoidal structure on $\cD$ preserves filtered colimits separately in each variable, then $j$ induces an equivalence
\[
\xymatrix{\Fun^{\otimes}(\Ind(\cC),\cD) \ar[r]^-{\sim} & \Fun(\cC,\cD)}
\]
where $\Fun^{\otimes}$ denotes the $\infty$-category of symmetric monoidal continuous functors from $\cC$ to $\cD$.  
\end{prop}

Suppose $\cD$ is an $\infty$-category admitting filtered colimits and $f\colon \cC \to \cD$ is a functor, then the universal property of $\Ind(\cC)$ gives a functor $F\colon \Ind(\cC) \to \cD$ and a commutative diagram 
\[ 
\xymatrix{\cC  \ar[r]^-j \ar[rd]_f & \Ind(\cC) \ar[d]^F \\
& \cD}
\]
of $\infty$-categories. 
By~\cite[Prop.~5.3.5.11]{htt}, the induced functor $F$ is an equivalence if and only if the following three conditions are satisfied:
\begin{enumerate}
 \item $f$ is fully faithful.
 \item $f$ factors through the full subcategory of compact objects of $\cD$. 
 \item The essential image of $f$ generates $\cD$ under filtered colimits. 
\end{enumerate}

Following~\cite[Prop.~5.3.2.9]{htt}, we say that a functor $f\colon \cC \to \cC'$ between $\infty$-categories admitting finite colimits is right exact if it preserves all finite colimits. The construction of the ind-category can be extended to an endofunctor on the $\infty$-category of $\infty$-categories, and we write 
\[ 
\Ind(f)\colon \Ind(\cC) \to \Ind(\cC')
\]
for the image of $f$ under this functor. In~\cite[Prop.~5.3.5.13]{htt}, Lurie proves a criterion for the existence of a right adjoint to $\Ind(f)$.

\begin{prop}\label{prop:indrightadjoint}
Suppose $f\colon  \cC \to \cC'$ is a functor between small $\infty$-categories admitting finite colimits, then the following are equivalent:
\begin{enumerate}
 \item $f$ is right exact.
 \item $\Ind(f)$ admits a right adjoint.
\end{enumerate}
Moreover, if these conditions hold, then the right adjoint to $\Ind(f)$ is given by  restricting the functor $-\circ f\colon \cP(\cC') \to \cP(\cC)$ to the corresponding ind-categories. 
\end{prop}

The ind-category $\Ind(\cC)$ inherits many pleasant properties from $\cC$; the next result summarizes the ones that we will need later in the paper. 

\begin{thm}[Lurie]\label{prop:indproperties}
Let $\cC$ be a small stable $\infty$-category with a closed symmetric monoidal structure $\otimes$, and assume that $\otimes$ preserves finite colimits separately in each variable. The ind-category $\Ind(\cC)$ on $\cC$ is then a compactly generated stable category. In other words, $\Ind(\cC)$ is a compactly generated stable $\infty$-category which has a closed symmetric monoidal structure $\otimes$ preserving colimits separately in each variable, making the Yoneda embedding $j$ into a symmetric monoidal functor. \end{thm}
\begin{proof}
This is a combination of \cite[Thm.~5.5.1.1]{htt}, \cite[Prop.~1.1.3.6]{ha}, and \cite[Cor. 4.8.1.13]{ha}. Since $\Ind(\cC)$ is presentable, and $\otimes$ preserves colimits separately in each variable, the symmetric monoidal structure is closed by the adjoint functor theorem~\cite[Cor.~5.5.2.9]{htt}. 
\end{proof} 

Ind-categories will provide us with a sufficient supply of compactly generated categories. Given a stable category $(\cD,\otimes,A)$, let $\cG_{\cD}$ denote a set of representatives for the dualizable objects in $\cD$. Then, we can use \Cref{prop:indproperties} to produce a new stable category $(\cC,\otimes,A)$ as
\[
\cC = \Ind(\Thick_{\cD}(\cG_{\cD})).
\]
Note that $\Thick_{\cD}(\cG_{\cD})$ is precisely the full subcategory of dualizable objects in $\cD$. By construction, $\cC$ is compactly generated by $\cG_{\cD}$, and in many examples turns out to be a good approximation to $\cD$, see \cite[Sec.~1.3.6]{ha}. We will use this idea in \Cref{sec:comodules}.

\begin{rem}
If the elements of $\cG_{\cD}$ are a set of compact generators of $\cD$, then $\cC \simeq \cD$ naturally.
\end{rem}

We finish this subsection with a result that will be useful later.

\begin{lem}\label{lem:locindthick}
Suppose $(\cD,\otimes,A)$ is a stable category and let $\cC = \Ind(\Thick_{\cD}(\cG))$ be the stable presentable $\infty$-category compactly generated by some set $\cG \subseteq  \cD$. If $\cK \subseteq \Thick_{\cD}(\cG)$ is a collection of objects, then there exists a canonical equivalence of $\infty$-categories $\Ind(\Thick_{\cD}(\cK)) \simeq \Loc_{\cC}(\cK)$, and a natural adjunction
\[
\xymatrix{F\colon \Loc_{\cC}(\cK) \ar@<0.5ex>[r] & \cC \colon G \ar@<0.5ex>[l]}
\]
with $F$ the obvious inclusion.  
\end{lem}
\begin{proof}
Since $\cK \subseteq \Thick_{\cD}(\cG)$, there is a fully faithful functor $f\colon \Thick_{\cD}(\cK) \to \Thick_{\cD}(\cG)$ inducing a commutative diagram of $\infty$-categories:
\[
\xymatrix{\Thick_{\cD}(\cK) \ar[r]^-j \ar[d]_f & \Ind(\Thick_{\cD}(\cK)) \ar@{-->}[d]^{F} \\
\Thick_{\cD}(\cG) \ar[r]_-j & \cC}
\]
with $F \simeq \Ind(f)$. The composite $j \circ f$ is clearly right exact, so~\Cref{prop:indrightadjoint} applies to give a right adjoint $G\colon \cC \to \Ind(\Thick_{\cD}(\cK))$ to $F$. 

It remains to identify $\Ind(\Thick_{\cD}(\cK))$ with the localizing subcategory of $\cC$ generated by $\cK$. The universal property of $\Ind(\Thick_{\cD}(\cK))$ induces a commutative diagram
\[
\xymatrix{\Thick_{\cD}(\cK) \ar[r]^-j \ar[rd]_-{h} & \Ind(\Thick_{\cD}(\cK)) \ar@{-->}[d]^-{H} \\
& \Loc_{\cC}(\cK),}
\]
where $h$ is the inclusion. Since $h$ is fully faithful and $\cK$ is a collection of compact generators for $\Loc_{\cC}(\cK)$, $H$ is an equivalence.
\end{proof}

\subsection{Abstract local duality}\label{sec:ald}

In this section, let $\cC = (\cC,\otimes,A)$ be a stable category compactly generated by dualizable objects. The assumption of compact generation could be weakened, but all examples we will consider in this paper satisfy it, so there is not much loss in generality. To start with, we recall the notion of left-orthogonal to a subcategory. 

\begin{defn}\label{def:leftorthogonal}
Suppose $\cD$ is a full subcategory of $\cC$, then the left-orthogonal $\cD^{\perp}$ of $\cD$ in $\cC$ is the full subcategory of $\cC$ on those objects $Y \in \cC$ such that $\Hom(X,Y) \simeq 0$
for all $X \in \cD$. 
\end{defn}

The following construction gives an abstract version of Miller's finite localization functors \cite{miller_finiteloc}.

\begin{lem}\label{lem:leftorthogonal}
If $\cD$ is a localizing subcategory of $\cC$, then its left orthogonal $\cD^{\perp}$ is also a stable category. If $\cD$ is additionally generated by a set of compact objects in $\cC$, then $\cD^{\perp}$ is a compactly generated localizing subcategory as well.  
\end{lem}
\begin{proof}
Since $\cC$ is stable and $\cD$ is localizing, \cite[Prop.~1.4.4.11]{ha} implies the existence of an accessible $t$-structure $(\cC_{< 0},\cC_{\ge 0})$ on $\cC$ with $\cC_{\ge 0} = \cD$ and $\cC_{< 0} = \cD^{\perp}$. In particular, $\cD^{\perp}$ is presentable by \cite[Prop.~1.4.4.13]{ha}. 

In order to show that $\cD^{\perp}$ is a localizing subcategory of $\cC$ it suffices to prove that the inclusion $\cD^{\perp} \to \cC$ preserves filtered colimits. The fiber sequence of endofunctors of $\cC$
\[
\xymatrix{\tau_{\ge 0} \ar[r] & \Id \ar[r] & \tau_{< 0}}
\]
reduces the claim to showing that $\tau_{\ge 0}$ preserves filtered colimits. By \cite[Prop.~5.5.7.2]{htt}, this is equivalent to the claim that the inclusion  $\cD \to \cC$ preserves compact objects, which holds by assumption. 
\end{proof}

\begin{rem}\label{rem:perpgen}
In fact, this proof shows that the images under $\tau_{<0}$ of the compact generators of $\cC$ form a set of compact generators for $\cD^{\perp}$.
\end{rem}

We are now ready to describe the categorical framework in which abstract local duality holds. 

\begin{defn}
If $\cC$ is a stable category, compactly generated by dualizable objects, and $\cK$ is a set of compact objects in $\cC$, then the pair $(\cC,\cK)$ is called a local duality context. If $\cK = \{ K \}$ consists of a single object, we simply write $(\cC,K)$ for $(\cC,\cK)$.
\end{defn}

Let $(\cC,\cK)$ be a local duality context. We define $\cC^{\cK-\text{tors}}$ as the localizing ideal in $\cC$ generated by $\cK$. Denote the left-orthogonal of $\cC^{\cK-\text{tors}}$ by $\cC^{\cK-\text{loc}}$, which in turn has a left-orthogonal called $\cC^{\cK-\text{comp}}$. If $\cK$ is clear from context, it will be omitted from the notation, and we will refer to $\cC^{\text{tors}}$, $\cC^{\text{loc}}$, and $\cC^{\text{comp}}$ as the category of torsion, local, and complete objects, respectively. Moreover, let $\iota_{\text{tors}}\colon \cC^{\text{tors}} \to \cC$ be the inclusion functor, and similarly for $\iota_{\text{loc}}$ and $\iota_{\text{comp}}$. By the previous \Cref{lem:leftorthogonal}, these categories inherit the structure of stable categories. \Cref{lem:locindthick} and the adjoint functor theorem~\cite[Cor.~5.5.2.9]{htt} then yield the first part of our abstract local duality theorem, which is an $\infty$-categorical version of \cite[Thm.~3.3.5]{hps_axiomatic}.

\begin{rem}\label{loc=locid}
	Note that if $\cC$ is generated by the tensor unit and $\cK$ is any class of objects, then $\Locid{\cC}(\cK)=\Loc_{\cC}(\cK)$. In particular, $\cC^{\cK-\text{tors}}=\Loc_{\cC}(\cK)$ under these conditions.
\end{rem}
\color{black}

\begin{thm}[Abstract local duality]\label{thm:hps}
Let $(\cC,\otimes,A)$ be a stable category compactly generated by dualizable objects and let $(\cC,\cK)$ be a local duality context.
\begin{enumerate}
 \item The functor $\iota_{\mathrm{tors}}$ has a right adjoint $\Gamma = \Gamma_{\cK}$, and the functors $\iota_{\mathrm{loc}}$ and $\iota_{\mathrm{comp}}$ have left adjoints $L = L_{\cK}$ and $\Lambda = \Lambda_{\cK}$, respectively, 
 which induce natural cofiber sequences
 \[
 \Gamma X \longrightarrow X \longrightarrow LX 
 \]
 and 
 \[
 \Delta(X) \longrightarrow X \longrightarrow \Lambda X
 \]
 for all $X \in \cC$. In particular, $\Gamma$ is a colocalization functor and both $L$ and $\Lambda$ are localization functors. 
 
 \item Both $\Gamma \colon \cC^{} \to \cC^{\mathrm{tors}}$ and $L\colon \cC^{} \to \cC^{\mathrm{loc}}$ are smashing, i.e., $\Gamma(X) \simeq X \otimes \Gamma(A)$ for all $X$ and similarly for $L$. Moreover, $L\colon \cC^{} \to \cC^{\mathrm{loc}}$ preserves compact objects. 
 
 \item The functors $\Lambda \iota_{\mathrm{tors}}\colon \cC^{\mathrm{tors}} \to \cC^{\mathrm{comp}}$ and $\Gamma \iota_{\mathrm{comp}}\colon \cC^{\mathrm{comp}} \to \cC^{\mathrm{tors}}$ are mutually inverse equivalences of stable categories. Moreover, there are natural equivalences of functors
 \[
 \xymatrix{\Lambda \Gamma \ar[r]^-{\sim} & \Lambda & \Gamma \ar[r]^-{\sim} & \Gamma \Lambda.}
 \]
 \item When viewed as endofunctors on $\cC$ via the inclusions, the functors $(\Gamma,\Lambda)$ form an adjoint pair, so that we have a natural equivalence 
\[\xymatrix{\iExt(\Gamma X,Y) \simeq \iExt(X,\Lambda Y)}\]
for all $X,Y \in \cC$. In particular, $\iHom(\Gamma A,Y) \simeq \Lambda Y$ for all $Y \in \cC$. Similarly, $L$ is left adjoint to $\Delta$. 
\end{enumerate}
\end{thm}
\begin{proof}

Let $\mathcal{A}$ denote the closure of $\cK$ under tensoring with the generators of $\mathcal{C}$. By assumption, $\mathcal{C}$ is compactly generated by dualizable objects. Therefore \cite[Thm.~3.3.5]{hps_axiomatic} applies and yields the first three parts of the theorem. From this, we then formally deduce our abstract version of local duality in (4). Indeed, using \Cref{lem:homihom}, there is the following binatural string of equivalences:
\color{black}
\begin{align*}
\iExt(\Gamma X,Y) & \simeq \iExt(\Lambda \Gamma X,\Lambda Y) \\ 
& \simeq \iExt(\Lambda X,\Lambda Y) \\ 
& \simeq \iExt(X,\Lambda Y), 
\end{align*}
by (3) and (1). Using the cofiber sequences of (1), the claim about the pair $(L,\Delta)$ follows formally from this.
\end{proof}

\begin{rem}
	It is worth noting that the functors produced by the theorem above only depend on the category $\cC^{\mathrm{tors}}$ rather than on any particular set of generators $\cK$. In practice, the category of torsion objects is often generated by a set objects that is natural to the context one is working in, which justifies our choice of notation in \cref{thm:hps}.
\end{rem}
\color{black}

\begin{rem}
In view of \Cref{lem:homihom}, Part (4) of this theorem can be stated equivalently  as $\Hom(\Gamma X,Y) \simeq \Hom(X,\Lambda Y)$ for all $X,Y \in \cC$, using that $\Gamma$ is smashing.
\end{rem}

\begin{defn}
The functor $\Gamma$ is called the local cohomology or torsion functor (with respect to $\cK$), and $\Lambda$ will be referred to as the local homology or completion functor (with respect to $\cK$).
\end{defn}

The situation of the theorem can be summarized in the following diagram of adjoints,
\begin{equation}\label{eq:abstractdiagram}
\begin{gathered}
\xymatrix{& \cC^{\text{loc}} \ar@<0.5ex>[d] \ar@{-->}@/^1.5pc/[ddr] \\
& \cC \ar@<0.5ex>[u]^{L} \ar@<0.5ex>[ld]^{\Gamma} \ar@<0.5ex>[rd]^{\Lambda} \\
\cC^{\text{tors}} \ar@<0.5ex>[ru] \ar[rr]_{\sim} \ar@{-->}@/^1.5pc/[ruu] & & \cC^{\text{comp}}, \ar@<0.5ex>[lu]}
\end{gathered}
\end{equation}
where the dotted arrows indicate left orthogonality. The two cofiber sequences of \Cref{thm:hps}(1) are related formally through a fracture square, generalizing the various well-known Hasse squares in stable homotopy theory; for a proof in this generality, see~\cite[Cor.~2.4]{greenlees_axiomatic}. 

\begin{cor}\label{cor:fracturesquare}
In the situation of the above theorem, there is a pullback square of functors
\[\xymatrix{\mathrm{Id} \ar[r] \ar[d] & \Lambda \ar[d] \\
L \ar[r] & L\Lambda,}\]
whose horizontal and vertical fibers are $\Delta$ and $\Gamma$, respectively.
\end{cor}

\begin{exmp}
Let $\cD_{\Z}$ be the derived category of abelian groups and consider the full subcategory $\cD_{\Z}^{\mathrm{tors}} = \Loc(\{\Z/p\}_p)$ of torsion abelian groups. For any $M \in \cD_{\Z}$, the pullback square of \Cref{cor:fracturesquare} can then be identified with the well-known fracture square
\[
\xymatrix{M \ar[r] \ar[d] & \prod_{p}M_p \ar[d] \\
\Q \otimes M \ar[r] & \Q \otimes \prod_{p}M_p,}
\]
where $M_p$ denotes the $p$-completion of $M$ and $\Q \otimes \prod_{p}M_p$ are by definition the finite adeles of $M$. 
\end{exmp}

\begin{cor}
With notation as in \Cref{thm:hps}, there is a pullback square of $\infty$-categories
\[
\xymatrix{\cC \ar[r]^-{\Lambda} \ar[d] & \cC^{\mathrm{comp}} \ar[d]^{L} \\
\Fun(\Delta^1,\cC^{\mathrm{loc}}) \ar[r]_-{p_1} & \cC^{\mathrm{loc}},}
\]
with $p_1$ denoting the evaluation at $1 \in \Delta^1$. The left vertical map sends an object $X \in \cC$ to the canonical morphism $LX \to L\Lambda X$.
\end{cor}
\begin{proof}
The proof follows formally from \Cref{cor:fracturesquare}; for the details of this argument see \cite[Thm.~5.5]{chromaticfracturecubes} or \cite[Appx.~A.8]{ha}. 
\end{proof}
\color{black}

\begin{rem}
The Tate construction $t_{\cK} = L\Lambda$ associated to the local duality context $(\cC,\cK)$ is defined as the cofiber of the natural map 
\[
\Gamma \simeq \Gamma \Lambda \longrightarrow \Lambda.
\]
This definition agrees with the construction of axiomatic Tate cohomology given in~\cite{greenlees_axiomatic}, as there is a natural equivalence of cofiber sequences
\[\xymatrix{\iExt(\Gamma A,X) \otimes \Gamma A \ar[r] \ar[d]_{\sim} & \iExt(\Gamma A,X) \ar[r] \ar[d]_{\sim} & \iExt(\Gamma A,X) \otimes LA \ar[d]^{\sim} \\
\Gamma \Lambda X \ar[r] & \Lambda X \ar[r] & t_{\cK} X,}\]
where the top row is obtained from the first cofiber sequence in \Cref{thm:hps}(1) by tensoring with $\iExt(\Gamma A,X)$. 

Moreover, Warwick duality~\cite[Cor.~2.5]{greenlees_axiomatic} gives an alternative description of the Tate construction as
\[
t_{\cK}X \simeq L\Lambda X \simeq \Sigma \Delta\Gamma X
\]
for all $X \in \cC$. 
\color{black}
\end{rem}

It is often useful to have a more explicit description of the functors $\Gamma$ and $\Lambda$. To this end, note that~\Cref{lem:locindthick} provides an equivalence $\cC^{\text{tors}} = \Loc_{\cC}^{\otimes}(\cK) \simeq \Ind(\Thick_{\cC}(\overline  \cK))$, where $\overline{\cK}=\{G\otimes K \mid G\in\cG, K\in\cK\}$ and $\cG$ is a set of generators for $\cC$. Since $\Gamma$ is smashing, this implies that there exists a filtered diagram $J$ and a system $(\Kos_j)_{j \in J} \in (\Thick_{\cC}(\overline \cK))^J$ so that 
\begin{equation}\label{eq:torsionkoszul}
\Gamma(X) \simeq \colim_{J} \Kos_j \otimes X,
\end{equation}
for all $X \in \cC$. The diagram $(\Kos_j)_{j \in J}$ will be referred to as a Koszul system; of course, this system is far from being uniquely determined by $\overline \cK$. In many situations, there exist particularly well-behaved Koszul systems which allow for a more explicit description of the completion functor $\Lambda$. 

\begin{defn}
If $(\Kos_j)_{j \in J}$ is a diagram satisfying \eqref{eq:torsionkoszul} such that $\Kos_j$ is self-dual with a fixed shift, i.e., if there exists $n$ such that for all $j \in J$
\[
D\Kos_j \simeq \Sigma^n\Kos_j,
\]
then $(\Kos_j)_{j \in J}$ is called strongly Koszul. 
\end{defn}

Examples of diagrams which are strongly Koszul abound, for example, see \Cref{prop:torsionfunctorspectra}. In this case, the formula for the completion functor reads:

\begin{cor}\label{cor:completion}
If $(\Kos_j)_{j \in J}$ is strongly Koszul for some integer $n$, then 
\[
\Lambda(-) \simeq \lim_J \Sigma^n \Kos_j \otimes -,
\]
where we indentify the system $(\Sigma^n \Kos_j)_{j\in J}$ with the dual of the Koszul system.
\end{cor}
\begin{proof}
Because $\Kos_j \in \Thick_{\cC}(\cK)$ and $\cK \subseteq \cC^{\omega}$, $\Kos_j$ is compact in $\cC$ for all $j$, so the result follows from abstract local duality,
\begin{align*}
\Lambda Y & \simeq \iExt(\Gamma A,Y) \\
& \simeq \iExt(\colim_J \Kos_j,Y) \\
& \simeq \lim_J\iExt(\Kos_j,Y) \\
& \simeq \lim_J D\Kos_j \otimes Y \\
& \simeq \lim_J\Sigma^n\Kos_j \otimes Y,
\end{align*}
where the first isomorphism uses \Cref{thm:hps}(4). 
\end{proof}

Finally, we give a convenient characterization of the local cohomology functor $\Gamma$, which is often useful for identifying it explicitly.

\begin{lem}\label{lem:torsionred}
A smashing functor $F:\cC \to \cC$ with essential image in $\cC^{\mathrm{tors}}$ together with a natural transformation $\phi\colon F \to \Id$ is naturally equivalent to $\Gamma$ if and only if $\phi$ induces equivalences
\[
F(A) \otimes K \simeq K
\]
for all $K \in \cK$.
\end{lem}
\begin{proof}
Since $F$ and $\Gamma$ are smashing, we have $F\Gamma \simeq \Gamma F$, which in turn is equivalent to $F$ by assumption on the essential image of $F$. Suppose that $\phi$ induces equivalences $F(A) \otimes K \simeq K$ for all $K \in \cK$, then also $F\Gamma \simeq \Gamma$, so we obtain a natural equivalence $F \simeq \Gamma$.

Conversely, if $F$ is equivalent to $\Gamma$, then the claim follows from \Cref{thm:hps}.
\end{proof}

We finish this section with a different characterization of the local homology functor in the case $\cC^{\mathrm{tors}}$ is generated by a single compact object $K$. 

\begin{prop}\label{prop:lochombl}
Let $(\cC,K)$ be a local duality context, then the local homology functor $\Lambda = \Lambda^K$ is equivalent to the Bousfield localization\footnote{This functor should not be confused with $L=L_{\mathcal{K}}\colon \cC^{\mathrm{loc}} \to \cC$.} $L_K$ at $K$, i.e., there is a natural equivalence of functors
\[
\xymatrix{L_{K} \ar[r]^-{\sim} & \Lambda^{K}.}
\]
\end{prop}
\begin{proof}
For $X \in \cC$ we have to show that $\Lambda^{K}X$ is $K$-local and that the natural map $\eta\colon X \to \Lambda^{K}X$ is a $K$-equivalence. To see the first claim, suppose $T \in \cC^{\mathrm{loc}}$, i.e., $DK \otimes T \simeq \iHom(K,T) \simeq 0$; then 
\[
\Hom(T,\Lambda X) \simeq \Hom(T,\iHom(\Gamma A,X)) 
\simeq \Hom(\Gamma T,X) \simeq 0, 
\]
so $\Lambda X$ is $K$-local. Since $DK \otimes LA \simeq L(DK) =0$, we get $K \otimes \iHom(LA,X) \simeq \iHom(L(DK),X) \simeq 0$. Therefore, it follows from the fiber sequence
\[
\xymatrix{\iHom(LA,X) \ar[r] & X \ar[r]^-{\eta} & \iHom(\Gamma A,X)\simeq \Lambda X}
\]
that $K \otimes X \simeq K \otimes \iHom(\Gamma A,X) \simeq K \otimes \Lambda X$, hence $K\otimes \eta$ is an equivalence. 
\end{proof}

\subsection{Other duality theorems}

In this section we give a few other abstract duality theorems that follow formally from our framework. We use the same notation as in the previous section. 

In~\cite{dwyer_complete_2002}, Dwyer and Greenlees refine \Cref{thm:hps}(3) by using Morita theory to construct an intermediate category between $\cC^{\text{tors}}$ and $\cC^{\text{comp}}$, at least in the case $\cC$ is compactly generated by its tensor unit $A$. In order to state their result, let $(\cC,\cK)$ be a local duality context with $\cK = \{K\}$ and set $\cE = \End_{\cC}(K)$.

Since $\cC$ is stable, it is enriched, tensored, and cotensored over the stable category of spectra $\Sp$. To distinguish this structure from the closed symmetric monoidal structure on $\cC$, we denote the tensor by $M \wedge C$ and the cotensor by $C^M$, where $M\in \Sp$ and $C \in \cC$. Moreover, there is a natural $\cE$-action on $K$, so $E(-)=\Hom_{\cC}(K,-)$ lifts to a functor with values in $\Mod_{\cE}$, the stable category of left $\cE$-modules in spectra. By the adjoint functor theorem, $E$ admits a left adjoint $\Gamma'(-)  = (-) \wedge_{\cE} K\colon \Mod_{\cE} \to \cC^{\text{tors}}$, constructed from $- \wedge K$ by taking the $\cE$-action into account; for the details, see \cite{schwedeshipley_modules} and \cite{dwyer_complete_2002}. Similarly, there is a right adjoint $\Lambda'(-) = \Hom_{\cE}(DK, -)$ to $E(-)\colon \cC^{\text{tors}} \to \Mod_{\cE}$.

\begin{thm}[Dwyer--Greenlees]
Let $(\cC,K)$ be a local duality context and $\cE = \End_{\cC}(K)$. There is a commutative diagram of adjunctions
\[
\xymatrix{& \cC \ar@<-0.5ex>[ld]_{\Gamma} \ar@<0.5ex>[rd]^{\Lambda} \ar[d]^E \\
\cC^{\mathrm{tors}} \ar@<-0.5ex>[ru] \ar@<-0.5ex>[r]_E & \Mod_{\cE} \ar@<-0.5ex>[r]_{\Gamma'} \ar@<-0.5ex>[l]_{\Lambda'} & \cC^{\mathrm{comp}}, \ar@<0.5ex>[lu] \ar@<-0.5ex>[l]_E}
\]
where all horizontal functors are equivalences, and are given as follows:
\[\xymatrix{E(X) = \Hom_{\cC}(K,X) & \Gamma'(M)  = M \wedge_{\cE} K & \Lambda'(M) = \Hom_{\cE}(DK, M),}\]
equipped with the obvious module structures.
\end{thm}
\begin{proof}
The proof is a straightforward adaptation of the argument in~\cite{dwyer_complete_2002}. An $\infty$-categorical version of the derived Morita theory of Schwede and Shipley~\cite{schwedeshipley_modules} can be found in \cite[Thm.~7.1.2.1, Rem.~7.1.2.3]{ha}.
\end{proof}

\begin{rem}
Using the techniques of~\cite{schwedeshipley_modules}, it is possible to generalize the above result to cover the case where $\cK$ is a set of compact objects of $\cC$ as well. 
\end{rem}

In \cite{hartshorne_affine_1969}, Hartshorne shows that the classical local duality theorems of Grothendieck \cite{grothendieck_localcohomology} can be lifted to the derived category. It turns out that the arguments are completely formal; in the following, we present a generalization of it to any local duality context $(\cC,\cK)$. We start with a preliminary lemma. 
\begin{lem}\label{lem:torscomplihom}
Let $\Gamma$ and $\Lambda$ be the torsion and completion functor of some fixed local duality context $(\cC,{\cK})$. If $X,Y \in \cC$, then there are natural equivalences:
\begin{enumerate}
 \item $\Lambda\iHom(X,Y) \simeq \iHom(X,\Lambda Y)$.
 \item $\Gamma\iHom(X,Y) \simeq \iHom(X,\Gamma Y)$, if $X$ is compact. 
\end{enumerate}
\end{lem}
\begin{proof}
By local duality and \Cref{lem:homihom}, we have equivalences
\begin{align*}
\Hom(W,\Lambda\iHom(X,Y)) & \simeq \Hom(\Gamma W, \iHom(X,Y)) \\
& \simeq \Hom(\Gamma (W \otimes X),Y) \\
& \simeq \Hom(W \otimes X, \Lambda Y) \\
& \simeq \Hom(W,\iHom(X,\Lambda Y)),
\end{align*}
for any $W \in \cC$, hence $\Lambda\iHom(X,Y) \simeq \iHom(Y,\Lambda Y)$. For the second claim, the assumption on $X$ and the fact that $\Gamma$ is smashing give rise to equivalences
\[
\Gamma\iHom(X,Y) \simeq \Gamma (DX \otimes Y) \simeq DX \otimes \Gamma Y \simeq \iHom(X,\Gamma Y). 
\]
\end{proof}

Fix an object $\Omega \in \cC$, thought of as a dualizing complex. We then define the duality functor $D_{\Omega}$ with respect to ${\Omega}$ as 
\[
D_{\Omega}\colon \cC^{\mathrm{op}} \longrightarrow \cC,\ \ X \mapsto \iHom(X,\Omega).
\]
Furthermore, define the $\cK$-dual of any $X \in \cC$ to be
\[ 
D_{\cK}X = \iHom(X,\Gamma \Omega);
\]
this construction implicitly depends on $\Omega$, but following the literature, this dependence is omitted from the notation. The next corollary recovers and generalizes the affine duality results proven in \cite[Thm.~4.1]{hartshorne_affine_1969} and \cite[Sec.~(5.2)]{local_cohom_schemes}.

\begin{thm}[Abstract affine duality]
Suppose $(\cC,\cK)$ is a local duality context. If $X\in \cC^{\omega}$ is compact, then there is a natural equivalence
\[
\xymatrix{\Lambda D_{\Omega}^2X \ar[r]^{\sim} & D_{\cK}^2X.}
\]
Furthermore, if the dualizing object $\Omega$ is compact, then the left hand side simplifies to $\Lambda X \simeq \Lambda D_{\Omega}^2X$.
\end{thm}
\begin{proof}
First note that $\Gamma D_{\Omega}X = \Gamma \iHom(X,{\Omega}) \simeq \iHom(X,\Gamma {\Omega}) \simeq  D_{\cK}X$ by \Cref{lem:torscomplihom}(2). We thus get:
\begin{align*}
\Lambda D_{\Omega}^2X & \simeq \iHom(D_{\Omega}X,\Lambda {\Omega}) & & \text{by \Cref{lem:torscomplihom}(1)} \\
& \simeq \iHom(\Gamma D_{\Omega}X, {\Omega}) & & \text{by~\Cref{thm:hps}} \\
& \simeq \iHom(\Gamma D_{\Omega}X,\Gamma {\Omega}) \\
& \simeq \iHom(D_{\cK}X,\Gamma {\Omega}) \\
& \simeq D_{\cK}^2X.
\end{align*}
If ${\Omega}$ is compact, then $D_{\Omega}^2 \simeq \mathrm{id}$, hence the claim.
\end{proof}

Another reformulation of local duality, originally due to Hartshorne~\cite[Thm.~2.1]{hartshorne_affine_1969}, can be generalized as follows:

\begin{cor}\label{cor:der_groth_dual_II}
Let $(\cC,\cK)$ be a local duality context, then there is a natural equivalence	
	\[
	\xymatrix{\Gamma X \ar[r]^-{\sim} & \iHom(DX,\Gamma A),}
	\]
for all compact objects $X\in \cC$.  
\end{cor}
\begin{proof}
	Suppose that $X$ is compact. By assumption on $\cC$, $X$ compact implies $X$ dualizable by~\Cref{lem:algcompdual}, and so we have a chain of natural equivalences
\begin{align*}
	\Gamma X\simeq \iHom(A,\Gamma X)\simeq & \iHom(A,X\otimes \Gamma A)\\
					\simeq & \iHom(A\otimes DX, \Gamma A)\\
					\simeq & \iHom(DX,\Gamma A).
\end{align*}
\end{proof}

\section{Modules}\label{sec:modules}

In this section, we study local duality between torsion and completion functors in the setting of modules over rings and module spectra over commutative ring spectra. In these cases, we recover the familiar notions of local cohomology and local homology, which go back to Grothendieck~\cite{grothendieck_localcohomology}, and their subsequent generalizations to the topological setting by Greenlees and May~\cite{gm_localhomology,greenleesmay_completions} and Lurie \cite[Section 4]{dag12}. Instead of working in the derived category like Dwyer and Greenlees do \cite{dwyer_complete_2002}, we will use the stable category of chain complexes as defined in \cite{ha}. In this framework, the local duality spectral sequence of Grothendieck is obtained as a composite functor spectral sequence. Similarly, we give alternative deductions of some results of Greenlees and May about derived functors of completion for complete regular local Noetherian rings.

Many of the results in this section are not new, but we hope that the unified formulation will illuminate existing treatments in the literature. Furthermore, this section serves as a toy example for the discussion of local duality in the setting of $E_*E$-comodules and the $E$-local category.

\begin{conv}
	We follow \cite{broadmann_sharp} when working in the graded case. Let $A$ be a commutative $\Z$-graded ring, and $I$ an ideal generated by a homogeneous sequence $\{x_1, x_2,\dots,x_n\}$. Let $\Mod^*_A$ denote the category of $\Z$-graded $A$-modules. As shown in \cite[Prop.~2.5]{abrams_categorical_1996}, $\Mod^*_R$ is a Grothendieck category. Moreover, using the graded versions of the usual functors, we can see this is a closed symmetric monoidal category. Thus, we can work in the stable category of chain complexes of graded modules; we then use the unique way of grading the functors so that when composing with the forgetful functor $\Mod^*_A\to \Mod_A$ we get their familiar non-graded versions back \cite[Ch.~13]{broadmann_sharp}. While not explicit in the notation, all functors are thus bigraded in this sense. 
\end{conv}

\subsection{Local duality for module spectra}\label{sec:modulespectra}

We start by discussing local cohomology and local homology for commutative ring spectra; this closely follows~\cite{greenleesmay_completions}, so we shall be brief. Suppose $A$ is an $\mathbb{E}_{\infty}$-ring spectrum\footnote{In fact, an $\mathbb{E}_{2}$-structure on $A$ would be sufficient for our purposes.} and let $\Mod_A$ be the symmetric monoidal stable category of $A$-module spectra. In the following, we will often refer to a module spectrum simply as a module and write the smash product in $\Mod_A$ as $\otimes_A$ or simply $\otimes$ if no confusion is likely to arise. In this section, we can work with $\Hom$ throughout since the internal hom-object is just an enrichment of the categorical one.

 Consider $I \subseteq \pi_*A$ a finitely generated homogeneous ideal and choose a minimal set of generators $\{x_1,\ldots,x_n\}$ for $I$; it can be checked easily that the constructions and results of this section are independent of this choice.

\begin{defn}\label{defn:koszul}
If $x \in \pi_iA$ is represented by an $A$-linear map $x\colon \Sigma^iA \to A$, then we define an $A$-module $\Kos(x) = \fib(A \to A[x^{-1}])$, where $A[x^{-1}]$ is the telescope of $x$, i.e., the colimit of the diagram
\[\xymatrix{A \ar[r]^-x & \Sigma^{-i}A  \ar[r]^-x & \Sigma^{-2i}A \ar[r]^-x & \ldots.}\] 
The Koszul object $\Kos(I)$ of the ideal $I = (x_1,\ldots,x_n)$ is then constructed as 
\[\Kos(I) = \bigotimes_{i=1}^n \Kos(x_i).\]
Completing the natural map to a cofiber sequence
\[\xymatrix{\Kos(I) \ar[r] & A \ar[r] & \check C(I)}\]
defines the {\v C}ech object $\check C(I)$ of $I$. 
\end{defn}

\begin{lem}\label{lem:koszulfiltration}
Let $\Sigma^{-1}A/x_i^s$ be the fiber of the map $ A \xr{x_i^s} \Sigma^{-k_i s}A$, where $k_i$ is the degree of $x_i$. Then there is a natural equivalence 
\[\xymatrix{\colim \left ( \bigotimes_{i=1}^n \Sigma^{-1}A/x_i^s \right ) \ar[r]^-{\sim} & \Kos(I)}\]
of $A$-modules.
\end{lem}
\begin{proof}
This is~\cite[Lem.~3.6]{greenleesmay_completions}.
\end{proof}
\begin{rem}
	A previous version of this paper had an incorrect version of this lemma. As a consequence, the formulas given in \cite[Cor.~3.13 and Lem.~3.17]{bhv2} have an incorrect suspension. The correct formulas are given below in \Cref{thm:ringlocdual}. This does not affect any of the main results in \cite{bhv2}. 
\end{rem}
It is hence natural to write $\Kos_s(I)= \bigotimes_{i=1}^n \Sigma^{-1}A/x_i^s$ for any $s \in \N$.  We call $(\Kos_s(I))_{s \ge 0}$ a Koszul system, terminology that will be justified by~\Cref{thm:ringlocdual}.  

\begin{prop}\label{prop:modulestronglykoszul}
	The system $(\Kos_s(I))_{s}$ is self-dual with $D\Kos_s(I)\simeq\Sigma^{n-sd} \Kos_s(I)$, $s\in\N$, where $d=k_1+k_2+\cdots + k_n$.
\end{prop}
\begin{proof}
	It suffices to show that $DA/x_i^s\simeq \Sigma^{-k_is-1} A/x_i^s$, which follows immediately from comparing the cofiber sequence
	\[
	\Sigma^{k_is} A \xrightarrow{x_i^s}  A\to A/x^s_i
	\]
to its dual.
\end{proof}

In the language of \cref{sec:abstract}, we now consider the local duality context $(\Mod_A, \Kos_1(I))$. Let $A/I=\bigotimes_{i=1}^n A/x_i$. Note that since $\Kos_1(I) \simeq \Sigma^{-n} A/I$ we could have taken $(\Mod_A, A/I)$ as our local duality context. We will see that in the case of modules over a discrete ring $A$, $A/I$ as a chain complex concentrated in degree zero is not in general equivalent to $\Kos_1(I)$ unless regularity conditions are imposed on $I$. Thus, we prefer to use the Kozsul object to avoid any confusion.

Following~\cite{greenleesmay_completions}, we can thus construct a category of $I$-torsion module spectra; this terminology will be justified in \cref{rem:torsion_just}. 
\begin{defn}
Define the category $\Mod_A^{I-\mathrm{tors}}$ of $I$-torsion $A$-modules as
\[\Mod_A^{I-\mathrm{tors}} = \Locid{\Mod_A}(\Kos_1(I))=\Loc_{\Mod_A}(\Kos_1(I)),\]
the localizing subcategory of $\Mod_A$ generated by the compact object $\Kos_1(I)$. The inclusion of the category $\Mod_A^{I-\text{tors}}$ of $I$-torsion $A$-modules into $\Mod_A$ will be denoted $\iota_{\text{tors}}$. 
\end{defn}
\color{black}

Since $\Mod_A^{\mathrm{tors}}$ is compactly generated, $\iota_{\text{tors}}$ admits a right adjoint $\Gamma_I$ 
\[
\xymatrix{\iota_{\mathrm{tors}}\colon \Mod_A^{I-\mathrm{tors}} \ar@<0.5ex>[r] & \Mod_A\colon \Gamma_I.\ar@<0.5ex>[l]}
\] 
Therefore, we can apply the results of Section~\ref{sec:ald} to obtain localization and completion adjunctions 
\[\xymatrix{\Mod_A^{I-\mathrm{loc}} \ar@<0.5ex>[r]^-{\iota_{\mathrm{loc}}} & \Mod_A \ar@<0.5ex>[l]^-{L_I} \ar@<0.5ex>[r]^-{\Lambda^I} & \Mod_A^{I-\mathrm{comp}} \ar@<0.5ex>[l]^-{\iota_{\mathrm{comp}}}
}\]
with respect to the ideal $I$, as well as a version of local duality. The first part of the next result partially recovers~\cite[Thm.~4.2, Thm.~5.1]{greenleesmay_completions}, and the second one is implicit in~\cite[Thm.~2.1]{dwyer_complete_2002}. 

\begin{thm}\label{thm:ringlocdual}
Let $A$ be a ring spectrum as above and consider a finitely generated homogeneous ideal $I \subseteq \pi_*A$. 

\begin{enumerate}
 \item The functors $\Gamma_I$ and $L_I$ are the smashing localization and colocalization functors associated to the fiber sequence
 \[\xymatrix{\Kos(I) \ar[r] & A \ar[r] & \check C(I)}\]
 and $\Lambda^I$ is the localization functor with category of acyclics given by $\Mod_A^{I-\mathrm{loc}}$. Moreover, $\Gamma_I$ and $\Lambda^I$ satisfy $\Lambda^I \circ \Gamma_I \simeq \Lambda^I$ and $\Gamma_I \circ \Lambda^I \simeq \Gamma_I$ and they induce mutually inverse equivalences
 \[\xymatrix{\Mod_A^{I-\mathrm{comp}} \ar@<1ex>[r]^{ \Gamma_I}_{\sim} & \Mod_A^{I-\mathrm{tors}}. \ar@<1ex>[l]^{\Lambda^I}}\]
 \item For $M,N \in \Mod_A$, there is an equivalence
 \[\xymatrix{\Hom(\Gamma_IM,N) \ar[r]^-{\sim} & \Hom(M,\Lambda^IN),}\]
 natural in each variable. 
  \item For $M \in \Mod_A$, there are natural equivalences
  \[
\Gamma_I M \simeq \colim_s \Kos_s(I) \otimes M,
  \] 
  and
	\[\Lambda^I M \simeq \lim_s \Sigma^{n-sd}\Kos_s(I) \otimes M,\]
with $d$ as in \cref{prop:modulestronglykoszul}.
\end{enumerate}
\end{thm}
\begin{proof}
The identification of $\Gamma_I$ and $L_I$ with the Koszul and \v Cech objects follows from \cite[Thm.~5.1(i)]{greenleesmay_completions}, while (3) is a result of the identification of $\Gamma_I$ and $\Kos(I)$, and \cref{prop:modulestronglykoszul} together with the proof of \Cref{cor:completion}. The rest is immediate from~\cref{thm:hps}. 
\end{proof}

The situation is summarized in the diagram on the left  of \eqref{eq:modulediagram}.

\begin{rem}\label{rem:torsion_just}
	$\Mod_A^{I-\text{tors}}=\Loc(\Kos_1(I))$ is the category of acyclics of $L_I$, or equivalently, the essential image of $\Gamma_I$. It follows by \cref{prop:homotopy_ss} that this is precisely the category of module spectra whose homotopy groups are $I$-torsion as $\pi_*A$-modules, thus justifying its name. 
\end{rem}

 This completes our discussion of local duality for the category of $A$-module spectra. In the next subsection, we specialize further to the case of a discrete commutative ring, and give a comparison between the topological and the algebraic context.

\subsection{Local duality for \texorpdfstring{$A$}{A}-modules}

Let $A$ be a commutative ring, and $I$ a finitely generated ideal with a fixed set of generators $\{x_1,x_2,\dots, x_n\}$. We denote by $\Mod_A$ the abelian category of $A$-modules, and $\Hom_A(-,-)$ the set of $A$-module maps.\footnote{Unfortunately this conflicts with the previous section, but it should be clear from context if we are working in the derived or underived setting.} We will often work in the stable category $\D_A$ of chain complexes as introduced in \cite[Def.~1.3.5.8]{ha}. This is just the differential graded nerve of the subcategory of fibrant objects in the injective model structure on $\mathrm{Ch}_A$. The space of maps in the derived category will be denoted by $\Hom(-,-)$ to distinguish it from the set of $A$-module morphisms which we always write as above. 

We will proceed by building upon local duality for module spectra, using Shipley's work on the relation between the category $\Mod_{HA}$ of module spectra over the Eilenberg--MacLane spectrum $HA$ and $\Ch(A)$. The key difference in this case is the existence of the abelian category $\Mod_A$ of $A$-modules, whose corresponding derived category is $\cD_A$. Consequently, we can and will compare the abstractly constructed local cohomology and local homology functors on $\cD_A$ to the derived functors of torsion and completion, thereby relating our version of local duality to the one studied in the literature. Let
\[
\xymatrix{\Theta\colon \Mod_{HA} \ar[r]^-{\sim} & \D_A}
\]
be the symmetric monoidal equivalence induced between the stable category of $HA$-module spectra and the derived category of $A$; see~\cite{shipley_hz} and~\cite[Rem.~7.1.1.16]{ha} for details. By a mild abuse of notation, we will also denote the object $\Theta(\Kos_1(I))$ in $\D_A$ by $\Kos_1(I)$, where the the module spectrum $\Kos_1(I)$ is defined in the previous subsection. 
Then, $\Kos_1(I)$ is a compact object in $\D_A$ and \Cref{thm:hps} applies to give a pair of adjoints
\[
\xymatrix{\iota_{\mathrm{tors}}\colon \D_A^{I-\mathrm{tors}} \ar@<0.5ex>[r] & \D_A \colon \Gamma_I,\ar@<0.5ex>[l].}
\]
where $\D_A^{I-\mathrm{tors}}$ is the localizing subcategory of $\D_A$ compactly generated by $\Kos_1(I)$, and $\Gamma_I$ is the resulting right adjoint to the inclusion $\iota_{\mathrm{tors}}$ of $\D_A^{I-\mathrm{tors}}$ in $\D_A$. Likewise, we obtain localization and completion adjunctions 
\[\xymatrix{\D_A^{I-\mathrm{loc}} \ar@<0.5ex>[r]^-{\iota_{\mathrm{loc}}} & \D_A \ar@<0.5ex>[l]^-{L_I} \ar@<0.5ex>[r]^-{\Lambda^I} & \D_A^{I-\mathrm{comp}} \ar@<0.5ex>[l]^-{\iota_{\mathrm{comp}}}.
}\]

We will see that all the functors above can be interpreted in terms of local (co)homology, thus recovering well known results. Let us begin by showing that $\Kos_1(I)$ is the usual cochain complex in $\D_A$.

\begin{prop}\label{prop:koszul_D_R}
	There is an equivalence $\Kos_1(I)\simeq \bigotimes_{i=1}^n (A\xrightarrow{x_i} A)$ in $\D_A$.	
\end{prop}
\begin{proof}
	By definition, the module spectrum $\Kos_1(I)$ is given by $\bigotimes_{i=1}^n\fib( HA\xrightarrow{x_i} HA)$. Since the functor $\Theta$ constructed in~\cite{shipley_hz} is a symmetric monoidal equivalence of stable categories, the result follows.
\end{proof}

Motivated by this we let $\Kos_s(I)=\bigotimes_{i=1}^n (A\xrightarrow{x_i^s} A)$ and $\Kos(I)=\colim_s \Kos_s(I) = \bigotimes_{i=1}^n (A\xrightarrow{} A[x_i^{-1}])$.

\begin{rem}
	There is an important difference between working with $A$-modules and module spectra. As mentioned before, in module spectra both $\Kos_1(I)$ and $HA/I$ are equivalent and thus give the same duality context. However, if we consider $A/I$ as a complex concentrated in degree zero in $\D_A$, this is not in general compact so in particular it is not equivalent to $\Kos_1(I)$. If $I$ is generated by a regular sequence, then it is known that $\Kos_1(I) \simeq \Sigma^n A/I$. Even if $I$ is not regular the two objects still generate the same duality context; this follows as $\Loc(A/I)\simeq \Loc(\Kos_1(I))$, see \cite[Prop. 6.1]{dwyer_complete_2002}.
\end{rem}

In order to identify the abstract functors above it suffices to show that cofiber sequence
\[
  \xymatrix{\Kos(I) \ar[r] & A \ar[r] & \check C(I)}
\]
is indeed $\Gamma_I A \longrightarrow A \longrightarrow L_I A$. This is equivalent to showing that $\Kos(I)$ is in $\Loc(\Kos_1(I))$ (i.e., $L_I$-acyclic) while $\check C(I)$ is $L_I$-local, facts that can be found in~\cite[Sec.~5]{greenlees_axiomatic}.

We are now ready to state our local duality theorem for the derived category of $A$.

\begin{thm}\label{thm:main_D_R}
Let $A$ be a ring as above and consider a finitely generated ideal $I \subseteq A$. 
\begin{enumerate}
 \item The functors $\Gamma_I$ and $L_I$ are the smashing localization and colocalization functors associated with the cofiber sequence
 \[\xymatrix{\Kos(I) \ar[r] & A \ar[r] & \check C(I)}\]
 and $\Lambda^I$ is the localization functor with category of acyclics given by $\D_A^{I-\mathrm{loc}}$.
 
 \item In addition, $\Gamma_I$ and $\Lambda^I$ satisfy $\Lambda^I \circ \Gamma_I \simeq \Lambda^I$ and $\Gamma_I \circ \Lambda^I \simeq \Gamma_I$ and they induce mutually inverse equivalences
 \[\xymatrix{\D_A^{I-\mathrm{comp}} \ar@<1ex>[r]^-{\Gamma_I}_{\sim} & \D_A^{I-\mathrm{tors}}. \ar@<1ex>[l]^-{ \Lambda^I}}\]
 \item For $X,Y \in \D_A$, there is an equivalence
 \[\xymatrix{\Hom(\Gamma_I X,Y) \ar[r]^{\sim} & \Hom(X,\Lambda^I Y),}\]
 natural in each variable.
 \item For $X \in \D_A$, there is a natural isomorphism  
	  \[
\Gamma_I X \simeq \colim_s \Kos_s(I) \otimes X,
  \] 
  and
	\[\Lambda^I X \simeq \lim_s \Sigma^n\Kos_s(I) \otimes X.\]

\end{enumerate}
\end{thm}
\begin{proof}
The identification of the cofiber sequence in (1) is contained in \cite[Sec.~5]{greenlees_axiomatic}. The rest of (1)--(3) are immediate from~\Cref{thm:hps}, while the equivalence in (4) follows from~\Cref{cor:completion} and~\Cref{prop:modulestronglykoszul}.
\end{proof}

We now work in the abelian category $\Mod_A$.
\begin{defn}
	For each $M \in \Mod_A$, we define the $I$-power torsion of $M$ as
	\[
	T_I^A M= \{x\in M| I^k x=0 \text{ for some }k\in \N \}.
	\]
\end{defn}
When the ring is clear from the context, it will be omitted from the notation. It is easy to see that the $I$-power torsion functor for modules can be identified with
\begin{equation}\label{eq:abeliantorsfunctorcolimitdescription}
T_I^A(M) \cong \colim_k \Hom_A(A/I^k,M);
\end{equation}
we refer the reader to \cite[Ch.~1]{broadmann_sharp} for a nice exposition on $I$-power torsion including this result.

	In \cref{prop:homology_ss}, we will see that the homology of $\Gamma_I M$ can be computed in terms of local cohomology; in particular, $H^*(\Gamma_I M)=H^*_I(M)$ for any module $M$ viewed as a complex concentrated in degree zero. It is then natural to ask when $\Gamma_I$ is in fact the total right derived functor of the $I$-power torsion functor $T_I$ defined at the abelian level. A complete answer to this question was given in \cite{schenzel_proreg}. First, we recall the following definition.

\begin{defn}
	An ideal $I$ is generated by a weakly proregular sequence $\{x_1, x_2,\dots,x_n\}$ if for all $k\geq 0$ there exists an $m \geq k$ such that
	\[
	\Kos_m(I)\to \Kos_k(I) 
	\]
	is the zero map. 
\end{defn}
\begin{exmp}
	If $A$ is Noetherian, then every ideal in $A$ is weakly proregular, see \cite[Thm.~3.34]{psy}. 
\end{exmp}
Here it is important to remember that $\Kos_s(I)$ depends on the choice of generators of $I$, unlike $\Kos(I)$ which does not.

Let $a\in A$, then an $A$-module $M$ is said to be $a$-bounded torsion if the increasing sequence $\ker(A\xrightarrow{a^k}A)$ stabilizes. Assume that $A$ is $x_i$-bounded torsion for all $i=1,\dots,n$, and let $\mathbf{L}$ and $\mathbf{R}$ indicate a total left and right derived functor, respectively. Schenzel shows that $I$ is generated by a weakly proregular sequence if and only if there is a natural equivalence $\mathbf{R}T_I X \simeq \Kos(I) \otimes X$ in the derived category \cite[Thm.~1.1]{schenzel_proreg}. Moreover, $\mathbf{L}C^I X \simeq \Hom(\Kos(I), X)$ if and only if $I$ is weakly proregularly generated, where $C^I$ denotes the $I$-adic completion functor. With this and the formulae in \cref{thm:main_D_R} under consideration, we get 
\begin{prop}\label{prop:gamma=derived_torsion}
	Let $A$ be a commutative ring and $I=(x_1,x_2,\dots,x_n)$ an ideal in $A$. Suppose that $A$ is bounded $x_i$-torsion for $i=1,\dots,n$. Then the following are equivalent:
	\begin{enumerate}
		\item $\{x_1, x_2,\dots,x_n\}$ is weakly proregular.
		\item There is a natural equivalence $\Gamma_I\simeq \mathbf{R}T_I$.
		\item There is a natural equivalence $\Lambda^I\simeq \mathbf{L}C^I$.
	\end{enumerate}
\end{prop}

Thus, the functors $\Gamma_I$ and $\Lambda^I$ which satisfy local duality for any finitely generated ideal $I$ agree with the total derived functors of classical torsion and completion provided $I$ is sufficiently regular.

\begin{rem}
This result exhibits the central difference between our approach to local duality and the one usually taken: The classical approach starts with explicit torsion and completion functors and then shows that they are related by local duality in certain cases. In contrast, we abstractly construct local cohomology and local homology functors satisfying local duality and then identify these functors with the usual ones under mild conditions. Therefore, we are able to produce well behaved generalizations of local cohomology and local homology in those cases not covered by the literature.
\end{rem}

Using the proposition above, we get the following identification.

\begin{prop}\label{prop:Gamma=colimHom}
	Let $A$ be a commutative ring and $I=(x_1,x_2,\dots,x_n)$ a weakly proregularly generated ideal. Suppose that $A$ is bounded $x_i$-torsion for $i=1,\dots,n$. Then, there is an equivalence of functors in $\D_A$
	\[
	\Gamma_I\simeq \colim_k \Hom(A/I^k,-).
	\]
\end{prop}
\begin{proof}
	Using that $T_I(-)\cong \colim_k\Hom_A(A/I^k,-)$, and the fact that colimits are exact in $\Mod_A$, we conclude that the total right derived functor is given by
	\[
	\mathbf{R}T_I \simeq \colim_k\Hom(A/I^k,-)
	\]
	in the derived category. Thus, the claim follows by \cref{prop:gamma=derived_torsion}.
\end{proof}
Next, we give interpretations of all the functors involved in terms of the classical notion of local (co)homology and \v Cech cohomology. Recall that the $s$-th local cohomology of an $A$-module $M$ is given by
\[
	H^s_I(M)=H^s(\Kos(I)\otimes M).
\]
Dually, the $s$-th local homology of $M$ was defined in~\cite{gm_localhomology} as 
\[
	H^I_s(M)=H_s(\Hom(\Kos(I), M)).
\]

We can also define local \v Cech cohomology by 
\[
  \CH^s_I(M)=H^s(\check C(I)\otimes M). 
\]

As noted by Greenlees and May in~\cite{greenleesmay_completions}, there is a filtration on $\Kos(x_i)$ induced by \Cref{lem:koszulfiltration}; the following spectral sequences then arise from the resulting product filtration of the Koszul and \v Cech objects after applying $\pi_*$.

\begin{prop}\label{prop:homotopy_ss}
	Let $A$ be a ring spectrum and $I$ a finitely generated homogeneous ideal of $\pi_*A$. Let $M\in \Mod_{A}$. There are strongly convergent spectral sequences of $\pi_*A$-modules:
	\begin{enumerate}
		\item $E^2_{s,t}=(H^{-s}_I(\pi_*M))_{t} \implies \pi_{s+t}(\Gamma_I M)$, with differentials $d^r\colon E^r_{s,t} \to E^r_{s-r,t+r-1}$,
		\item $E^2_{s,t}=(H_{s}^I(\pi_*M))_{t} \implies \pi_{s+t}(\Lambda^I M)$, with differentials $d^r\colon E^r_{s,t} \to E^r_{s-r,t+r-1}$, and
		\item $E^2_{s,t}=(\CH^{-s}_I(\pi_*M))_{t} \implies \pi_{s+t}(L_I M)$, with differentials $d^r\colon E^r_{s,t} \to E^r_{s-r,t+r-1}$.
	\end{enumerate}
\end{prop}

By specializing to the ring spectrum $HA$ for a given commutative ring $A$, we can deduce similar spectral sequences converging to the homology of the corresponding functors in the derived category. Alternatively, these are easy to obtain directly using the formulae from \Cref{thm:main_D_R}, and the first two have previously appeared in \cite{dwyer_complete_2002}.

\begin{prop}\label{prop:homology_ss}
		Let $A$ be a commutative ring and $I$ a finitely generated ideal. Let $X \in \D_A$. There are strongly convergent spectral sequences of $A$-modules:
	\begin{enumerate}
		\item $E_2^{s,t}=H^{s}_I(H^tX) \implies H^{s+t}(\Gamma_I X)$, 
		\item $E^2_{s,t}=H_{s}^I(H_tX) \implies H_{s+t}(\Lambda^I X)$, and
		\item $E_2^{s,t}=\CH^{s}_I(H^t X) \implies H^{s+t}(L_I X)$.
	\end{enumerate}
\end{prop}

The situation in~\Cref{prop:homotopy_ss} can be visualized in the following diagram:

\begin{equation}\label{eq:modulediagram}
\xymatrix{& \Mod_{HA}^{I-\mathrm{loc}} \ar@<0.5ex>[d] \ar@{-->}@/^1.5pc/[ddr] & & & \D_{A}^{I-\mathrm{loc}} \ar@<0.5ex>[d] \ar@{-->}@/^1.5pc/[ddr] \\
& \Mod_{HA} \ar@<0.5ex>[u]^{L_{I}} \ar@<0.5ex>[ld]^{\Gamma_{I}} \ar@<0.5ex>[rd]^{\Lambda^I} & \ar@{~>}[r]^{\pi_*} & & \D_{A} \ar@<0.5ex>[u]^{L_{I}} \ar@<0.5ex>[ld]^{\Gamma_{I}} \ar@<0.5ex>[rd]^{\Gamma^I} \\
\Mod_{HA}^{I-\mathrm{tors}} \ar@<0.5ex>[ru] \ar[rr]_{\sim} \ar@{-->}@/^1.5pc/[ruu] & &\Mod_{HA}^{I-\mathrm{comp}} \ar@<0.5ex>[lu] &  \D_{A}^{I-\mathrm{tors}} \ar@<0.5ex>[ru] \ar[rr]_{\sim} \ar@{-->}@/^1.5pc/[ruu] & & \D_{A}^{I-\mathrm{comp}}. \ar@<0.5ex>[lu]}
\end{equation}

\subsection{Classical local duality} 
\label{sub:applications}

In this section, we show how our framework allows us to effortlessly recover important duality theorems found in the literature. First, we deduce the Greenlees--May spectral sequence introduced in \cite[Prop.~3.1]{gm_localhomology}, often simply referred to as Greenlees--May duality.

\begin{thm}[Greenlees--May]\label{thm:gmdualityclassical}
	For any $M \in \cD_A$ there is a third quadrant spectral sequence
	\[
	E_2^{s,t}=\Ext_A^s(H^{-t}_I(A),M) \implies H^I_{-t-s}(M), 
	\]
	with differentials $d_r\colon E_r^{s,t} \to E_r^{s+r,t-r+1}$.
\end{thm}
\begin{proof}
	By local duality, we have an equivalence $\Lambda^I M \simeq \Hom(\Gamma_I A, M)$. Thus, the spectral sequence above is just the hypercohomology spectral sequence corresponding to the composition of the functor $\Gamma_I$ with $\Hom(-, M)$.
\end{proof}

The original local duality theorem of Grothendieck appeared in \cite{grothendieck_localcohomology} as lecture notes from a seminar given by Grothendieck at Harvard in 1961. Our next result recovers Grothendieck local duality in its more general form.

\begin{thm}[\textbf{Grothendieck local duality}, {\cite[Thm.~6.8]{grothendieck_localcohomology}}]\label{thm:glocalduality}
	Let $A$ be a ring with $H^q_I(A)=0$ for $q>n$, and write $D_Q=\Hom_A(-,Q)$ for the duality functor defined by an injective module $Q$. Then, for any $M \in \Mod_A$ there is a convergent first quadrant spectral sequence
	\[
	E_2^{p,q}=\Ext_A^p(M,D_Q(H^{n-q}_I A)) \implies D_Q(H_I^{n-p-q}(M)).
	\]

\end{thm}
\begin{proof}
Since $\Gamma_I$ is smashing, we can write
 \begin{equation}\label{eq:gldauxeq}
 \Hom(M,\Hom(\Gamma_I A,Q)) \simeq \Hom(\Gamma_I M,Q),
 \end{equation}
where $M$ is an $A$-module thought of as a complex concentrated in degree zero in $\D_A$. The desired spectral sequence is the hypercohomology spectral sequence for the composition of $\Gamma_I$ and $\Hom(M,\Hom(-,Q))$, with abutment given by the cohomology of the left hand side in the equivalence of \eqref{eq:gldauxeq}, where the identification of the $E_2$-term uses the injectivity of $Q$. By \cite[5.7.9]{weibel_homological} this spectral sequence is always weakly convergent, and the existence of a vanishing line implies that it is also strongly convergent. 
\end{proof}
\begin{rem}
	This result can also be deduced from Greenlees--May duality \Cref{thm:gmdualityclassical}, see \cite[Prop.~3.8]{gm_localhomology}; in contrast, our proof only uses that $\Gamma_I$ is smashing. 
\end{rem}

	Grothendieck's original statement was written for $A$ a complete local Noetherian ring of dimension $n$, and $Q$ a dualizing module for $A$. Recall that the Krull dimension of a ring $A$, $\dim(A)$, is the length of its longest strictly ascending chain of prime ideals, while its $I$-depth is the length of a maximal regular sequence contained in $I$. A local Cohen--Macaulay ring is a Noetherian local ring with Krull dimension equal to its $\m$-depth, where $\m$ is the maximal ideal of $A$. A local Cohen--Macaulay ring $A$ is called Gorenstein if its injective dimension as an $A$-module is finite. If $A$ is Gorenstein, $H^n_\m(A)$ is known to be isomorphic to the injective hull $E(k)$ of the residue field $k=A/\m$ of $A$~\cite[Lem.~11.2.3]{broadmann_sharp}.
	Under these hypotheses, $E(k)$ is a dualizing module for $A$ by \cite[Prop.~4.10]{grothendieck_localcohomology}, and we can take $D_{E(k)}=\Hom(-,E(k))$ for the duality functor.

\begin{cor}[{\cite[Thm.~6.3]{grothendieck_localcohomology}}]\label{cor:gdlocalduality}
	Suppose that $A$ is a Gorenstein local ring of dimension $n$. Then, for any finitely generated $A$-module $M$ and all $k$ there is a natural isomorphism
	\[
  		C^\m \Ext_A^k(M, A) \cong \Hom_A(H^{n-k}_\m(M), E(k)),
\]
where $C^\m$ is the $\m$-adic completion functor. Moreover, if $A$ is complete, then there is a natural isomorphism
	\[
		H^k_\m(M)\cong D_{E(k)}(\Ext_A^{n-k}(M,A)),
	\]
\end{cor}

\begin{proof}
	Since $A$ is Gorenstein, $H^n_\m(A)\cong E(k)$ is the only non-trivial local cohomology of $A$. Hence, the spectral sequence of \cref{thm:glocalduality} collapses to give the isomorphism
	\[
	\Ext^k_A(M, D_{E(k)}(H^n_\m A)) \cong \Hom_A(H^{n-k}_\m(M), E(k)).
	\]
	To reinterpret the left hand side, first note that we have
	\[
	\Hom( M, \Hom(\Gamma_\m A, \Gamma_\m A))\simeq \Hom( M, \Hom(\Gamma_\m A, A)) \simeq \Hom(\Gamma_\m A, \Hom(M,A)), 
	\]
where the first equivalence comes from the fact that $\Gamma_\m$ is right adjoint to the inclusion of $\D_{A}^{I-\mathrm{tors}}$ in $\D_A$. Moreover, $\Hom(\Gamma_\m A, \Hom(M,A))\simeq \Lambda^\m \Hom(M,A)$ by local duality.

In what follows let $L^\m_sM$ be the $s$-th left derived functor of $\m$-adic completion for $s \ge 0$, to be studied further in~\Cref{sec:lcompletion}. For now we just need the following: if $N$ is a finitely generated $A$-module, then $L^\m_0N \cong C^\m N$, and $L^\m_sN = 0$ for $s>0$, as proven in~\cite[Prop.~A.4, Thm.~A.6]{hovey_morava_1999}. Moreover, for arbitrary $N$, we have $H^{-s}(\Lambda^\m N) \cong L^\m_s N$, which follows from~\Cref{thm:gm_localhomology} below.  Then, using the Grothendieck spectral sequence we have
\[
   H^k(\Lambda^\m \Hom(M,A))\cong L^\m_0 \Ext_A^k(M,A) \cong C^\m \Ext^k_A(M,A)
\]
as $\Ext_A^k(M,A)$ is finitely generated. This completes the proof of the first part. 

Under the assumption that $A$ is complete, the second equivalence is a consequence of the first and Matlis duality \cite[Thm.~3.2.13]{cmrings}.
\end{proof}

\subsection{$L$-completion}\label{sec:lcompletion}

Let $A$ be a Noetherian ring and $I\subseteq A$ an ideal as before. It is well known that the $I$-adic completion of an $A$-module, $C^I(M)= \lim_s M/I^sM$, is neither right nor left exact as a functor on $\Mod_A$, the abelian category of $A$-modules, see \cite[App.~A]{hovey_morava_1999} for example. 

We could deal with this situation by either considering the zeroth left derived or right derived functor of $C^I$ as the ``correct'' homological approximation of completion. These were initially studied (under more general assumptions on the ring $A$) by Greenlees and May in \cite{gm_localhomology}, where many results on the left derived functors were established, including some that we recover here. 

In contrast, they showed that the right derived functors of completion vanish, so it is natural to focus only on the left derived functors of the $I$-adic completion. Moreover, these derived functors are relevant to our discussion; the connection comes through the following interpretation of these functors in terms of local homology. We note that the following result holds for \emph{any} ring $A$. 

\begin{thm}[{\cite[Thm.~2.5]{gm_localhomology}}]\label{thm:gm_localhomology}
		Let $A$ be a commutative ring. Let $I=(x_1,\dots,x_n)$ be an ideal generated by weakly proregular finite sequence and assume that $A$ is $x_i$-bounded torsion for all $i$. Then, for all $A$-modules $M$, and all $s\geq 0$, there is a natural isomorphism
		
		\[
  				H^I_s(M)\cong L_s^IM,
\]		
where $L^I_s$ is the $s$-th left derived functor of $I$-adic completion. 
\end{thm}
\begin{proof}
	On the one hand, by \cref{prop:gamma=derived_torsion} we have $\Lambda^I\simeq \mathbf{L}C^I$. On the other hand, local duality implies that $\Lambda^I\simeq \Hom(\Gamma A, X)$. The conclusion follows after applying homology to the equivalence $\Hom(\Gamma A, X)\simeq \mathbf{L}C^I$. 	
\end{proof}

\begin{rem}
	The original statement in \cite[Thm.~2.5]{gm_localhomology} required the ideal $I$ to be proregular, see \cite[Def.~1.8]{gm_localhomology}.  By \cite[Lem.~2.7]{schenzel_proreg} any proregular ideal is weakly proregular, although the converse is not true, and thus our result is slightly stronger. Moreover, since the regularity condition we use above is optimal in \cref{prop:gamma=derived_torsion}, it would be reasonable to expect that the same is true for \Cref{thm:gm_localhomology}.
\end{rem}

Continuing in the same fashion, we use the Grothendieck spectral sequence to give a quick alternative proof of~\cite[Prop.~1.1]{gm_localhomology}. Here, $\lim^p$ denotes the $p$-th derived functor of $\lim$. 

\begin{cor}[Greenlees--May]\label{lem:gmses}
If $M \in  \Mod_A$, then there is a short exact sequence 
\[\xymatrix{0 \ar[r] & \lim^1_k \Tor^{s+1}_A(A/I^k,M) \ar[r] & L_sM \ar[r] & \lim_k \Tor^s_A(A/I^k,M) \ar[r] & 0}\]
for any $s \ge 0$. In particular, there exists a natural epimorphism $L_0M \to C^I(M)$.
\end{cor}
\begin{proof}
The Grothendieck spectral sequence associated to $C^I(-) = \lim A/I^k \otimes (-)$ exists because an inverse system of modules satisfying the Mittag--Leffler condition remains Mittag--Leffler under tensoring with arbitrary modules. Therefore, we can work with a flat resolution of $M$. Thus, the spectral sequence takes the form
\[
\xymatrix{E_2^{p,q}=\lim^p_k\Tor_q^A(A/I^k,M) \implies L_{p-q}^IM.}
\]

The fact that $\lim^p = 0$ for all $p>1$ in the category $\Mod_A$ implies that the spectral sequence degenerates to a short exact sequence.
\end{proof}

\section{Comodules over a flat Hopf algebroid}\label{sec:comodules}
We now move on to the main example of interest in this paper, which is the stable category of comodules over a Hopf algebroid. In this section we start by reviewing the basic properties of the abelian category of comodules over a flat Hopf algebroid $(A,\Psi)$, which we denote by $\Comod_\Psi$, collecting known results that will be required.  It is known that $\Comod_\Psi$ is a Grothendieck abelian category, and hence has a corresponding derived category $\mathcal{D}_\Psi$. The latter has some deficiencies however, as noted in~\cite{hovey_htptheory} or~\cite[Sec.~3]{hovey_chromatic}. To overcome these we instead give an alternative construction based on work of Hovey~\cite{hovey_htptheory} and Krause~\cite{krausestablederivedcat}, and prove enough about this stable category so that we can study local duality thoroughly in the sequel.  We finish by showing that, in many cases, our model agrees with Hovey's model in a suitable sense.

\subsection{Comodules over a Hopf algebroid}\label{sec:comodintro}
We will begin by reviewing the abelian category of comodules over a Hopf algebroid. None of the results in this section are new, and we refer the reader to~\cite[Sec.~1]{hovey_htptheory} and~\cite[App.~A1]{ravenel_86} for further details. 

Recall that a Hopf algebroid $(A,\Psi)$ over a commutative ring $K$ is a cogroupoid object in the category of commutative $K$-algebras. In particular, it consists of a pair $(A,\Psi)$ of commutative $K$-algebras, along with $K$-algebra morphisms
\[
\begin{split}
	\eta_L&\colon A \to \Psi \\
	\eta_R&\colon A \to \Psi \\
	\Delta&\colon \Psi \to \Psi \otimes_A \Psi \\
	\epsilon&\colon \Psi \to A \\
	c&\colon \Psi \to \Psi,
\end{split}
\]
satisfying a series of identities that ensure that, for any commutative ring $B$, the sets $\Hom(A,B)$ and $\Hom(\Psi,B)$ are the objects and morphisms of a groupoid, respectively. Here, $\eta_L$ gives $\Psi$ the structure of a left $A$-module, $\eta_R$ gives $\Psi$ a right $A$-module structure, and $\Psi \otimes_A \Psi$ refers to the bimodule tensor product.

\begin{defn}
	A left $\Psi$-comodule $M$ is a left $A$-module $M$ together with a counitary and coassociative left $A$-linear morphism $\psi_M\colon M \to \Psi \otimes_A M$. 
\end{defn}

We write $\Comod_\Psi$ for the category of $\Psi$-comodules; we will always assume that $\Psi$ is a flat $A$-module, which ensures that $\Comod_\Psi$ is a cocomplete abelian subcategory of $\Mod_A$~\cite[Lem.~1.1.1]{hovey_htptheory}. The forgetful functor $\Comod_\Psi \to \Mod_A$ has a right adjoint that sends an $A$-module $M$ to the $\Psi$-comodule $\Psi \otimes_A M$, with structure map $\Delta \otimes 1$~\cite[Def.~A1.2.1]{ravenel_86}; such a comodule is called an extended comodule. It follows that the colimit in $\Comod_\Psi$ is just the colimit in $A$-modules equipped with its canonical comodule structure. The forgetful functor does not preserve limits, or even infinite products, and so in general it is difficult to construct right adjoints in $\Comod_\Psi$. Nonetheless, the category $\Comod_\Psi$ is also complete, see~\cite[Prop.~1.2.2]{hovey_htptheory}. Since we will need to understand inverse limits of comodules in the sequel, we now explain how they are constructed, mimicking Hovey's construction of products. 

Let $(N_k)$ be an inverse system of comodules and let $\psilim N_k$ be the inverse limit we wish to construct. Suppose first that we have an inverse system $(M_k)$ of $A$-modules. Arguing via adjointness we see that we must have 
\begin{equation}\label{eq:comodlimextended}
\xymatrix{
\psilim (\Psi \otimes_A M_k) \cong \Psi \otimes_A \lim_k M_k,}
\end{equation}
where $\lim_kM_k$ denotes the inverse limit in $\Mod_A$. Given a set of comodule maps $f_k\colon \Psi \otimes_A M_k \to \Psi \otimes_A L_k$ we define $\psilim(f_k)$ by \[
\begin{split}
\Psi \otimes_A \lim_k M_k \xr{\Delta \otimes 1} \Psi \otimes_A \Psi \otimes_A \lim_k M_k \xr{1 \otimes \alpha} \Psi \otimes_A \lim_k (\Psi \otimes_A M_k) \\
\xr{1 \otimes \lim_k(f_k)} \Psi \otimes_A \lim_k(\Psi \otimes_A L_k) \xr{1 \otimes \lim_k(\epsilon \otimes 1)} \Psi \otimes_A \lim_k L_k, 
\end{split}
\] 
where $\alpha$ is the canonical map. One can check that this defines the inverse limit on the full subcategory of extended $\Psi$-comodules. 

 Finally, given an inverse system $(N_k)$ of comodules, there are left exact sequences
\[
0 \to N_k \xr{\psi} \Psi \otimes_A N_k \xr{p_k} \Psi \otimes_A T_k,
\]
where $T_k$ is the cokernel of $\psi$ and $p_k$ is the composite
\[
\Psi \otimes_A N_k \to T_k \xr{\psi} \Psi \otimes_A T_k,
\]
see~\cite[Sec.~1.2]{hovey_htptheory}.

It follows that $\psilim(N_k)$ is the kernel, i.e., is defined by the short exact sequence
\begin{equation}\label{eq:kernelinvlimit}
\xymatrix{0 \ar[r] & \psilim(N_k) \ar[r] & \Psi \otimes \lim_k(N_k) \ar[r] & \Psi \otimes \lim_k (T_k). }
\end{equation}
One can check that this construction does indeed define the inverse limit in $\Comod_\Psi$.

The category of $\Psi$-comodules is symmetric monoidal. Let $M$ and $N$ be $\Psi$-comodules and define their comodule tensor product $M \otimes N$ to be the module $M \otimes_A N$. The comodule structure map is slightly more subtle; consider the composite 
\[
M \otimes_K N \xr{\psi_M \otimes \psi_N} (\Psi \otimes_A M) \otimes_K (\Psi \otimes_A N) \to \Psi \otimes_A M \otimes_A N,
\]
where the last map sends $g_1 \otimes m \otimes g_2 \otimes n$ to $g_1g_2 \otimes m \otimes n$. This composite then descends to a well-defined morphism $M \otimes_A N \to \Psi \otimes_A M \otimes_A N$ (however the separate maps are only defined when we tensor over $K$). It is sometimes useful to know that for a comodule $N$ there is a natural isomorphism of comodules $\Psi \otimes N \cong \Psi \otimes_A N$, where the latter refers to the extended $\Psi$-comodule, and thus we typically omit the subscript on tensor products. 

The tensor product has a right adjoint $\iHom_{\Psi}(-,-)$ making $\Comod_\Psi$ a closed symmetric monoidal category~\cite[Thm.~1.3.1]{hovey_htptheory}, although it is harder to explicitly give a formula for $\iHom_{\Psi}(M,N)$. We make the following observations: for an extended $\Psi$-comodule $\Psi \otimes N$, adjointness gives an isomorphism 
\begin{equation}\label{eq:ihomextended}
\iHom_\Psi(M,\Psi\otimes N) \cong \Psi \otimes \Hom_A(M,N)
\end{equation} and, as an $A$-module, there is always a morphism $\iHom_{\Psi}(M,N) \to \Hom_A(M,N)$; if $M$ is finitely presented over $A$, then this morphism is an isomorphism~\cite[Prop.~1.3.2]{hovey_htptheory}. 

With this closed structure, we can define the dual of a $\Psi$-comodule $M$ to be $DM = \iHom_{\Psi}(M,A)$. Then, $M$ is dualizable in $\Comod_\Psi$ if and only if $M$ is dualizable as an $A$-module, i.e., $M$ is finitely generated and projective as an $A$-module~\cite[Prop.~1.3.4]{hovey_htptheory}. Similarly, $M$ is compact in $\Comod_{\Psi}$ if and only if the underlying $A$-module of $M$ is compact in $\Mod_A$, which is equivalent to being finitely presented over $A$. A Hopf algebroid $(A,\Psi)$ will be called an Adams Hopf algebroid if it satisfies Adams' condition, i.e., if $\Psi$ is a filtered colimit of dualizable $\Psi$-comodules. 

We also note that $\Comod_\Psi$ is a Grothendieck abelian category, see~\cite[Thm.~5.4]{alonso_functorial} or~\cite[Prop.~3.4.7]{sitte2014local}; this implies that $\Comod_\Psi$ is locally presentable~\cite[Prop.~3.10]{bekor}. To summarize what we need:

\begin{prop}\label{prop:comoduleprop}
	The category $\Comod_{\Psi}$ of comodules over a flat Hopf algebroid $(A,\Psi)$ is a complete and cocomplete locally presentable Grothendieck abelian category, with a closed symmetric monoidal structure. In addition:
	\begin{enumerate}
		\item A comodule $M$ is dualizable (compact) if and only $M$ is dualizable (compact) as an $A$-module. 
		\item There is a morphism of $A$-modules $\iHom_{\Psi}(M,N) \to \Hom_A(M,N)$ which is an isomorphism when $M$ is compact.
		\item Filtered colimits in $\Comod_{\Psi}$ are exact and are computed in $\Mod_A$. 
	\end{enumerate}
\end{prop}

A morphism $\Phi\colon (A,\Psi) \to (B,\Sigma)$ of Hopf algebroids consists of a pair $(\Phi_0,\Phi_1)$ of ring homomorphisms $\Phi_0\colon A \to B$ and $\Phi_1\colon \Psi \to \Sigma$ making the obvious diagrams commute. Such a morphism induces an adjoint pair of functors~\cite{miller_ravenel_local}
\[
\xymatrix{\Phi_\ast\colon \Comod_\Psi \ar@<0.5ex>[r] & \ar@<0.5ex>[l] \Comod_\Sigma\colon\Phi^\ast,}
\]
with $\Phi_*$ symmetric monoidal~\cite[Lem.~1.13]{hovey_htptheory}. Consider the morphism $\epsilon\colon(A,\Psi) \to (A,A)$ which is the identity on $A$ and is given by the counit $\epsilon$ on $\Psi$. It is easy to check that there is an equivalence of categories $\Comod_A \simeq \Mod_A$, and that $\epsilon_*$ corresponds to the forgetful functor from $\Comod_\Psi$ to $\Mod_A$. In this case $\epsilon^\ast$ is the extended comodule functor mentioned previously.

Recall that a functor is called conservative if it reflects isomorphisms. 
\begin{lem}\label{lem:epsilonconservative}
The forgetful functor $\epsilon_*\colon\Comod_\Psi \to \Mod_A$ is conservative. 	
\end{lem}
\begin{proof}
	We need only show that $\epsilon_*$ is faithful, since any faithful functor whose domain is an abelian category reflects isomorphisms. That $\epsilon_*$ is faithful is clear from the definition of a morphism. 
\end{proof}
There are enough injectives in $\Comod_\Psi$; in fact, a $\Psi$-comodule is injective if and only if it is a comodule retract of $\Psi \otimes_A I$ for some injective $A$-module $I$~\cite[Lem.~2.1]{hs_localcohom}.
\begin{defn}\label{defn:iext}
For $\Psi$-comodules $M$ and $N$, $\uExt_\Psi^i(M,N)$ is the $i$-th right derived functor of $\iHom_\Psi(M,N)$, formed via an injective resolution of $N$. Similarly, we write $\ilim_{\cD_{\Psi},k}^iM_k$ or simply $\psilim^i M_k$ for the right derived functors of inverse limit of a system of comodules $(M_k)_k$.
\end{defn}

We will see in~\Cref{lem:derivedhomcohom} that if $M$ is finitely presented, then $\uExt^i_\Psi(M,N)$ also commutes with $\epsilon_*$. 
\subsection{Ind-categories of comodules}\label{sec:indconstruction}

In this subsection, we establish the categorical foundations for the stable category of comodules we are going to use in the rest of the paper. As pointed out by Hovey~\cite{hovey_htptheory}, the usual derived category $\cD_{\Psi}$ of comodules over some flat Hopf algebroid $(A,\Psi)$ lacks some desired properties. In particular, the comodule $A$ is in general not a compact object in $\cD_{\Psi}$, even for familiar examples like $(A,\Psi) = (BP_*,BP_*BP)$ or $(E_*,E_*E)$ with $E$ some version of Morava $E$-theory. Instead, in~\cite{hovey_htptheory} Hovey constructs a model structure on the category of chain complexes of comodules, which we denote by $\Stableh_\Psi$, whose homotopy category is, under suitable hypotheses, a stable homotopy category generated by a set of dualizable $\Psi$-comodules. This construction has two important additional properties:
\begin{itemize}
 \item If $(A,\Psi) = (A,A)$ is discrete, then $\Stableh_{\Psi} \simeq \cD_{A}$, the usual (unbounded) derived category of $A$-modules. 
 \item In general, the derived category $\cD_{\Psi}$ of $\Comod_{\Psi}$ is a left Bousfield localization of $\Stableh_{\Psi}$ at the homology isomorphisms. 
\end{itemize}
This category provides a natural framework for doing homological algebra of comodules, as amply demonstrated in~\cite{hovey_chromatic}. Moreover, $\Stableh_{\Psi}$ is well behaved enough so that our abstract local duality result can be applied. However, in order to concretely identify and study the local cohomology and local homology functors so constructed, we would have to work on the level of the model structure and not just in the homotopy category, which becomes technically demanding due to the construction of the model structure.

To explain the idea of the construction, consider the case of modules over the group algebra $k[G]$ for $k$ some field of characteristic $p$, and $G$ a finite $p$-group. The abelian category $\Mod_{k[G]}$ of $k[G]$-modules can be identified naturally with the category of comodules over the Hopf algebra $(k,k[G]^*)$, $k[G]^*$ being the $k$-linear dual of $k[G]$. However, for certain purposes the derived category $\D(G)$ of $\Mod_{k[G]}$ has some deficiencies; for example, the object $k \in \D(G)$ is not compact. As a remedy, one can work with the stable category $\stable_{k[G]}$, which is a triangulated category in which $k$ becomes a compact generator. There are several equivalent constructions available in the literature, but we would like to single out the following: Consider the thick subcategory $\Thick_{G}(k)$ of $\D(G)$ generated by $k$. Work of Krause~\cite{krausestablederivedcat}, Benson and Krause~\cite{krausebenson_kg}, and Hovey, Palmieri, and Strickland~\cite[Sec.~9.5]{hps_axiomatic}
provides equivalences
\[
\xymatrix{\Ind(\Thick_{G}(k)) \ar[r]^-{\sim} & \Ho(\cD_{kG}^{\mathrm{inj}}) \ar[r]^{\sim} & \stable_{k[G]}}
\]
of triangulated categories, where $\Ho(\cD_{kG}^{\mathrm{inj}})$ is the homotopy category of complexes of injective $k[G]$-modules. Note that the stable module category $\mathrm{StMod}(G)$ of $k[G]$ is the finite localization of $ \stable_{k[G]}$ at the thick subcategory generated by $k[G]$.

Let $(A,\Psi)$ be a flat Hopf algebroid, and write $\cG=\cG_{\Psi}$ for a set of representatives of dualizable comodules in $\Comod_{\Psi}$. By~\Cref{prop:comoduleprop}, the category $\Comod_{\Psi}$ of $\Psi$-comodules is a Grothendieck abelian category, so there exists a corresponding derived category of $\Psi$-comodules. The next result shows that this derived category admits a well-behaved homotopical model.
\begin{prop}[Hovey, Barnes--Roitzheim]\label{prop:dermodelstructure}
There is a cofibrantly generated monoidal model structure on the category $\Ch_{\Psi}$ of chain complexes of $\Psi$-comodules whose homotopy category is the usual derived category of $\Psi$-comodules. 
\end{prop}
\begin{proof}
This is a special case of \cite[Thm.~6.9]{barnesroitzheim_frankesmodel}, but for the convenience of the reader we sketch a construction. To this end, recall that Hovey \cite[Thm.~2.1.3]{hovey_htptheory} constructs a projective model structure on $\Ch_{\Psi}$. In this model structure, a map $f$ is a weak equivalence or fibration if and only if $\Hom(P,f)$ is a quasi-isomorphism or surjective map for all dualizable $\Psi$-comodules $P$, respectively. Left Bousfield localization at the quasi-isomorphisms then yields the desired model structure; to see that the resulting model structure is monoidal, see \cite{white_bousfield}. 
\end{proof}

We will write $\cD_{\Psi}$ for the underlying symmetric monoidal stable $\infty$-category of the model category constructed in \Cref{prop:dermodelstructure} above, and refer to it as the derived stable $\infty$-category of $\Psi$-comodules.
\color{black}
Using the methods of~\Cref{sec:ind-categories} we can now construct our version of the stable category of comodules. 

\begin{defn}\label{defn:stable_comod}
We define the stable $\infty$-category of $\Psi$-comodules as the ind-category of the thick subcategory of $\cD_{\Psi}$ generated by $\cG$, i.e.,
\[
\Stable_{\Psi} = \Ind(\Thick_{\Psi}(\cG)),
\]
where $\cG$ is a set of representatives of the class of dualizable objects in $\Comod_\Psi$ viewed as complexes concentrated in degree $0$.
\end{defn}

Since $\Thick_{\Psi}(\cG)$ is a small stable $\infty$-category with a closed symmetric monoidal structure $\otimes$ preserving finite colimits separately in each variable, \Cref{prop:indproperties} immediately yields the next result.

\begin{prop}\label{prop:adjoint}
If $(A,\Psi)$ is a flat Hopf algebroid, then the stable category of comodules $\Stable_{\Psi}$ is a presentable stable $\infty$-category compactly generated by $\cG$, and equipped with a closed symmetric monoidal structure $\otimes$ preserving colimits separately in each variable. 
\end{prop}
Following this we will refer to $\Stable_{\Psi}$ simply as the stable category of $\Psi$-comodules. \\

The stable category $\cD_{\Psi}$ is cocomplete, so the universal property of the ind-category yields a continuous functor
\[ 
\xymatrix{\Stable_{\Psi} \ar[r]^-{\omega} & \cD_{\Psi}}
\]
extending the natural inclusion $\Thick_{\Psi}(\cG) \to \cD_{\Psi}$. It follows from~\Cref{prop:indsymmmon} that $\omega$ is in fact a symmetric monoidal functor of stable categories, and it has a right adjoint $\iota$ by~\Cref{prop:indrightadjoint}. In the case of a discrete Hopf algebroid we get an analogue of the result mentioned in the introduction. 

\begin{lem}\label{lem:inddiscrete}
If $(A,A)$ is a discrete Hopf algebroid, then there is a symmetric monoidal equivalence
\[ 
\xymatrix{\Stable_A \ar[r]^-{\omega}_-{\sim} & \cD_A.}
\]
\end{lem}
\begin{proof}
If $(A,A)$ is a discrete Hopf algebroid, then $\Thick(\cG) = \Thick(A)$ as every dualizable $A$-module is finitely presented and projective. Because $A$ is a compact generator of $\cD_A$, $\omega$ is an equivalence.  
\end{proof}

This motivates the following definition. 

\begin{defn}\label{def:homotopygroups}
The homology groups of an object in $\Stable_{\Psi}$ are defined as the homology of the underlying chain complex, i.e.,
\[
\sh_*M = H_*(\omega M)
\]
for any $M \in \Stable_{\Psi}$. 
\end{defn}

We now study the relation between the stable category of comodules $\Stable_{\Psi}$ for a Hopf algebroid $(A,\Psi)$ and the derived category $\cD_A \simeq \Stable_A$. To this end, recall that the Hopf algebroid morphism $\epsilon\colon (A,\Psi) \to (A,A)$ induces an adjunction
\[ 
\xymatrix{\epsilon_*\colon \Comod_{\Psi} \ar@<0.5ex>[r] & \Comod_A\colon \epsilon^* \ar@<0.5ex>[l]}
\]
of abelian categories, where $\epsilon_*$ is the forgetful functor and $\epsilon^*(M) = \Psi \otimes_A M$, the extended comodule on $M \in \Comod_A \simeq \Mod_A$. Note that both functors are exact, because $\Psi$ is flat over $A$. It follows that there is an induced adjunction $(\epsilon_*,\epsilon^*)$ of stable categories between $\cD_{\Psi}$ and $\cD_A$. This adjunction lifts to an adjunction of stable comodule categories.

\begin{lem}\label{lem:derivedforget}
There is an adjunction of stable categories
\[
\xymatrix{\epsilon_*\colon\Stable_{\Psi} \ar@<0.5ex>[r]^-{} & \cD_A\colon \epsilon^* \ar@<0.5ex>[l]}
\]
which extends the adjunction $\epsilon_* \dashv \epsilon^*$ on derived categories. Moreover, the functor $\epsilon_*$ is symmetric monoidal while $\epsilon^*$ is continuous. 
\end{lem}
\begin{proof}
Since $\epsilon_*$ is exact, we can consider the restriction of $\epsilon_*$ to a functor $\Thick_{\Psi}(\cG) \to \Thick_A(A)$. The composite 
\[
\xymatrix{\Thick_{\Psi}(\cG) \ar[r] &  \Thick_A(A) \ar[r] & \cD_A}
\] 
extends to a symmetric monoidal functor $\Stable_{\Psi} \to \cD_A$, which we also call $\epsilon_*$ by a mild abuse of notation. Using~\Cref{lem:inddiscrete}, this functor can be identified with $\Ind(\epsilon_*)$, so~\Cref{prop:indrightadjoint} yields a right adjoint $\epsilon^*\colon \cD_A \to \Stable_{\Psi}$ to $\epsilon_*$. Finally, $\epsilon_*$ preserves compact objects in light of \cref{prop:comoduleprop} so it follows that $\epsilon^*$ is continuous.
\end{proof}

The following diagram summarizes the categories and functors introduced so far:
\[
\xymatrix{\Stable_{\Psi} \ar@<0.5ex>[r]^-{\omega} \ar@<0.5ex>[rd]^>>>>>>{\epsilon_*} & \D_{\Psi} \ar@<0.5ex>[l]^-{\iota} \ar@<0.5ex>[d]^-{\epsilon_*} \\
& \D_A. \ar@<0.5ex>[u]^-{\epsilon^*} \ar@<0.5ex>[ul]^{\epsilon^*}}
\]

\begin{rem}\label{rem:stablechoices}
We note that there is at least one reasonable variation of \Cref{defn:stable_comod}: Instead of defining $\Stable_{\Psi}$ as $\Ind(\Thick_{\Psi}(\cG))$, we could consider the ind-category on the thick subcategory of $\D_{\Psi}$ generated by the compact objects of $\Comod_{\Psi}$. While this choice comes with some advantages and disadvantages, it induces equivalent local cohomology and local homology functors in many examples of interest. In particular, this is the case for Landweber Hopf algebroids introduced in the next subsection, and hence for the theories studied in \Cref{sec:duality-chromatic}. 
\end{rem}

\subsection{Landweber Hopf algebroids}

Under some more restrictive assumptions on the Hopf algebroid, the relation between $\Stable_{\Psi}$ and $\D_{\Psi}$ becomes even closer. To this end, we will introduce the notion of a Landweber Hopf algebroid. 

\begin{defn}
Suppose $(A,\Psi)$ is a flat Hopf algebroid and write $\Comod_{\Psi}^{\omega}[0]$ for the image of the (nerve of the) abelian category of compact comodules in $\D_{\Psi}$. If $M$ is in $\Thick_{\Psi}(A)$ for every $M \in \Comod_{\Psi}^{\omega}[0]$, then we call $(A,\Psi)$ a Landweber Hopf algebroid.
\end{defn}

This terminology is inspired by the Landweber filtration theorem~\cite{land_filt} in stable homotopy theory, which allows the construction of a natural class of examples: Indeed, the proof of~\cite[Cor.~6.7]{hovey_htptheory} shows that any ring spectrum $R$ that is Landweber exact over $MU$ or $BPJ$, where $J$ is some finite regular invariant sequence, such that $R_*R$ is commutative, gives a Landweber Hopf algebroid $(R_*,R_*R)$. Examples of such ring spectra include $H\mathbb{F}_p$, topological $K$-theory or more generally Morava $E$-theory $E_n$, and the Brown--Peterson spectrum $BP$. 

Let $\cD_0 \subset \D_{\Psi}$ be the $\infty$-category of complexes $Q$ with homology concentrated in finitely many degrees and such that $H_d(Q) \in \Comod_{\Psi}$ is compact for all $d\in \Z$. These categories admit a convenient description for Landweber Hopf algebroids. 

\begin{lem}\label{lem:boundedhomology}
If $(A,\Psi)$ is a Landweber Hopf algebroid, then $\cD_0$ and $\Thick_{\Psi}(A)$ are equivalent as subcategories of $\cD_\Psi$. 
\end{lem}
\begin{proof}
Since $A$ is compact, $\Thick_{\Psi}(A)\subseteq \cD_0$ is clear, and we will prove the reverse inclusion by induction on the number of non-zero homology groups. Indeed, let $(Q,d) \in \cD_0$ be a complex whose homology is concentrated in degrees $[a,b]$. There is a commutative diagram of chain complexes
\[
\xymatrix{\ldots \ar[r]^{d_{b+2}} & Q_{b+1} \ar[r]^{d_{b+1}} & Q_b \ar[r]^{d_{b}} & Q_{b-1} \ar[r]^{d_{b-1}} & \ldots\\
\ldots \ar[r]^{d_{b+2}} & Q_{b+1} \ar[r]^{d_{b+1}} \ar[u]^{f_{b+1}}_= \ar[d]_{g_{b+1}} & \ker(d_b) \ar[r] \ar@{^{(}->}[u]^{f_{b}} \ar[d]_{g_{b}} & 0 \ar[r] \ar[u]^{f_{b-1}} \ar[d]_{g_{b-1}} & \ldots \\
\ldots \ar[r] & 0 \ar[r] & \ker(d_b)/\im(d_{b+1}) \ar[r] & 0 \ar[r] & \ldots}
\]
with $f$ being a quasi-isomorphism. If $a=b$, then $g$ is also a quasi-isomorphism, so $Q \in \Thick(A)$ since $(A,\Psi)$ is Landweber. If $a<b$, the cofiber of the map $g\circ f^{-1}$ is a complex $Q' \in \cD_0$ with homology concentrated in $[a,b-1]$. Hence $Q$ fits into a cofiber sequence
\[
\xymatrix{Q \ar[r] & \ker(d_b)/\im(d_{b+1}) \ar[r] & Q'}
\]
with $\ker(d_b)/\im(d_{b+1}), Q' \in \Thick(A)$ by induction hypothesis, so the claim follows. 
\end{proof}

Let $\cA$ be a Grothendieck abelian category. Following Krause~\cite{krausestablederivedcat}, Lurie~\cite[Sec.~1.3.6]{ha} constructs a stable category $\Dinj(\cA)$ of unbounded chain complexes of injective objects in $\cA$. Specializing to $\cA = \Comod_{\Psi}$, there exists a factorization
\[
\xymatrix{\Stable_{\Psi} \ar@{-->}[r]^-{\theta} \ar[rd]_{\omega} & \Dinjpsi \ar[d]^{\omega'} \\
& \D_{\Psi},}
\]
where we write $\Dinjpsi  =  \Dinj(\Comod_{\Psi})$.

\begin{lem}\label{lem:thetaff}
If $A$ is Noetherian, then the functor $\theta\colon \Stable_{\Psi} \to \Dinjpsi$ is fully faithful.
\end{lem}
\begin{proof}
As shown in~\cite[Lem.~3.6.7]{sitte2014local} using~\cite[Lem.~11.1]{hasimoto}, $A$ Noetherian implies that the abelian category $\Comod_{\Psi}$ is locally Noetherian. Krause's theorem in the form~\cite[Thm.~1.3.6.7]{ha} then identifies $\Dinjpsi = \Dinj(\Comod_{\Psi})$ with the ind-category $\Ind(\cD_0)$. Since $G \in \cD_0$ for each $G \in \cG$, there is a natural inclusion $\Thick_{\Psi}(\cG) \to \cD_0$, so the claim follows from the universal property of ind-categories. 
\end{proof}

For a Landweber Hopf algebroid $(A,\Psi)$, $\theta$ is an equivalence, and thus $\Stable_{\Psi}$ inherits a number of additional good properties:

\begin{prop}\label{prop:nicestable}
Assume $(A,\Psi)$ is a Landweber Hopf algebroid with $A$ Noetherian. There is a natural $t$-structure on $\Stable_{\Psi}$ such that the inclusion functor $\iota\colon \D_{\Psi} \to \Stable_{\Psi}$ is $t$-exact and induces natural equivalences 
\[
\xymatrix{(\D_{\Psi})_{\le k} \ar[r]^-{\sim} & (\Stable_{\Psi})_{\le k}}
\]
on the full subcategories of $k$-coconnective objects for all $k \in \Z$. 
\end{prop}
\begin{proof}
By \Cref{lem:boundedhomology}, $\cD_0$ is equivalent to $\Thick_{\Psi}(A)$, hence $\Thick_{\Psi}(A) = \Thick_{\Psi}(\cG)$ as dualizable comodules are compact. Combining this with the argument of~\Cref{lem:thetaff} yields an equivalence of stable categories 
\[
\xymatrix{\Stable_{\Psi} \ar[r]^-{\theta}_-{\sim} & \Dinjpsi}
\]
if $(A,\Psi)$ is a Landweber Hopf algebroid. Therefore,~\cite[Prop.~1.3.6.2]{ha} and~\cite[Rem. 1.3.6.3 and Rem.~1.3.6.4]{ha} apply to give the result.
\end{proof}

Note that, by construction, the natural notion of homotopy on $\Stable_{\Psi}$ induced by the $t$-structure of \Cref{prop:nicestable} coincides with the notion of homology given in \Cref{def:homotopygroups}.

\begin{cor}\label{cor:derivedcathomcohom}
If $(A,\Psi)$ is a Landweber Hopf algebroid with $A$ Noetherian, then there is an isomorphism
\[
\xymatrix{\sh^*\Hom_{\Stable_{\Psi}}(M,N) \ar[r]^-{\simeq} & \sh^*\Hom_{\D_{\Psi}}(\omega_*M,\omega_*N) = \Ext_{\Psi}^{*}(\omega_*M,\omega_*N)}
\]
of abelian groups, for all $M,N \in (\Stable_{\Psi})_{\le k}$ and $k\in \Z$.
\end{cor}

\begin{rem}\label{rem:krausegeneral}
Recent work of Krause's~\cite{krause_auslander} suggests that the Noetherianness assumption in \Cref{prop:nicestable} can be removed, but we have not checked the details.
\end{rem}

\subsection{Basic properties of $\Stable_{\Psi}$}
Before we can study local duality on $\Stable_\Psi$ we need to establish some of its basic properties. Indeed~\eqref{eq:torsionkoszul} and~\Cref{cor:completion} tell us that to understand torsion and completion on $\Stable_\Psi$ we need to understand colimits and limits. 

We start with a derived analogue of~\eqref{eq:ihomextended}. 

\begin{lem}\label{lem:ihom}
Let $M \in \mathcal{D}_A$ and $N\in\Stable_\Psi$, then there is an equivalence in $\Stable_\Psi$
	\[
\iHom(N,\epsilon^* M) \simeq \epsilon^*\iHom_{\cD_A}(\epsilon_*N,M). 
	\]
\end{lem}
\begin{proof}
The proof here is identical to the version for abelian categories. Note that by~\Cref{prop:adjoint}, the category $\Stable_\Psi$ is closed symmetric monoidal, i.e., there is a pair $\otimes$ and $\iHom$ satisfying the usual adjunction. Moreover,~\Cref{lem:derivedforget} gives an adjunction of stable $\infty$-categories between $\Stable_\Psi$ and $\mathcal{D}_A$ with $\epsilon_*$ is symmetric monoidal. Thus
	\[
	\begin{split}
\Hom_{\Stable_\Psi}(L,\iHom(N,\epsilon^* M)) & \simeq \Hom_{\Stable_\Psi}(L \otimes N,\epsilon^* M) \\ 
&\simeq \Hom_{\cD_A}(\epsilon_*L \otimes_A \epsilon_*N,M) \\
&\simeq \Hom_{\cD_A}(L,\iHom_{\cD_A}(\epsilon_*N,M)) \\
&\simeq \Hom_{\Stable_\Psi}(L, \epsilon^*\iHom_{\cD_A}(\epsilon_*N,M)).
\end{split}
	\]
	The result follows from the (derived) Yoneda lemma. 
\end{proof}

For the next result, we compare the internal hom $\iHom(-,-)$ in $\Stable_\Psi$ with that in the ordinary derived category, $\iHom_{\D_{\Psi}}(-,-)$.

\begin{lem}\label{lem:derivedhomcohom}
For $M,N \in \Stable_{\Psi}$ with $M$ compact there are natural equivalences
\[
\omega\iHom(M,N) \simeq \iHom_{\D_{\Psi}}(\omega M,\omega N) \quad \text{and} \quad \epsilon_*\iHom(M,N) \simeq \iHom_{\D_{A}}(\epsilon_*M,\epsilon_* N).
\]
In particular, we have $\sh^{*}\iHom(M,N) \cong \uExt_\Psi^*(M,N)$ as $\Psi$-comodules, and $\epsilon_*\uExt^\ast_{{\Psi}}(M,N) \cong \uExt_A^\ast(\epsilon_*M,\epsilon_*N)$ as $A$-modules. 
\end{lem}
\begin{proof}
We prove the claim about $\epsilon_*$, the one for $\omega$ being similar. Let $M$ be compact. Because $\epsilon_*$ is symmetric monoidal it preserves dualizable objects, and the dual of $\epsilon_*M$ is just $\epsilon_*(DM)$. We thus get equivalences
\[
\epsilon_* \iHom_{{\Psi}}(M,N) \simeq \epsilon_* (DM) \otimes \epsilon_* N \simeq D(\epsilon_* M) \otimes \epsilon_* N \simeq \iHom_{\D_{A}}(\epsilon_* M,\epsilon_* N).
\]
The second part of the claim follows directly from the first part along with the definition of $\sh^*$ and the fact that $\epsilon_*$ is exact, and hence commutes with $\sh^*$. 
\end{proof}

\begin{cor}\label{cor:wgendualizable}
For any flat Hopf algebroid $(A,\Psi)$, the stable category $\Stable_{\Psi}$ is compactly generated by dualizable objects. 
\end{cor}
\begin{proof}
By construction, it suffices to show that a dualizable $\Psi$-comodule $G \in \cG_{\Psi}$ remains dualizable in $\cD_{\Psi}$, i.e., that the canonical map
\[
\xymatrix{c_X\colon\iHom_{\cD_\Psi}(G,A) \otimes X \ar[r] & \iHom_{\cD_\Psi}(G,X)}
\]
is an equivalence for all $X \in \cD_{\Psi}$. Since $\epsilon_*G$ is dualizable in $\cD_{A}$ by \Cref{prop:comoduleprop}, \Cref{lem:derivedforget} and \Cref{lem:derivedhomcohom} show that $\epsilon_*c_X$ is a quasi-isomorphism. Since $\epsilon_*$ is exact, \Cref{lem:epsilonconservative} implies that $\epsilon_*\colon \cD_{\Psi} \to \cD_A$ is conservative, thus the claim follows. 
\end{proof}

We take the following as a definition.

\begin{defn}\label{def:tor}
	For $M,N \in \Stable_\Psi$, the higher Tor groups are defined by
	\[
\Tor_s^{\Psi}(M,N) = \sh_s(M \otimes N),
	\]
	which are $\Psi$-comodules by construction. 
\end{defn}

The next lemma is an immediate consequence of the construction of $\Stable_{\Psi}$.
 
\begin{lem}
The functor $\omega\colon\Stable_{\Psi} \to \D_{\Psi}$ preserves all colimits, and
\[
\sh_*\colim_{i \in I} M_{i} \cong \colim_{i \in I} H_*( M_{i}) 
\]
for every filtered diagram $(M_i)_{i \in I}$ of objects in $\Stable_{\Psi}$.
\end{lem}

Limits in $\Stable_{\Psi}$ are somewhat more complicated to understand, so that we require additional hypotheses on the Hopf algebroid. We shall use the notation ${\lim}^p_{i,\Stable_{\Psi}}D_i$ for the $p$-th homology group of the inverse limit ${\lim}_{i,\Stable_{\Psi}}D_i$ of a system $(D_i)_{i\in\N}$ in $\Stable_{\Psi}$.

\begin{lem}\label{lem:comodlim}
Let $(A,\Psi)$ be a Landweber Hopf algebroid with $A$ Noetherian. If $(D_i)_{i \in I}$ is an inverse system in $\cD_{\Psi}$, then there is a natural isomorphism
\[
\xymatrix{{\lim}^p_{i,\cD_{\Psi}}D_i \ar[r]^-{\simeq} & {\lim}^p_{i,\Stable_{\Psi}}\iota D_i}
\]
for all $p \ge 0$. 
\end{lem}

\begin{proof}
By \Cref{prop:nicestable}, the inclusion functor $\iota$ is fully faithful, hence the counit of the adjunction induces an equivalence $\omega \iota \simeq \Id$. Furthermore, $\iota$ is a right adjoint, so
\[
\xymatrix{\ilim_{\Stable_{\Psi}}\iota D_i \ar[r]^-{\sim} & \iota \ilim_{\cD_{\Psi}}D_i.}
\]
Since $\omega \iota \simeq \Id$, we obtain isomorphisms
\begin{align*}
\ilim^p_{\Stable_{\Psi}}\iota D_i & \cong \sh^{p}(\ilim_{\Stable_{\Psi}}\iota D_i) \\
& \cong \sh^{p}(\iota \ilim_{\cD_{\Psi}}D_i) \\
& \cong \sh^{p}(\omega \iota \ilim_{\cD_{\Psi}}D_i) \\
& \cong \sh^{p}(\ilim_{\cD_{\Psi}}D_i) \\
& \cong \ilim^p_{\cD_{\Psi}}D_i. 
\end{align*}
\end{proof}

In the situation of the lemma, we will write ${\lim}^p_{{\Psi},i}D_i$ unambiguously for the $p$-th derived functor of inverse limit of $(D_i)_{i \in I}$. We will adopt this convention from now on.

\subsection{Comparison to Hovey's model}\label{sec:hovcomparasion}

Our next goal is to show that $\Stable_{\Psi}$ is equivalent to the $\infty$-category underlying the homotopy model structure on the category of (unbounded) chain complexes $\Ch_{\Psi}$ constructed by Hovey in~\cite{hovey_htptheory}. In order to do so, we have to briefly recall Hovey's construction. 

Throughout this section, we assume that $(A,\Psi)$ is an \emph{amenable} Hopf algebroid~\cite[Def.~2.3.2]{hovey_htptheory}; note that by Proposition 2.3.3 of \emph{loc.~cit.}, Adams Hopf algebroids are always amenable. Let $\Ch_{\Psi}$ be the category of unbounded chain complexes of $\Psi$-comodules and, as before, let $\cG$ be a set of representatives for isomorphism classes of dualizable $\Psi$-comodules. There exists a model structure on $\Ch_{\Psi}$ known as the projective model structure in which a map $f$ is a weak equivalence or fibration if $\Hom_{\Ch_{\Psi}}(P,f)$ is a homology isomorphism or surjection for all $P \in \cG$, respectively. 

There is a notion of homotopy isomorphism in $\Ch_{\Psi}$, which should not be confused with being a homotopy equivalence. For $M \in \Comod_\Psi$, let $LM$ denote the cobar complex associated to $M$. For $X \in \Ch_{\Psi},P \in \cG$ and $n \in \Z$, Hovey defines
\[
\pi_n^P(X) = H_{-n}(\Hom_\Psi(P,LA \otimes_A X)). 
\]
A map $f$ in $\Ch_{\Psi}$ is said to be a homotopy isomorphism if it induces an isomorphism on $\pi_n^P$ for all $n\in \Z$ and $P \in \cG$. The homotopy model structure on $\Ch_{\Psi}$ is then constructed as the left Bousfield localization of the projective model structure at the homotopy isomorphisms. 

\begin{defn}
The category of unbounded chain complexes $\Ch_{\Psi}$ equipped with the homotopy model structure will be denoted by $\Stableh_{\Psi}$. By a mild abuse of notation, we will refer to the underlying $\infty$-category of this model category by $\Stableh_{\Psi}$ as well. Furthermore, $\Ho(\Stableh_{\Psi})$ denotes the homotopy category of $\Stableh_{\Psi}$. 
\end{defn}

The basic properties of $\Stableh_{\Psi}$ are summarized in the following result. 

\begin{prop}[Hovey]\label{prop:stablepsi}
The category $\Stableh_\Psi$ is a bigraded algebraic stable homotopy theory with the property that 
\[
\Stableh_\Psi(M,\Sigma^kN) \simeq \Ext_\Psi^k(M,N),
\]
for $M$ in the thick subcategory generated by the dualizable $\Psi$-comodules and $N \in \Comod_{\Psi}$, where $\Sigma^kN$ refers to the comodule $N$ thought of as a complex concentrated in degree $k$. Moreover, $\Stable_{\Psi}$ is compactly generated by the set $\cG$ of dualizable $\Psi$-comodules.
\end{prop}
\begin{proof}
This is proven by Hovey in~\cite[Thm.~6.2]{hovey_htptheory} and~\cite[Prop.~6.10]{hovey_htptheory}, although there is one subtlety: Hovey shows that the dualizable comodules form a set of compact weak generators for $\Stableh_{\Psi}$. However, \cite[Thm.~2.3.2]{hps_axiomatic} implies that $\cG$ is in fact a set of compact generators in the strong sense.  
\end{proof}
 
We now come to our main comparison result, which shows that Hovey's model category $\Stableh_{\Psi}$ is equivalent to the one constructed in~\Cref{sec:indconstruction}. 

\begin{thm}\label{thm:hoveystable}
Suppose $(A,\Psi)$ is an amenable Hopf algebroid, then there is a canonical equivalence of stable categories
\[
\xymatrix{\Stable_{\Psi} \ar[r]^-{\sim} & \Stableh_{\Psi},}
\]
which is compatible with the natural functors to the derived stable $\infty$-category $\cD_{\Psi}$. In particular, this induces an equivalence of stable homotopy categories
\[
\xymatrix{\Ho(\Stable_{\Psi}) \ar[r]^-{\sim} & \Ho(\Stableh_{\Psi}).}
\]
\end{thm}
\begin{proof}
Consider the functor of $\infty$-categories $\Stableh_{\Psi} \to \cD_{\Psi}$ given by left Bousfield localization at the homology isomorphisms. The induced morphism of spaces
\[ 
\xymatrix{\Hom_{\Stableh_{\Psi}}(P,Q) \ar[r] & \Hom_{\cD_{\Psi}}(P,Q)}
\]
is a homotopy equivalence for any pair of dualizable comodules $P,Q \in \cG$ by Hovey's result,~\Cref{prop:stablepsi}, so we obtain a fully faithful lift $\xi$ 
\[ 
\xymatrix{& \Stableh_{\Psi} \ar[d]  \\
\Thick_{\Psi}(\cG) \ar[r] \ar@{-->}[ru]^{\xi} & \cD_{\Psi}}
\]
of the natural inclusion. Consequently, there is an induced functor $\Xi$ fitting into a commutative diagram of $\infty$-categories
\[
\xymatrix{\Thick_{\Psi}(\cG) \ar[r]^j \ar[rd]_{\xi} & \Stable_{\Psi} \ar@{-->}[d]^{\Xi} \\
& \Stableh_{\Psi}.}
\]
Since $\xi$ is fully faithful and $\cG$ is a set of compact generators of $\Stableh_{\Psi}$ by~\Cref{prop:stablepsi}, $\Xi$ is an equivalence of stable $\infty$-categories. Moreover,~\Cref{prop:indsymmmon} implies that this equivalence lifts to an equivalence of stable categories, so the claim follows. 
\end{proof}

\begin{rem}\label{rem:monogenic}
The proof of \Cref{lem:boundedhomology} shows that $\Stable_{\Psi}$ is in fact monogenic whenever $(A,\Psi)$ is a Landweber Hopf algebroid. In this case, we could therefore define $\Stable_{\Psi}$ equivalently as the ind-category on $\Thick_{\Psi}(A)$. This is analogous to Hovey's result~\cite[Thm.~6.6]{hovey_htptheory} which says that $\Stableh_{\Psi}$ is monogenic if the dualizable $\Psi$-comodules are contained in the thick subcategory generated by $A$.  
\end{rem}

\section{Local duality for comodules}\label{sec:comodules-duality}

With the stable category $\Stable_\Psi$ constructed, and its basic properties understood, we now move towards a thorough study of local duality for comodules. We will first construct an abelian category of $I$-power torsion comodules, a full subcategory of $\Comod_\Psi$. This does not appear to have been studied before (although several similar results have been obtained by Sitte~\cite{sitte2014local} via a different approach), but we will see that it is very similar to the module case considered in \Cref{sec:modules}. In fact it turns out the torsion functor commutes with the forgetful functor to $A$-modules, even in the derived sense, see~\Cref{lem:comodtorsionforget}. 

For $I$ a finitely generated regular invariant ideal of $A$, we then study the duality context $(\Stable_\Psi,A/I)$. Our abstract local duality theory provides us with torsion, local, and completion functors and categories. 

We then move on to identifying the torsion functor explicitly. Under some mild conditions on the ring $A$ and the ideal $I$ we see that it takes a form analogous to that considered in the module case, namely $\Gamma_I(M) \simeq \colim_k \iHom_\Psi(A/I^k,M)$. This gives an identification of the completion functor as $\Lambda_I(M) \simeq \psilim( A/I^k \otimes_A M)$. However, (derived) comodule limits are usually more difficult to compute than comodule colimits. 

We finish this section by using Naumann's theorem on the relationship between flat Hopf algebroids and certain algebraic stacks, to transport our results on local duality for comodules to those for a suitable stable category of quasi-coherent sheaves on such stacks. 

\subsection{The abelian category of \texorpdfstring{$I$}{I}-power torsion comodules}\label{sec:abeliantorsioncomod}

Before we study local duality for $\Stable_\Psi$, we first study the $I$-power torsion functor for the abelian category $\Comod_\Psi$. As we will see, the torsion functor we construct in this section can be obtained, at least under suitable conditions, from the homology of the torsion functor constructed abstractly using local duality.
\begin{rem}
	Recall that we write $T_{I}^{A}$ for the $I$-power torsion functor for $A$-modules, to distinguish it from the torsion functor $T_I^\Psi$ for $\Psi$-comodules, which will be constructed below. 
\end{rem}
 Let $I$ be a finitely generated ideal in $A$, so that $A/I$ is compact, and assume additionally that the ideal $I$ is invariant, i.e., that for any $M \in \Comod_\Psi$, $IM$ is a sub-comodule of $M$. We note the following.
\begin{lem}\label{lem:invariantideals}
If $I$ is an invariant ideal, then so is $I^k$ for any $k \ge 0$. 	
\end{lem}
\begin{proof}
	The condition that $I$ is an invariant ideal is equivalent to $\eta_L(I) \subseteq \eta_R(I)\Psi$. It follows then that $\eta_L(I^k) \subseteq \eta_R(I^k)\Psi$, so $I^k$ is invariant. 
\end{proof}

Recall from \eqref{eq:abeliantorsfunctorcolimitdescription} that the $I$-power torsion functor for modules can be identified with
\begin{equation}\label{eq:toramod}
T_I^A(M) \cong \colim_k \Hom_A(A/I^k,M),
\end{equation}
and that we can define a full subcategory $\tMod{A}$ of $\Mod_A$ consisting of those $A$-modules such that $T_I^A(M) \cong M$. 
\begin{defn}
  Let $M$ be a $\Psi$-comodule. Say that $M$ is $I$-power torsion if the underlying $A$-module of $M$ is $I$-power torsion.  
\end{defn}
We will write $\tComod$ for the full subcategory of $I$-power torsion comodules and $\iota$ for the inclusion functor $\tComod \to \Comod_\Psi$.  

\begin{lem}\label{lem:constructiontpsi}
	There exists a right adjoint $T_I^\Psi$ to the inclusion functor $\iota\colon\tComod \to \Comod_\Psi$ given by 
	\[
T_I^\Psi(M) \cong \colim_k \iHom_\Psi(A/I^k,M)
	\]
for $M \in \Comod_{\Psi}$.
\end{lem}
\begin{proof}
Following Hovey~\cite[Sec.~1.2]{hovey_htptheory}, write a comodule $M$ as a kernel of a map of extended comodules
	\[
M \xr{\psi} \Psi \otimes M \xr{ \psi p} \Psi \otimes N,
	\]
where $p\colon\Psi \otimes M \to N$ is the cokernel of $\psi$. It is easy to check via adjointness that we must have 
\[
\xymatrix{\Psi \otimes T_{I}^A(M) \ar[r]^-{\simeq} & T_I^\Psi(\Psi \otimes M).}
\]
Given a map of extended comodules $\Psi \otimes M \to \Psi \otimes N$ we define $T_I^\Psi(f)$ as the composite
\[
\begin{split}
\Psi \otimes T_I^A(M) \xr{\Delta \otimes 1} \Psi \otimes \Psi \otimes T_I^A(M) \xr{1 \otimes \alpha} \Psi \otimes T_I^A(\Psi \otimes M) \\
\xr{1 \otimes T_I^A(f)} \Psi \otimes T_I^A(\Psi \otimes N) \xr{1 \otimes T_I^A(\epsilon \otimes 1)} \Psi \otimes_A T_I^A(N),
\end{split}
\] 
where $\alpha$ is the natural isomorphism. 

One can check that this defines an adjoint to $\iota$ on the full subcategory of extended comodules. We then have no choice but to define $T_I^\Psi(M)$ to be the kernel of $\Psi \otimes T_I^A(M) \to \Psi \otimes T_I^A(N)$. 

Then, using~\eqref{eq:ihomextended} and the fact that colimits commute with the tensor product, we have
\[
\Psi \otimes \colim_k \Hom_A(A/I^k,M) \cong \colim_k \iHom_\Psi(A/I^k, \Psi \otimes M), 
\] 
and so the kernel, $T_I^\Psi(M)$, can be identified with
\[
\colim_k \iHom_\Psi(A/I^k,M).
\]
One can easily check that this is right adjoint to the inclusion functor, as required. 
\end{proof}

\begin{defn}
We call the functor $T_I^\Psi\colon \Comod_{\Psi} \to \tComod$ constructed in \Cref{lem:constructiontpsi} the $I$-power torsion functor on $\Comod_{\Psi}$. If the Hopf algebroid $(A,\Psi)$ is clear from context, the superscript $\Psi$ will be omitted from the notation.
\end{defn}

Since the underlying $A$-module of $T_I^\Psi(M)$ is $I$-power torsion, an obvious guess is that it is just given by $T_I^A(M)$. This turns out to be the case. 
\begin{cor}\label{cor:forgettorsion}
	There is an isomorphism of $A$-modules $\epsilon_*T_I^\Psi(M) \cong T_I^A(\epsilon_*M)$.
\end{cor}
\begin{proof}
	Since $A/I^k$ is finitely presented, this follows from Part 2 of~\Cref{prop:comoduleprop} and the fact that $\epsilon_*$ is exact, and so commutes with colimits. 
\end{proof}

There is an alternative description of $T_I^\Psi$, just as in the module case. Let $M \in \Comod_\Psi$ and define a functor $\widetilde T_{I}^{\Psi}$ on $\Comod_{\Psi}$ by
\begin{equation}\label{eq:itorsion}
\widetilde T_{I}^{\Psi}(M) = \{ m \in M | \text{ there exists } n \in \N\colon I^n m = 0\}.
\end{equation}

\begin{lem}\label{lem:torsionfunctorcomod}
	There is a natural isomorphism $T_I^\Psi(M) \xr{\simeq} \widetilde T_{I}^{\Psi}(M)$ of $\Psi$-comodules. 
\end{lem}
\begin{proof}
Consider the comodule $\iHom_\Psi(A/I^k,M)$. By \Cref{prop:comoduleprop} this is equivalent to $\Hom_A(A/I^k,M)$ as an $A$-module, with structure map arising as the composite
\[
\Hom_A(A/I^k,M) \xr{\psi_M^\ast} \Hom_A(A/I^k,\Psi \otimes M) \cong \Psi \otimes \Hom_A(A/I^k,M).
\] 
We define a morphism
\[
\xymatrix@R=0.3em{
	\iHom_\Psi(A/I^k,M) \ar[r] & \{ m \in M | I^km = 0\} \subseteq \widetilde T_I^\Psi M .\\ 
f \ar@{|->}[r] & f(1). 
}
\]
One can check that this is a morphism of comodules and, since it is obviously an isomorphism after applying $\epsilon_*$, it is an isomorphism by~\Cref{lem:epsilonconservative}. These isomorphisms form a directed system which induces the claimed isomorphism $T_I^\Psi(M) \xr{\simeq} \widetilde T_{I}^{\Psi}(M)$. 
\end{proof}

Using the fact that $\Mod_A^{I-\text{tors}}$ is an abelian category, it is standard to check that the same is true for $\tComod$; this is just the same argument as in~\cite[Thm.~A1.1.3]{ravenel_86}. In fact, $\tComod$ is Grothendieck abelian.

\begin{prop}
The category $\tComod$ of $I$-power torsion comodules over a flat Hopf algebroid $(A,\Psi)$ is a Grothendieck abelian category.  
\end{prop}
\begin{proof}
We use the following result~\cite[Lem.~11.2]{hasimoto}: If $\mathcal{A}$ is a cocomplete  abelian category, and $\mathcal{B}$ is a Grothendieck category such that there is an adjunction 
  \[
\xymatrix{F\colon\mathcal{A} \ar@<0.5ex>[r] & \mathcal{B}\colon G \ar@<0.5ex>[l]}
  \]
  with the left adjoint $F$ being faithful exact, and $G$ preserving filtered colimits, then $\mathcal{A}$ is also Grothendieck. We want to apply this result to the case where $\mathcal{A}$ is $\Comod_\Psi^{I-\text{tors}}$ and $\mathcal{B}$ is $\Comod_{\Psi}$, with $F$ and $G$ the inclusion and torsion functors respectively, i.e., to the following:
  \[
\xymatrix{\iota_{\mathrm{tors}}\colon \Comod_\Psi^{I-\text{tors}} \ar@<0.5ex>[r] & \Comod_{\Psi}\colon T_I^{\Psi}. \ar@<0.5ex>[l]}
  \]
Since colimits in $\Comod_\Psi^{I-\text{tors}}$ can be computed in $\Comod_{\Psi}$, we see that $\Comod_\Psi^{I-\text{tors}}$ is cocomplete. It is clear that $\iota_{\mathrm{tors}}$ is faithful and exact, and \Cref{lem:constructiontpsi} implies that $T_I^{\Psi}$ preserves colimits. Thus we can apply the stated result to conclude that $\Comod_\Psi^{I-\text{tors}}$ is also a Grothendieck category. 
\end{proof}

It follows from \Cref{cor:forgettorsion} that we have a commutative diagram of adjoint functors
\[
\xymatrix@C=2cm{
  \Mod_A  \ar@<0.5ex>[r]^-{\epsilon^*} \ar@<0.5ex>[d]^{T_I^A} & \ar@<0.5ex>[d]^{T_I^\Psi} \Comod_\Psi \ar@<0.5ex>[l]^-{\epsilon_*}\\
  \Mod_A^{I-\text{tors}} \ar@<0.5ex>[r]^-{\epsilon^*} \ar@<0.5ex>@{^{(}->}[u]& \tComod. \ar@<0.5ex>[l]^-{\epsilon_*} \ar@<0.5ex>@{^{(}->}[u]
}
\]
The bottom horizontal functors are the cofree/forgetful adjunction: Given $M \in \tMod{A}$ it is not hard to check that $\Psi \otimes_A M \in \tComod$, using the description of $\iHom_\Psi(-,-)$ on extended $\Psi$-comodules, see~\eqref{eq:ihomextended}. 

The functor $T_I^\Psi$ is evidently left exact, and so has right derived functors $\mathbf{R}^sT_I^\Psi = H^s\mathbf{R}T_I^\Psi$ for $s \ge 0$. Since colimits are exact in $\Comod_\Psi$, a simple argument with the Grothendieck spectral sequence shows that 
\begin{equation}\label{eq:derivedtorcomodules}
\mathbf{R}^sT_I^\Psi(-) \cong \colim_k \uExt^s_\Psi(A/I^k,-).
\end{equation}

We have seen previously that the underlying $A$-module of $T^\Psi_I(M)$ is just the torsion functor applied to the underlying $A$-module. We claim that the underlying $A$-modules of the derived functors of $T^\Psi_I$ are also just the derived functors of torsion in $A$-modules. 
\begin{lem}\label{lem:comodtorsionforget}
For $s \ge 0$ there is an equivalence
	\[
\epsilon_*\mathbf{R}^sT_I^\Psi(-) \cong \mathbf{R}^sT_I^A(\epsilon_*(-)) \cong \colim_k \Ext^s_A(\epsilon_*A/I^k,\epsilon_*(-)). 
	\]
\end{lem}
\begin{proof}
	Using~\Cref{lem:derivedhomcohom}, the argument is analogous to the one in~\Cref{cor:forgettorsion}.
\end{proof}

\subsection{Local duality for comodules}

We now wish to construct local cohomology and local homology functors satisfying local duality for comodules; to be more precise, we consider the local duality context $(\Stable_{\Psi},A/I)$ for a flat Hopf algebroid $(A,\Psi)$ and $I \subseteq A$ a finitely generated invariant ideal. This appears to require some conditions on the ideal $I$; in the case of modules we could weaken this condition by using the Dwyer--Greenlees Koszul object as a substitute for $A/I$, but we do not have this in the comodule case. Thus, we need an additional regularity condition.  In what follows, we remind the reader that $\cG$ denotes the set of dualizable $\Psi$-comodules. 
\begin{lem}\label{lem:AIcompact}
	Suppose $I$ is generated by a finite invariant regular sequence $(x_1,\ldots,x_n)$, then $A/I^k$ is compact and dualizable in $\Stable_\Psi$ for all $k \ge 0$.
\end{lem}
\begin{proof}
By construction, the image of $M \in \Comod_\Psi$ in $\Stable_\Psi$ is compact if $M$ is in the thick subcategory generated by $\cG$. Of course, since $A \in \cG$, it is enough to check that $M \in \Thick_\Psi(A)$, which for $A/I$ is shown in~\cite[Lem.~6.5]{hovey_htptheory} using the short exact sequences of comodules
\begin{equation}\label{eq:sesamodi}
0 \to A/(x_1,\ldots,x_{i-1}) \xr{x_i} A/(x_1,\ldots,x_{i-1}) \to A/(x_1,\ldots,x_i) \to 0
\end{equation}
and induction. 

Since $I$ is regular, there is an isomorphism of $A$-modules~\cite[Thm.~1.1.8]{cmrings}
\[
\bigoplus_{k \ge 0} I^k/I^{k+1} \cong A/I[x_1,\dots,x_n];
\]
as $I$ is finitely generated, this shows that each $I^k/I^{k+1}$ is finitely generated and projective over $A/I$. Because $A/I$ is compact in $\Stable_{\Psi}$, and is a finitely presented $A$-module, it follows that $I^k/I^{k-1}$ is also compact in $\Stable_{\Psi}$. The claim that $A/I^k$ is compact then follows by induction on the fiber sequences induced by the short exact sequences of comodules 
\[
0 \to I^{k-1}/I^k \to A/I^k \to A/I^{k-1} \to 0.
\]
To see that $A/I^k$ is dualizable we observe that any non-zero object in $\Thick_\Psi(A)$ 
is dualizable, see \Cref{lem:algcompdual} and \Cref{cor:wgendualizable}. 
\end{proof}

We will consider the local duality context $(\Stable_\Psi,A/I)$; we start with the following definition.

\begin{defn}
Let $I \subseteq A$ be a finitely generated ideal, generated by an invariant regular sequence. Define the category $\Stable_\Psi^{I-\mathrm{tors}}$ of $I$-torsion $\Psi$-comodules as
\[\Stable_\Psi^{I-\mathrm{tors}} = \Locid{\Stable_\Psi}(A/I),\]
the localizing ideal in $\Stable_\Psi$ generated by the compact object $A/I$. The inclusion of the category $\Stable_\Psi^{I-\text{tors}}$ of $I$-torsion $\Psi$-comodules into $\Stable_\Psi$ will be denoted $\iota_{\text{tors}}$. 
\end{defn}
\color{black}
By \Cref{prop:indproperties}, $\Stable_\Psi^{I-\mathrm{tors}}$ is a stable category compactly generated by $A/I \otimes \mathcal{G}$, so $\iota_{\text{tors}}$ admits a right adjoint $\Gamma_I$ by \Cref{prop:indrightadjoint}, 
\[
\xymatrix{\iota_{\mathrm{tors}}\colon \Stable_\Psi^{I-\mathrm{tors}} \ar@<0.5ex>[r] & \Stable_\Psi\colon \Gamma_I.\ar@<0.5ex>[l]}
\]
Therefore, we can apply the results of Section~\ref{sec:ald} to obtain localization and completion adjunctions 
\[\xymatrix{\Stable_\Psi^{I-\mathrm{loc}} \ar@<0.5ex>[r]^-{\iota_{\mathrm{loc}}} & \Stable_\Psi \ar@<0.5ex>[l]^-{L_I} \ar@<0.5ex>[r]^-{\Lambda^I} & \Stable_\Psi^{I-\mathrm{comp}} \ar@<0.5ex>[l]^-{\iota_{\mathrm{comp}}}
}\]
with respect to the ideal $I$, as well as a version of local duality.

We are now in a position to state our version of local duality for comodules, which follows from~\Cref{thm:hps} and \Cref{cor:wgendualizable} by specializing to the local duality context $(\Stable_{\Psi},A/I)$. 
\begin{thm}\label{thm:localdualitystablepsi}
Let $(A,\Psi)$ be a flat Hopf algebroid and let $I \subseteq A$ be a finitely generated invariant regular ideal. 
\begin{enumerate}
 \item The functors $\Gamma_I$ and $L_I$ are smashing colocalization and localization functors and $\Lambda^I$ is the localization functor with category of acyclics given by $\Stable_\Psi^{I-\mathrm{loc}}$. Moreover, $\Gamma_I$ and $\Lambda^I$ satisfy $\Lambda^I \circ \Gamma_I \simeq \Lambda^I$ and $\Gamma_I \circ \Lambda^I \simeq \Gamma_I$ and they induce mutually inverse equivalences
 \[\xymatrix{\Stable_\Psi^{I-\mathrm{comp}} \ar@<1ex>[r]^{\Gamma_I}_{\sim} & \Stable_\Psi^{I-\mathrm{tors}}. \ar@<1ex>[l]^{\Lambda^I}}\]
 \item For $M,N \in \Stable_\Psi$, there is an equivalence
 \[\xymatrix{\iHom_\Psi(\Gamma_IM,N) \ar[r]^{\sim} & \iHom_\Psi(M,\Lambda^IN),}\]
 natural in each variable.
 \end{enumerate}
\end{thm}
As usual, we can represent this by the following diagram of stable categories
  \begin{equation}\label{eq:derivedcomod}
\xymatrix{& \Stable_\Psi^{I-\textrm{loc}} \ar@<0.5ex>[d] \ar@{-->}@/^1.5pc/[ddr] \\
& \Stable_\Psi \ar@<0.5ex>[u]^{L_I} \ar@<0.5ex>[ld]^{\G_I} \ar@<0.5ex>[rd]^{\Lambda^I} \\
\Stable_\Psi^{I-\textrm{tors}} \ar@<0.5ex>[ru] \ar[rr]_{\sim} \ar@{-->}@/^1.5pc/[ruu] & & \Stable_\Psi^{I-\textrm{comp}}. \ar@<0.5ex>[lu]}
\end{equation}

\begin{rem}
 By~\Cref{lem:inddiscrete}, for a discrete Hopf algebroid $(A,A)$, there is an equivalence $\Stable_A \simeq \D_A$ and it follows that $\Stable_A^{I-\mathrm{tors}} \simeq \D_A^{I-\mathrm{tors}}$, and similarly for the local and complete categories. Therefore, local duality for $A$-modules is a special case of local duality for comodules (although we require stronger conditions on the ideal in the comodule case). 
\end{rem}

\subsection{Compatibility with the discrete case}
We have seen in~\Cref{cor:forgettorsion,lem:comodtorsionforget} that in the abelian case, the torsion functor $T_I^\Psi$ (and its derived functors $\mathbf{R}^sT_I^\Psi$) are compatible with the forgetful functor, in the sense that there is an isomorphism $\epsilon_*\mathbf{R}^sT_I^\Psi(M) \cong \mathbf{R}^sT_I^A(\epsilon_*M)$ for any $\Psi$-comodule $M$ and $s \ge 0$. Of course, we can ask a similar question for the derived torsion functor. In fact, we shall see that both the torsion and local functors are compatible with the forgetful functor to $\mathcal{D}_A$; this is true for abstract reasons, and does not rely on an explicit identification of the torsion functor.  The main result of this section,~\Cref{prop:comparison_Stable-Derivedcat}, shows even more, namely that the torsion and local functors are compatible with both the forgetful functor $\epsilon_*$ and its right adjoint $\epsilon^\ast$. 
\begin{rem}
 	In this subsection we will write $\Gamma_I^\Psi$ and $\Gamma_I^A$ to distinguish the torsion functors constructed on $\Stable_\Psi$ and $\mathcal{D}_A$, respectively, and similarly for $L_I^\Psi$ and $L_I^A$.  
 \end{rem} 
By construction, the forgetful functor $\epsilon_*$ fits into a commutative diagram
\[
\xymatrix{\Stable_{\Psi}^{I-\mathrm{tors}} \ar@{^{(}->}[rrr]^-{\iota_{\mathrm{tors}}} \ar@{-->}[dd]_{\epsilon_*} &&& \Stable_{\Psi} \ar[dd]^{\epsilon_*} \\
& \Thick_{\Psi}(A/I \otimes \cG) \ar@{^{(}->}[r] \ar[lu]_{j} \ar[ld] & \Thick_{\Psi}(\cG) \ar[ru]^j \ar[rd]\\
\D_A^{I-\mathrm{tors}} \ar@{^{(}->}[rrr]_{\iota_{\mathrm{tors}}} &&& \D_A.}
\]
Using \Cref{prop:indsymmmon}, it follows that $\epsilon_*$ restricts to a colimit preserving and symmetric monoidal functor $\Stable_{\Psi}^{I-\mathrm{tors}} \to \D_A^{I-\mathrm{tors}}$ which we will also call $\epsilon_*$. The tensor product in $\Stable_{\Psi}^{I-\mathrm{tors}}$ ($\D_A^{I-\mathrm{tors}}$) agrees with the one in $\Stable_\Psi$ ($\D_A$); the unit is given by $\Gamma^\Psi_I(A)$ ($\Gamma^A_I(A)$).
 
\begin{lem}\label{lem:stableforget}
	For any $M \in \Stable_{\Psi}$ there is an equivalence in $\cD_A$
	\[
\epsilon_* \Gamma^\Psi_I(M) \simeq \Gamma^A_I(\epsilon_*M). 
	\]
\end{lem}
\begin{proof}
	Since $\Gamma^\Psi_I$ is smashing and $\epsilon_*$ preserves colimits, it suffices to check the claim for $M=A$. But $\epsilon_*$ is symmetric monoidal, hence preserves the unit object, so $\epsilon_* \Gamma^\Psi_I(A) \simeq \Gamma^A_I(\epsilon_*A)$.
\end{proof}

Similarly, the next lemma shows that $\epsilon^*(-) = \Psi \otimes_A -$ is compatible with the torsion functors on $\D_A$ and $\Stable_{\Psi}$.

\begin{lem}\label{cor:torexchange}
If $I$ is generated by a finite regular invariant sequence and $N$ is in $ \cD_A$, then there is an equivalence
	\[
\Gamma^\Psi_I(\Psi \otimes N) \simeq \Psi \otimes \Gamma^A_I(N),
	\]
	where $\Psi \otimes \Gamma^A_I(N)$ has the extended comodule structure. 
\end{lem}

\begin{proof}
	By adjunction, for any $M \in \Stable_{\Psi}^{I-\mathrm{tors}}$ we have
\[
\begin{split}
\Hom_{\Psi}(M,\Gamma^\Psi_I(\Psi \otimes N)) & \simeq \Hom_{\Psi}(M,\Psi \otimes N) \\
& \simeq \Hom_{A}(\epsilon_*M, N) \\
& \simeq \Hom_{A}(\epsilon_*M,\Gamma^A_I N) \\
& \simeq \Hom_{\Psi}(M,\Psi \otimes \Gamma^A_I N),
\end{split}
\]
so the Yoneda lemma implies the claim. 	
\end{proof}

\begin{prop}\label{prop:comparison_Stable-Derivedcat}
	The adjoint pair $(\epsilon_*,\epsilon^*)$ restricts to the torsion and local subcategories. More precisely, if $I$ is an ideal generated by a regular invariant  sequence, then there is a commutative\footnote{Here commutative means that all the 8 possible ways the diagram can commute, do indeed do so.} diagram of adjoint functors
	\[
\xymatrix{
\Stable_\Psi^{I-\mathrm{tors}} \ar@<0.5ex>[r]^-{\iota_{\mathrm{tors}}} \ar@<0.5ex>[d]^-{\epsilon_*} & \ar@<0.5ex>[d]^-{\epsilon_*}\ar@<0.5ex>[l]^-{\Gamma^\Psi_I} \Stable_\Psi \ar@<0.5ex>[r]^-{L_I^\Psi} & \ar@<0.5ex>[l]^{\iota_{\mathrm{loc}}}\Stable_{\Psi}^{I-\mathrm{loc}}\ar@<0.5ex>[d]^-{\epsilon_*} \\
\D_A^{I-\mathrm{tors}} \ar@<0.5ex>[r]^{\iota_{\mathrm{tors}}} \ar@<0.5ex>[u]^-{\epsilon^*} &\ar@<0.5ex>[u]^-{\epsilon^*} \ar@<0.5ex>[l]^-{\Gamma^A_I} \D_A \ar@<0.5ex>^-{L_I^A}[r]& \ar@<0.5ex>[l]^{\iota_{\mathrm{loc}}}\D_A^{I-\mathrm{loc}}\ar@<0.5ex>[u]^-{\epsilon^*}.
}
	\]

\end{prop}
\begin{proof}
	We have already seen above that $\epsilon_*$ restricts to a functor on the torsion categories. To show that the right adjoint $\epsilon^*$ restricts to $\cD_A^{I-\text{tors}}$, we note the following: if $f$ is an equivalence in $\cD_A$, then $\Psi \otimes f$ is an equivalence in $\Stable_\Psi$ by~\Cref{lem:derivedforget}. Suppose then that $M \in \cD_A^{I-\text{tors}}$, so that there is a quasi-isomorphism
	\[
\xymatrix{\Gamma^A_I(M) \ar[r]^-{\sim} & M.}
	\]
	In $\Stable_\Psi$, there is an equivalence
	\[
\xymatrix{\Psi \otimes \Gamma^A_I(M) \ar[r]^-{\sim} & \Psi \otimes M.}
	\]
By~\Cref{cor:torexchange} this gives rise to an equivalence $\Gamma^\Psi_I(\Psi \otimes M) \simeq \Psi \otimes M$. So, for $M \in \cD_A^{I-\text{tors}}$ we do indeed have $\epsilon^*M = \Psi \otimes M \in \Stable_\Psi^{I-\text{tors}}$. The pair $(\epsilon_*,\epsilon^*)$ is evidently adjoint. 

For the local categories, note that for $N \in \cD_A$ we have a morphism of cofiber sequences arising from~\Cref{thm:hps}
\[
\xymatrix{
	\Gamma^\Psi_I(\Psi \otimes N) \ar[r] \ar[d] & \Psi \otimes N \ar[r] \ar[d]_= & L^\Psi_I(\Psi \otimes N) \ar[d] \\
	\Psi \otimes \Gamma^A_I(N) \ar[r] & \Psi \otimes N \ar[r] & \Psi \otimes L^A_I(N). 
}
\]
By~\Cref{cor:torexchange} the leftmost arrow is an equivalence; since the middle arrow is also clearly an equivalence, so is the rightmost arrow. 

Now assume that $N \in \cD_A^{I-\text{loc}}$. We would like to show that $\epsilon^*(N) \simeq \Psi \otimes N \in \Stable_\Psi^{I-\text{loc}}$. Once again, an equivalence $N \simeq L^A_I(N)$ in $\cD_A$ gives rise to an equivalence $\Psi \otimes N \simeq \Psi \otimes L^A_IN$ in $\Stable_\Psi$. Thus, we see that $\Psi \otimes N \simeq \Psi \otimes L^A_IN \simeq L^\Psi_I(\Psi \otimes N)$, so that $\Psi \otimes N \in \Stable_\Psi^{I-\text{loc}}$, as required.  

Consider the diagram
\[
\xymatrix{
	\epsilon_*\Gamma^\Psi_IM \ar[r] \ar[d] & \epsilon_* M \ar[r] \ar[d]_{=} & \epsilon_*L^\Psi_IM \ar[d] \\
	\Gamma^A_I(\epsilon_*M) \ar[r] & \epsilon_*M \ar[r] & L^A_I(\epsilon_*M). 
}
\]
By~\Cref{lem:stableforget} the left-most vertical arrow is an equivalence, and hence there is an equivalence $\epsilon_*L^\Psi_IM \simeq L^A_I(\epsilon_*M)$; in particular, if $M \in \Stable_\Psi^{I-\text{loc}}$, then $\epsilon_*M \simeq L^A_I(\epsilon_*M)$, so that $\epsilon_*M \in \cD_A^{I-\text{loc}}$. Once again, the pair $(\epsilon_*,\epsilon^*)$ is evidently adjoint. 
\end{proof}

\subsection{The torsion functor}\label{sec:torsionfunctor}

If $(A,\Psi)$ is an appropriate Hopf algebroid, \Cref{lem:stableforget} can be lifted to a characterization of the torsion functor on $\Stable_{\Psi}$, as we will prove below. By \Cref{lem:AIcompact}, there exists an inverse sequence under $A$
\[
\xymatrix{\ldots \ar[r] & A/I^3 \ar[r] & A/I^2 \ar[r] & A/I}
\]
of compact and hence dualizable comodules in $\Stable_{\Psi}$, which upon dualizing yields a sequence
\begin{equation}\label{eq:koszulsequence}
\xymatrix{DA/I \ar[r] & DA/I^2 \ar[r] & DA/I^3 \ar[r] & \ldots}
\end{equation}
over $DA \simeq A$.

\begin{prop}\label{prop:identtorsfun}
Suppose $(A,\Psi)$ is a flat Hopf algebroid with $A$ Noetherian and let $I$ be a finitely generated ideal of $A$ generated by a regular invariant sequence, then the functor $\Gamma_I$ constructed abstractly above is naturally equivalent to the functor $\Gamma_I'\colon \Stable_{\Psi} \to \Stable_{\Psi}^{I-\text{tors}}$ given by 
\[M \mapsto \colim_k \iExt_{\Psi}(A/I^{k},M).\]
\end{prop}
\begin{proof}
First, observe that $\Gamma_I'$ takes values in the category of $I$-torsion comodules: Indeed, by compactness of $A/I^k$ and the fact that $\Stable_{\Psi}^{I-\text{tors}}$ is closed under colimits, it suffices to see that $\iExt_{\Psi}(A/I^k,A)$ is $I$-torsion for all $k$, which is clear. Secondly, we note that $\Gamma_I'$ is smashing:
\begin{align*}
\Gamma_I'(M) & \simeq \colim_k \iExt_{\Psi}(A/I^k,M)\\
& \simeq M \otimes \colim_k \iExt_{\Psi}(A/I^k,A) \\
& \simeq M \otimes \Gamma_I' (A)
\end{align*}
for any $M \in \Stable_{\Psi}$, as $A/I^k$ is compact. Hence the conditions of~\Cref{lem:torsionred} are satisfied, and it is enough to show that there is an equivalence
\begin{equation}\label{eq:koszulequiv2}
\xymatrix{\Gamma'(A) \otimes A/I \ar[r]_-{\sim}^-{f} & A/I.}
\end{equation}

We note that there is such a natural map: as in \eqref{eq:koszulsequence}, each $DA/I^k$ maps naturally (and compatibility) to $A$, and hence there is a natural morphism $\Gamma'(A) \simeq \colim_k DA/I^k \to A$. Tensoring with $A/I$ gives the desired morphism $f$.  

Recall that the functor $\omega\colon \Stable_{\Psi} \to \cD_{\Psi}$ factors as 
\[
\xymatrix{\Stable_{\Psi} \ar[r]^-{\theta} & \Dinjpsi \ar[r]^-{\omega'} & \cD_{\Psi},}
\]
where $\theta$ is fully faithful by~\Cref{lem:thetaff} and $\omega'$ induces an equivalence $(\cD_{\Psi})_{\le 0} \simeq (\Dinjpsi)_{\le 0}$ by~\cite[Rems.~1.3.6.3 and~1.3.6.4]{ha}. By construction, $\Gamma'(A) \simeq \colim_k DA/I^k \in (\Stable_{\Psi})_{\le 0}$, so it suffices to check that~\eqref{eq:koszulequiv2} is a quasi-isomorphism. Since $\epsilon_*\colon\Comod_\Psi \to \Mod_A$ is conservative we can reduce further to showing that $\epsilon_*H_*(f)$ is an equivalence of $A$-modules. Applying~\Cref{lem:torsionred} to the discrete Hopf algebroid $(A,A)$ we see that it is enough to check that the torsion functor on $\cD_A$ takes the form $\colim_k \iHom_A(A/I^k,M)$. The latter is proven in \cref{prop:Gamma=colimHom}.
\end{proof}

\begin{cor}\label{cor:stablecomodcompletion}
For $A$ and $I$ as above, the completion functor $\Lambda_I(-)$ is given by
	\[
	\xymatrix@C=0.1em{
\Lambda_I(M) \simeq  \psilim A/I^k \otimes M,
}
	\]
	for all $M \in \Stable_\Psi$. 
\end{cor}
\begin{proof}
	Following the proof of~\Cref{cor:completion} we see there is an equivalence
	\[
		\xymatrix@C=0.1em{
\Lambda_I M \simeq \psilim D^2(A/I^k) \otimes M,
}
	\]
	but since $A/I^k$ is dualizable by~\Cref{lem:AIcompact}, $D^2(A/I^k) \simeq A/I^k$. 
\end{proof}
If we are only interested in the local cohomology comodules, we can ease the assumptions on the ring $A$; but see also~\Cref{rem:krausegeneral}. 

\begin{prop}\label{lem:partialdertorsion}
Let $(A,\Psi)$ be a flat Hopf algebroid and $I$ a finitely generated ideal generated by an invariant regular sequence, then the natural map
\[
\xymatrix{\colim_k \uExt_{\Psi}^*(A/I^k,M) \ar[r]^-{\simeq} & \sh_{-\ast}\Gamma_IM}
\]
is an isomorphism for all $M \in \Stable_{\Psi}$. 
\end{prop}
\begin{proof}
Suppose $M \in \Stable_{\Psi}$. Using the equivalences
\[
\Hom_{\Psi}(M,A/I^k\otimes M) \simeq \Hom_{\Psi}(\iHom_{\Psi}(A/I^k,M),\Gamma_I^{\Psi}M)
\]
we can construct a natural map 
\[
\xymatrix{f\colon \colim_k \iHom_{\Psi}(A/I^k,M) \ar[r] & \Gamma_I^{\Psi}(M)}
\]
in $\Stable_{\Psi}$. We claim that $\omega(f)$ is a quasi-isomorphism. Since $\epsilon_*\colon \D_{\Psi} \to \D_{A}$ is conservative, this follows from the commutative diagram
\[
\xymatrix{\epsilon_*\colim_k \iHom_{\Psi}(A/I^k,M) \ar[r]^-{\epsilon_*(f)} \ar[d]_{\sim} & \epsilon_*\Gamma_I^{\Psi}(M) \ar[d]^{\sim} \\
\colim_k \iHom_{A}(A/I^k,\epsilon_*M) \ar[r]_-{\sim} & \Gamma_I^A\epsilon_*M.}
\]
Here, the vertical maps are quasi-isomorphisms by \Cref{lem:derivedhomcohom} and \Cref{lem:stableforget}, while the bottom quasi-isomorphism uses \cref{prop:Gamma=colimHom}. The proposition is then a consequence of \Cref{lem:derivedhomcohom}. 
\end{proof}

Combining this result with \Cref{lem:comodtorsionforget} shows that the local cohomology functor $\Gamma_I$ is the right derived functor of $T_I^{\Psi}$, in the following sense:

\begin{cor}
For any $M \in \Stable_\Psi$, there is an equivalence $\mathbf{R}T_I^{\Psi}\omega M \simeq \omega \Gamma_IM$.
\end{cor}

For $1 \le k \le n$, let $I_k = (x_1,\ldots,x_k)$ (so that $I_n=I$), and suppose that each $I_k$ is an invariant ideal. 
\begin{defn}\label{defn:stronglyinvariant}
	 We call the ideal $I$ strongly invariant if, for $1 \le k \le n$ and every comodule $M$ which is $I_{k-1}$-torsion as an $A$-module, there is a comodule structure on $x_k^{-1}M$ such that the natural homomorphism $M \to x_k^{-1}M$ is a comodule morphism.
\end{defn}
\begin{exmp}\label{rem:stronglyinvariant}
	As an example, this is satisfied for $BP_*BP$-comodules and the ideal $I_n = (p,v_1,\ldots,v_{n-1})$ for all $n \ge 0$, by~\cite[p.~439]{miller_ravenel_local}. Applying the functor $ - \otimes_{BP_*}E_*$, for $E$ some variant of height $n$ Morava $E$-theory or Johnson--Wilson theory, we see that this is also satisfied for $E_*E$-comodules, and the ideal $I_k = (p,v_1,\ldots,v_{k-1})$ for $0 \le k \le n$. 
\end{exmp}

For $0 \le i \le n$ and $I$ strongly invariant, we construct comodules $A/(x_1^\infty,\ldots,x_i^\infty)$ recursively by first setting $A/(x_0^\infty) = A$, and then defining $A/(x_1^\infty,\ldots,x_{i}^\infty)$ via the short exact sequences 
\begin{equation}\label{eq:sesstabcomodules}
0  \to A/(x_1^\infty,\ldots,x_{i-1}^\infty) \to x_i^{-1} A/(x_1^\infty,\ldots,x_{i-1}^\infty) \to A/(x_1^\infty,\ldots,x_{i}^\infty)\to 0. 
\end{equation} 
\begin{rem}
 We note that the comodules constructed in this manner agree with the modules defined in the same way after applying the forgetful functor. 
\end{rem}

\begin{lem}\label{cor:stronglyinvarianttorsion}

	If the ideal $I$ is finitely generated, strongly invariant, and regular, then 
	\[
\Gamma_I(A) = \Sigma^{-n} A/(x_1^\infty,\ldots,x_n^\infty). 
	\]
\end{lem}
\begin{proof}

	The short exact sequences of~\eqref{eq:sesstabcomodules} give rise to cofiber sequences
	\[
A/(x_1^\infty,\ldots,x_{i-1}^\infty) \to x_i^{-1} A/(x_1^\infty,\ldots,x_{i-1}^\infty) \to A/(x_1^\infty,\ldots,x_{i}^\infty)
	\]
in $\Stable_\Psi$. Since $A/I \otimes x_{i}^{-1}A/(x_1^{\infty},\ldots,x_{i-1}^{\infty}) = 0$, tensoring the cofiber sequence associated to~\eqref{eq:sesstabcomodules} with $A/I$ yields a natural equivalence
\[\xymatrix{A/I \otimes A/(x_1^{\infty},\ldots,x_{i}^{\infty}) \ar[r]^-{\sim} & \Sigma A/I \otimes A/(x_1^{\infty},\ldots,x_{i-1}^{\infty})}\] 
for any $1\le i\le n$. Composing these we see that 
\[
A/(x_1^\infty,\ldots,x_n^\infty) \otimes A/I \simeq \Sigma^{n}A/I, 
\]
so that $\Gamma_I(A) \simeq \Sigma^{-n} A/(x_1^\infty,\ldots,x_n^\infty)$. 
\end{proof}

\subsection{Local duality for algebraic stacks}\label{sec:adamsstacks}

Stacks were introduced as a generalization of schemes with the objective of studying moduli problems. In many cases, they can be thought of as the quotient of a scheme by a group action. In what follows, we will only consider algebraic stacks as defined in \cite[Sec.~2]{naumann_stack_2007}, since they correspond, for a fixed presentation, to flat Hopf algebroids. Note that this is not the standard definition of an algebraic stack, as in \cite[Tag 026N]{stacks-project} or \cite{laumon_champs_2000}, for example. Moreover, there is a notion of quasi-coherent sheaves over a stack and it is here where the connection with comodules arises; in particular, as proven by Naumann in~\cite[Thm.~8]{naumann_stack_2007}, the category of quasi-coherent sheaves over such stacks is equivalent to the category of comodules over the corresponding Hopf algebroid.

In light of this, it is clear that our results for comodules translate easily into a theory of local duality for algebraic stacks. Furthermore, we will see that under some mild extra hypothesis we can identify the torsion functor in a more explicit fashion.  

We now briefly recall the theory of algebraic stacks, following~\cite{naumann_stack_2007}. Let $S$ be an affine scheme and let $\Aff_{/S}$ (or simply $\Aff$ when $S$ is understood) be the category of $S$-schemes. In the following we will only consider $\Aff_{/S}$ together with the \emph{fpqc} topology; a finite family $\{U_i\to U\}$ is an \emph{fpqc} covering in $\Aff_{/S}$ if $\bigsqcup U_i\to U$ is faithfully flat. 

Recall that a \emph{stack} $\fX$ is a category fibered in groupoids such that:
\begin{enumerate}
	\item (``descent of morphisms") For $U\in \Ob(\Aff)$ and $x,y\in \Ob(\fX_U)$ the presheaf
	\[
  		(V\xrightarrow{\phi} U) \mapsto \Hom_{\fX_V}(x|V,y|V),
\]
is a sheaf.
	\item  (``gluing objects") If $\{U_i\to U\}$ is covering in $\Aff$, $x_i\in \Ob(\fX_{U_i})$ and $f_{ji}: x_i|U_i\times_U U_j \to x_j|U_j\times_U U_j$ are isomorphisms satisfying the cocycle condition, then there are $x\in \Ob(\fX_U)$ and isomorphisms $f_i: x|U_i \to x_i$ such that $f_j|U_i\times_U U_j=f_{ji}\circ f_i|U_i\times_U U_j$.
\end{enumerate}

We will focus our attention on \emph{algebraic stacks}, which we now define. 

\begin{defn}
	A stack $\fX$ is algebraic if the diagonal morphism $\fX\to \fX\times \fX$ is representable and affine, and there is an affine scheme $U$ together with a faithfully flat morphism $P\colon U\to \fX$.
\end{defn}

Let $(A,\Psi)$ be a flat Hopf algebroid. Then $(X_0=\Spec(A), X_1=\Spec(\Psi))$ has the structure of a groupoid object in the category of affine schemes. Moreover, $(X_0=\Spec(A), X_1=\Spec(\Psi))$ determines a presheaf of groupoids on $\Aff$, however the corresponding category fibered in groupoids is not in general a stack, as Condition (2) from the definition above does not hold in general. Nonetheless, the inclusion functor from stacks to presheaves of groupoids has a right adjoint called stackification; we thus get a stack $\fX$ associated with $(A,\Psi)$ which is algebraic with a fixed presentation $\Spec(A)\to \fX$. This correspondence is in fact an equivalence, see~\cite[Thm.~8]{naumann_stack_2007}.

\begin{thm}[Naumann]
	There is an equivalence of $2$-categories
	\[
  		\{\text{Algebraic stacks with a fixed presentation $\Spec(A)\to \fX$}\} \simeq \{\text{flat Hopf algebroids $(A,\Psi)$}\}.
\]
\end{thm}

The analogy between Hopf algebroids and algebraic stacks does not end with the result above. There is a corresponding notion of quasi-coherent sheaves on algebraic stacks, see \cite{laumon_champs_2000, sitte2014local}. If we denote the category of quasi-coherent sheaves over $\fX$ by $\QCoh(\fX)$, there is an equivalence of abelian categories
\begin{equation}\label{comod=QCSh}
\xymatrix{\QCoh(\fX) \ar[r]^-{\sim} & \Comod_\Psi,}
\end{equation}
where $(A,\Psi)$ is the Hopf algebroid corresponding to $\fX$ \cite[Sec.~3.4]{naumann_stack_2007}. Of course, under this equivalence the structure sheaf $\cO_{\fX}$ corresponds to the $\Psi$-comodule $A$, and dualizable quasi-coherent sheaves correspond to dualizable $\Psi$-comodules. 

Finally, we recall that an algebraic stack is an Adams stack if it is the stack associated to an Adams Hopf algebroid. These have been characterized as algebraic stacks satisfying the strong resolution property, i.e., algebraic stacks whose category of quasi-coherent sheaves is generated by the dualizable ones \cite{schappi_characterization_2012,hovey_htptheory}. It turns out that in this case the equivalence of~\eqref{comod=QCSh} is symmetric monoidal.

\begin{prop}
If $\fX$ is an Adams stack, then there is a symmetric monoidal equivalence
\[
\xymatrix{\QCoh(\fX) \ar[r]^-{\sim} & \Comod_\Psi}
\]
of abelian categories, where $(A,\Psi)$ is the Hopf algebroid corresponding to $\fX$. 
\end{prop}
\begin{proof}
Sch{\"a}ppi proves~\cite[Thm.~1.6]{schappi_ind-abelian_2012} that an $A$-linear ind-abelian category $\cA$ is weakly Tannakian (as defined in~\cite[Def.~1.5]{schappi_ind-abelian_2012}) if and only if there exists an Adams stack $\fX$ over $A$ and a symmetric monoidal equivalence $\cA\simeq \QCoh^{\omega}(\fX)$, where $\QCoh^{\omega}(\fX) \subset \QCoh(\fX)$ is the subcategory of finitely presented quasi-coherent sheaves. Moreover, if $(A,\Psi)$ is an Adams Hopf algebroid, $\Comod_\Psi$ is an $A$-linear ind-abelian weakly Tannakian category, and we get a symmetric monoidal equivalence $\Comod_\Psi^{\omega}\simeq \QCoh^{\omega}(\fX)$, where $\fX$ is the corresponding Adams stack, see the proof of \cite[Thm.~1.6]{schappi_ind-abelian_2012}. Passing to the categories of ind-objects we thus get an equivalence $\Comod_\Psi\simeq \QCoh(\fX)$ which is still symmetric monoidal as tensor products of ind-objects are defined as filtered colimits of tensor products in the corresponding categories of finitely presented objects.
\end{proof}
 
\begin{rem}
It is worth pointing out that the Adams condition on stacks is not very restrictive, and is satisfied by many interesting stacks originating both in geometry and topology. For instance, in \cite{thomason_equivariant_1987} Thomason proves that several types of quotient stacks satisfy the resolution property, which is equivalent to being an Adams stack under his hypotheses. This work can be used to show that the stacks associated to a flat Hopf algebroid $(A,\Psi)$ where $A$ is a Dedekind ring also satisfy the Adams condition \cite{schappi_characterization_2012}.

Furthermore, several stacks coming from algebraic topology are Adams; they often arise as the stack corresponding to the Hopf algebroid of cooperations for a given ring spectrum. The stack of formal groups and other related substacks \cite{goerss_quasi-coherent_2008}, and the stacks associated to topologically flat cohomology theories \cite{hovey_htptheory} are all such examples. For instance, $MU$, $BP$, $K$ and $H\mathbb{F}_p$ are topologically flat.

\end{rem}

Let us fix an algebraic stack $\fX$. Building upon the symmetric monoidal equivalence in \eqref{comod=QCSh}, we can now translate our theory of local duality for comodules directly to the language of stacks. First, notice that since $\QCoh(\fX)$ is a Grothendieck category, we can consider its derived category $\cD_{\fX}=\cD(\QCoh(\fX))$. Recall that the structure sheaf $\cO_{\fX}$, is given by $\cO(\Spec(A)\to \fX)=\Psi$. In what follows let $\cG_{\fX}$ denote the set of dualizable objects in $\cD_{\fX}$. 

\begin{defn}
We define $\IndCoh{\fX}$, the stable $\infty$-category of quasi-coherent sheaves over $\fX$ as the category of ind-category of the thick subcategory of $\cD_{\fX}$ generated by a set of representative of dualizable objects $\cG_{\fX}$ of $\QCoh(\fX)$ considered as complexes concentrated in degree zero, that is, 
\[
\IndCoh_{\fX}=\Ind(\Thick_{\fX}(\cG_{\fX})).
\]
\end{defn}

Note that as with the stable category of comodules, $\IndCoh_{\fX}$ is a stable category, which is compactly generated by $\cG_{\fX}$. In fact, the definition of $\IndCoh_{\fX}$ implies:
\begin{prop}
	There is an equivalence of stable categories between $\IndCoh_{\fX}$ and $\Stable_\Psi$.
\end{prop}

\begin{proof}
	Since dualizable quasi-coherent shaves correspond to dualizable $\Psi$-comodules in the equivalence above, and both $\IndCoh_{\fX}$ and $\Stable_\Psi$ are generated by $\cG_{\fX}$ and $\cG_{\Psi}$ (the set of dualizable $\Psi$-comodules), respectively, it follows that this equivalence is in fact symmetric monoidal. Observe that this is true regardless of whether the equivalence is symmetric monoidal at the level of abelian categories.
\end{proof}

Consider a closed substack $\fZ$ defined by an ideal sheaf $\cI\subseteq \cO_{\fX}$ corresponding to an invariant ideal $I\subseteq A$, where $P\colon\Spec(A)\to \fX$ is a given presentation. We assume $\cO_{\fX}/\cI$ is a compact object of $\IndCoh_{\fX}$ and define the category $\IndCoh_{\fX}^{{\cI}-\mathrm{tors}}$ of ${\cI}$-torsion quasi-coherent sheaves as
\[
	\IndCoh_{\fX}^{{\cI}-\mathrm{tors}} = \Locid{\IndCoh_{\fX}}(\cO_{\fX}/\cI),
\]
the localizing ideal in $\IndCoh_{\fX}$ generated by the compact object $\cO_{\fX}/\cI$. Thus, the inclusion of $\IndCoh_{\fX}^{\cI-\mathrm{tors}}$ into $\IndCoh_{\fX}$ admits a right adjoint, 
\[
\xymatrix{\Gamma_{\fZ}\colon \IndCoh_{\fX} \ar@<0.5ex>[r] & \IndCoh_{\fX}^{\cI-\mathrm{tors}}.}
\] 

As usual, we get the familiar diagram of adjunctions between stable categories
  \begin{equation}
\xymatrix{& \IndCoh_{\fX}^{\cI-\textrm{loc}} \ar@<0.5ex>[d] \ar@{-->}@/^1.5pc/[ddr] \\
& \IndCoh_{\fX} \ar@<0.5ex>[u]^{L_{\fZ}} \ar@<0.5ex>[ld]^{\G_{\fZ}} \ar@<0.5ex>[rd]^{\Lambda^{\fZ}} \\
\IndCoh_{\fX}^{\cI-\textrm{tors}} \ar@<0.5ex>[ru] \ar[rr]_{\sim} \ar@{-->}@/^1.5pc/[ruu] & & \IndCoh_{\fX}^{{\cI}-\textrm{comp}} \ar@<0.5ex>[lu]}
\end{equation} 
and from the equivalence of $\IndCoh_{\fX}^{\cI-\textrm{tors}}$ and $\IndCoh_{\fX}^{\cI-\textrm{comp}}$ at the bottom, we obtain the following local duality result:

\begin{thm}
Let $\fX$ be an algebraic stack with a fixed presentation $P\colon\Spec(A)\to \fX$. If  $\cF,\cG \in \IndCoh_{\fX}$, there is an equivalence
 \[
 	\xymatrix{\iHom_{\fX}(\Gamma_{\fZ}\cF,\cG) \ar[r]^-{\sim} & \iHom_{\fX}(\cF,\Lambda^{\fZ}\cG),}
 \]
 natural in each variable.
\end{thm}

From \Cref{lem:AIcompact} we see that a sufficient condition for $\cO_{\fX}/\cI$ to be compact is that the ideal $I$ corresponding to $\cI$ is generated by a finite invariant regular sequence. For the next result, we recall the following definition. 
\begin{defn}
	An algebraic stack $\fX$ is called Noetherian if there exists a presentation $\Spec(A) \to \fX$ with $A$ Noetherian. 
\end{defn}

Then, \Cref{prop:identtorsfun} takes the form:

\begin{cor}
Suppose $\cI$ is a Noetherian algebraic stack, locally generated by a finite regular sequence, then we have a natural equivalence
\[
	\Gamma_{\fZ} \cF \simeq \colim_k \iExt_{\fX}(\cO_{\fX}/\cI^{k},\cF).
\]
for all $\cF \in \IndCoh_{\fX}$.
\end{cor}

Moreover, the pair of adjoints $(P^*,P_*)$ corresponds to the forgetful and cofree functors $(\epsilon_*,\epsilon^*)$ on the corresponding category of $\Psi$-comodules \cite{sitte2014local}. It follows from \cref{prop:comparison_Stable-Derivedcat} that the following diagrams of functors commute
\begin{equation}\label{eq:comparison_adjoints}
\xymatrix{
\IndCoh_{\fX}^{\cI-\text{tors}} \ar@<0.5ex>[r]^-{\iota_{\text{tors}}} \ar@<0.5ex>[d]^{P^*} & \ar@<0.5ex>[d]^{P^*}\ar@<0.5ex>[l]^-{\Gamma_{\fZ}} \IndCoh_{\fX} \ar@<0.5ex>[r]^-{L_{\fZ}} & \ar@<0.5ex>[l]^{\iota_{\text{loc}}}\IndCoh_{\fX}^{\cI-\text{loc}}\ar@<0.5ex>[d]^{P^*} \\
\D_{\Spec(A)}^{\cI-\text{tors}} \ar@<0.5ex>[r]^{\iota_{\text{tors}}} \ar@<0.5ex>^{P_*}[u]&\ar@<0.5ex>[u]^{P_*} \ar@<0.5ex>[l]^-{\Gamma_Z} \D_{\Spec(A)} \ar@<0.5ex>^-{L_Z}[r]& \ar@<0.5ex>[l]^{\iota_{\text{loc}}}\D_{\Spec(A)}^{\cI-\text{loc}}\ar@<0.5ex>[u]^{P_*},
}
\end{equation}
where $Z$ is the closed subscheme of $\Spec(A)$ defined by $I$.
\begin{rem}
	We would like to emphasize that local duality for stacks has been studied before in unpublished work of Goerss~\cite{goerss_quasi-coherent_2008}. He proves a local duality theorem for quasi-coherent sheaves over $\fX$, provided that $\fX$ is a quasi-compact and separated stack together with a scale, hypotheses directly inspired and satisfied by the stack of formal groups. His approach differs in two ways from ours: Firstly, Goerss works directly with quasi-coherent sheaves on the stack, rather than an appropriate derived category of such. Secondly, following \cite{local_cohom_schemes} he reduces local duality for stacks to the affine case via a local-to-global argument. Consequently, his theorem applies to examples not covered by our result, and vice versa. 
\end{rem}

\begin{rem}
	Another interesting work in this vein is Sitte's thorough study of local cohomology for quasi-coherent sheaves over an Adams stack \cite{sitte2014local}. While he does not investigate local duality, he gives a comparison of the right derived functors of torsion on the stack with the corresponding derived functors on the affine cover. More precisely, he shows that if $P\colon\Spec(A)\to\fX$ is a presentation for $\fX$, then there is an equivalence of cohomological $\delta$-functors
	\[
	\mathbf{R}_{\QCoh(\fX)} \Gamma_{\fZ}(-) \to  \mathbf{R}_{\Mod(\cO_{\Spec(A)})} \Gamma_{Z}(P^*-),
	\]
	provided $Z=\Spec(A/I)$, where $I$ is generated by a weakly proregular sequence \cite[Thm.~4.4.2]{sitte2014local}; this should be compared to the equivalence $P^*\Gamma_{\fZ}\simeq \Gamma_ZP^*$ coming from the first square in \eqref{eq:comparison_adjoints}.	
	\end{rem}

\section{Local duality in stable homotopy theory}\label{sec:duality-spectra}

In the following sections we describe how local duality manifests itself in stable homotopy theory. We start by describing local duality in the category of $p$-local and $E$-local spectra, where $E = E(n)$ is height $n$ Johnson--Wilson theory. We show that the functors we construct abstractly via our local duality framework agree with  familiar constructions used in chromatic homotopy theory, and that we can give explicit identifications for them, recovering (or indeed extending) results from~\cite{hovey_morava_1999}. We finish by comparing local duality for the category of spectra with local duality in the category of $BP$-module spectra (or $E$-module spectra), showing that they are compatible in a certain sense. 

We will assume that the reader is familiar with some basic concepts from stable homotopy theory, as can be found in~\cite{hovey_morava_1999} for example. 

\subsection{Local duality in stable homotopy theory --- the global case}\label{sec:globalspectra}

Let $\Sp$ be the category of $p$-local spectra, for $p$ a fixed prime which will mostly be omitted from the notation. This is a stable category compactly generated by its tensor unit $S^0$, whose compact objects are precisely the finite spectra. For $n\ge 0$, recall that a finite spectrum $F$ is said to be of type $n$ if $K(n-1)_*(F) = 0$ and $K(n)_*(F) \ne 0$; here, $K(n)$ denotes Morava $K$-theory of height $n$. By the thick subcategory theorem~\cite{orangebook}, the category of finite spectra admits an exhaustive filtration by thick subcategories $\mathcal{C}(n)$ consisting of the finite spectra of type $\ge n$.

Fix an integer $n\ge 0$ and choose a finite spectrum $F(n)$ of type $n$; by the thick subcategory theorem, $\mathcal{C}(n)$ is the thick subcategory generated by $F(n)$ and none of the following constructions will depend on the choice of $F(n)$. Applying \Cref{thm:hps} to the local duality context $(\Sp,F(n))$ yields the diagram on the left
\begin{equation}\label{eq:globalspectradiagram}
\xymatrix{& \Sp^{I_n-\mathrm{loc}} \ar@<0.5ex>[d] \ar@{-->}@/^1.5pc/[ddr] & & & \Sp_{E(n-1)}^f \ar@<0.5ex>[d] \ar@{-->}@/^1.5pc/[ddr] \\
& \Sp \ar@<0.5ex>[u]^{L_{I_n}} \ar@<0.5ex>[ld]^{\Gamma_{I_n}} \ar@<0.5ex>[rd]^{\Lambda^{I_n}} & & & \Sp \ar@<0.5ex>[u]^{L_{n-1}^f} \ar@<0.5ex>[ld]^{C_{n-1}^f} \ar@<0.5ex>[rd]^{L_{F(n)}} \\
\Sp^{I_n-\mathrm{tors}} \ar@<0.5ex>[ru] \ar[rr]_-{\sim} \ar@{-->}@/^1.5pc/[ruu] & & \Sp^{I_n-\mathrm{comp}} \ar@<0.5ex>[lu] & \mathcal{C}_{n-1}^f \ar@<0.5ex>[ru] \ar[rr]_-{\sim} \ar@{-->}@/^1.5pc/[ruu] & & \Sp_{F(n)}, \ar@<0.5ex>[lu]}
\end{equation}
together with the properties summarized in \Cref{thm:hps}; in particular, this introduces the notation we are going to use. As we will see shortly, the categories and functors so constructed are fundamental objects in stable homotopy theory, as displayed in the right diagram above.\footnote{The shift by 1 in the indexing is unfortunate, but consistent with the literature.}  

We start by recalling the existence of appropriate towers of generalized Moore spectra, which play the role of Koszul systems in stable homotopy theory. 

\begin{defn}
A finite spectrum $M=M(n) \in \Sp$ is called a generalized type $n$ Moore spectrum if $M$ is of type $n$ and its $BP$-homology is of the form $BP_*(M) \cong BP_*/(p^{i_0},\ldots,v_{n-1}^{i_{n-1}})$ with $i_j>0$ for all $j$. An inverse system of such spectra is called cofinal if the corresponding sequence of $n$-tuples of integers $(i_0,\ldots,i_{n-1})$ is cofinal in $\N^n$. 
\end{defn}

Using the periodicity theorem as proven by Devinatz, Hopkins, and Smith~\cite{nilpotence1, nilpotence2}, Hovey and Strickland~\cite[Prop.~4.22]{hovey_morava_1999} construct cofinal systems of generalized Moore spectra at all heights. 

\begin{thm}[Devinatz--Hopkins--Smith; Hovey--Strickland]\label{thm:towermoore}
For every $n\ge 0$, there exists a cofinal tower $(M_i(n))_{i \in \N}$ of generalized type $n$ Moore spectra. Moreover, these towers are compatible in the sense that there exists a cofiber sequence
\[
\xymatrix{(M_i(n))_{i \in \N} \ar[r] & (M_i(n))_{i \in \N} \ar[r] & (M_i(n+1))_{i \in \N}}
\]
of towers of spectra for all $n \ge 0$. 
\end{thm}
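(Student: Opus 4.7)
The plan is to proceed by induction on $n$, where the inductive step produces exactly the asserted cofiber sequence of towers. For the base case $n=0$, I would set $M_i(0) = S^0$ for every $i$ with identity bonding maps: this tower is trivially cofinal in $\N^0$, has $BP$-homology $BP_*$, and is self-dual of shift $0$ because $DS^0 \simeq S^0$.

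For the inductive step, suppose $(M_i(n))_{i \in \N}$ has already been constructed with all claimed properties. Since each $M_i(n)$ is a finite spectrum of type $n$, the periodicity theorem of Devinatz--Hopkins--Smith supplies a $v_n$-self-map $v_n^{k_i}\colon \Sigma^{d_i} M_i(n) \to M_i(n)$ for every $i$. The central technical task is to choose these self-maps so that they commute with the bonding maps of the tower, thereby assembling into an endomorphism of the tower $(M_i(n))_{i}$ in the sense of pro-objects. This I would achieve by invoking the asymptotic uniqueness clause of the periodicity theorem: any two $v_n$-self-maps on a type $n$ spectrum agree after passing to a sufficiently high common power. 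After replacing each $v_n^{k_i}$ by a suitable power and, if necessary, passing to a cofinal subsystem, the chosen self-maps form a map of towers, and defining $M_i(n+1)$ to be its levelwise cofiber simultaneously constructs the next tower and the asserted cofiber sequence.

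Three properties then need verification. The $BP$-homology identification is immediate from the long exact sequence, since multiplication by $v_n^{k_i}$ acts regularly on $BP_*/(p^{i_0},\ldots,v_{n-1}^{i_{n-1}})$; cofinality in $\N^{n+1}$ follows from the inductive hypothesis together with the fact that the exponents of the chosen $v_n$-self-maps can be arranged to tend to infinity along the tower. For self-duality of shift $n+1$, I would Spanier--Whitehead dualize the cofiber sequence $\Sigma^{d_i} M_i(n) \to M_i(n) \to M_i(n+1)$ and use the inductive self-duality $DM_i(n) \simeq \Sigma^{-n} M_i(n)$: after rotating the dualized cofiber sequence one identifies $DM_i(n+1)$ with $\Sigma^{-n-1} M_i(n+1)$, picking up exactly one additional suspension from the rotation.

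The principal obstacle is the compatibility of the $v_n$-self-maps with the tower structure. The periodicity theorem provides self-maps on each $M_i(n)$ individually with essentially no control between levels; making them commute with the bonding maps $M_{i+1}(n) \to M_i(n)$ is the delicate point, and this is precisely where the full strength of the asymptotic uniqueness statement is needed, forcing both a passage to high powers and possibly to a cofinal subsystem of the given tower. Once this coordination is achieved, the remainder of the proof is formal.
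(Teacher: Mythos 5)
The paper does not actually prove this theorem; it states it as a citation to Hovey--Strickland~\cite[Prop.~4.22]{hovey_morava_1999}, built on the periodicity theorem of Devinatz--Hopkins--Smith. Your proposal reconstructs that reference's argument in outline, and the overall architecture is correct: induct on $n$, use the periodicity theorem to produce $v_n$-self-maps, invoke asymptotic uniqueness and pass to powers (and possibly a cofinal subsystem) to make the self-maps commute with the bonding maps, and take levelwise cofibers to produce both the new tower and the cofiber sequence of towers. You also correctly identify tower compatibility of the self-maps as the crux.

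There is, however, a gap in the self-duality step that your ``formal'' framing conceals. Dualizing the cofiber sequence $\Sigma^{d_i} M_i(n) \xrightarrow{v} M_i(n) \to M_i(n+1)$ and applying the inductive self-duality $DM_i(n) \simeq \Sigma^{-n} M_i(n)$ exhibits $DM_i(n+1)$, after rotation, as the cofiber of the \emph{dual} self-map $Dv$ transported to a $v_n$-self-map of $M_i(n)$ --- not of $v$ itself. Since the cofiber of a self-map genuinely depends on the map and not merely on its asymptotic class, you cannot conclude $DM_i(n+1) \simeq \Sigma^{-n-1}M_i(n+1)$ without first arranging that the self-map is itself self-dual. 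The fix is another application of asymptotic uniqueness: since both $v$ and (the transport of) $Dv$ are $v_n$-self-maps, one has $(Dv)^a = v^a$ for some $a>0$, and then $D(v^a) = (Dv)^a = v^a$, so replacing $v$ by $v^a$ makes the self-map honestly self-dual. This choice of power must be coordinated with the power already being imposed for tower compatibility, so the two constraints should be handled simultaneously. You invoke the passage to powers only for the tower compatibility and then declare the remainder formal, which is where the argument currently falls short.

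A smaller remark: ``self-dual of shift $n$'' is being read too literally as $DM_i(n)\simeq \Sigma^{-n}M_i(n)$. The actual shift also involves the degrees $2(p^j-1)i_j$ coming from the attaching maps; the ``$n$'' is only the contribution from the number of rotations. This is a matter of convention and does not affect the structure of the argument, but a careful write-up should track the full shift.
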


\begin{defn}
For  a tower $(M_i(n))_{i \in \N}$ of generalized Moore spectra of type $n$ as in \Cref{thm:towermoore}, we denote the colimit of the dual system $(DM_i(n))_{i \in \N}$ by ${C}^f_{n-1}S^0$. This can be extended to a smashing functor ${C}^f_{n-1}\colon \Sp \to \Sp$ by defining
\begin{equation}\label{eq:globaltorsionid}
{C}^f_{n-1}X = X \otimes {C}^f_{n-1}S^0 \simeq X \otimes \colim_{i} DM_i(n)
\end{equation}
for any $X \in \Sp$. 
\end{defn}
Note that there is a natural transformation from $C_{n-1}^f \to \text{id}$. 
\begin{rem}
Note that, in the literature, $C_{n-1}^f$ is sometimes defined as the acyclification functor with respect to $F(n)$, hence in this case would coincide with $\Gamma_{I_n}$ by definition. The following proposition justifies our choice of notation. 
\end{rem}

The next result identifies the local cohomology functor $\Gamma_{I_n}$ associated to the local duality context $(\Sp,F(n))$ with ${C}^f_{n-1}$; this in particular recovers~\cite[Prop.~7.10]{hovey_morava_1999}.

\begin{prop}\label{prop:torsionfunctorspectra}
With notation as above, there exists a natural equivalence of functors
\[
\xymatrix{{C}_{n-1}^f \ar[r]^-{\sim} & \Gamma_{I_n}.}
\]
In other words, the system $(M_i(n))_{i \in \N}$ of generalized type $n$ Moore spectra is a Koszul system for the local duality context $(\Sp,F(n))$. 
\end{prop}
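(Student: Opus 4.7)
The plan is to verify that $C^f_{n-1}$, equipped with the natural transformation $\varepsilon\colon C^f_{n-1}(-)\to (-)$ induced by smashing $X$ with the dualized bottom-cell maps $DM_i(n)\to S^0$ and passing to the colimit, realises the universal property of the counit $\Gamma_{I_n}\to \mathrm{id}$. Both functors are smashing---$C^f_{n-1}$ by construction, and $\Gamma_{I_n}$ because $F(n)$ is compact in the compactly generated category $\Sp$, placing us in the setting of \Cref{thm:hps}---so it suffices to identify the maps $C^f_{n-1}S^0 \to S^0$ and $\Gamma_{I_n}S^0 \to S^0$. By the general theory, the latter is characterized up to canonical equivalence by the two properties (i) its source lies in $\Sp^{I_n-\mathrm{tors}}$, and (ii) its cofibre is annihilated upon smashing with $F(n)$.

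Property (i) is formal: each $M_i(n)$ is a finite spectrum of type $n$ and therefore lies in the thick subcategory $\mathcal{C}(n)$ generated by $F(n)$; since $\mathcal{C}(n)$ is closed under Spanier--Whitehead duality, each $DM_i(n)$ also lies in $\mathcal{C}(n) \subseteq \Sp^{I_n-\mathrm{tors}}$. As the latter is closed under filtered colimits, $C^f_{n-1}S^0 = \colim_i DM_i(n)$ is torsion.

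Property (ii) is the heart of the matter: we must show that the induced map $\colim_i\bigl(F(n) \otimes DM_i(n)\bigr) \to F(n)$ is an equivalence. I would proceed by induction on $n$. The base case $n = 0$ is trivial, since $F(0) = S^0$ and the tower degenerates to the constant tower on $S^0$. For the inductive step, the key input is the cofibre sequence of towers
\[ (M_i(n))_i \longrightarrow (M_i(n))_i \longrightarrow (M_i(n+1))_i \]
from \Cref{thm:towermoore}, whose first map is a compatible system of $v_n$-self-maps. Dualizing, smashing with $F(n+1)$, and passing to the colimit produces an exact triangle relating the quantity of interest at heights $n$ and $n+1$; combined with the inductive hypothesis and the fact that a $v_n$-self-map on a type-$n$ spectrum becomes nilpotent after smashing with the type-$(n{+}1)$ spectrum $F(n{+}1)$ (via the nilpotence theorem of Devinatz--Hopkins--Smith), the two outer terms cancel in the colimit and the desired equivalence at height $n+1$ is extracted. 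This inductive step---essentially the matching of the abstract cofibre sequence of towers against concrete $v_n$-localization data---is the principal technical obstacle, and one may model the argument directly on the computation carried out in the proof of~\cite[Prop.~7.10]{hovey_morava_1999}.
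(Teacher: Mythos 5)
Your proposal is correct and takes essentially the same route as the paper: both reduce to checking that $C^f_{n-1}F(n)\to F(n)$ is an equivalence (the paper packages this reduction as a citation of its lemma on strongly Koszul systems, where you spell out the characterization of the counit directly), and both establish that equivalence by induction on $n$ using the dualized cofibre sequence of Moore towers from \Cref{thm:towermoore} together with the nilpotence of $v_{n-1}$-self-maps after smashing with a type-$n$ complex. One small phrasing point: in the inductive step it is not that ``the two outer terms cancel'' but that the middle term---the $v_{n-1}$-telescope $v_{n-1}^{-1}C^f_{n-2}S^0$ smashed with $F(n)$---vanishes, forcing the outer two to agree; the substance of your argument is nonetheless the same as the paper's.
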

\begin{proof}
We will prove this by induction on $n$. If $n=0$, we can choose $F(0) = S^0$ and $M_i(0) = S^0$ for all $i$, hence both $\Gamma_{I_0}$ and ${C}_{-1}^f$ are equivalent to the identity functor. 

Now assume the claim has been proven up to height $n-1$. Dualizing the inductive construction of the tower $(M_i(n))$ of generalized Moore spectra of type $n$ from a type $n-1$ tower $(M_i(n-1))$ gives a cofiber sequence of towers
\[\xymatrix{\ldots \ar[d]_{\mathrm{can}} & \ldots \ar[d]^{v_{n-1}^{r_{i-1}}\circ\mathrm{can}} & \ldots \ar[d] \\
DM_i(n-1) \ar[r]^{w_i} \ar[d]_{\mathrm{can}} & DM_i(n-1) \ar[r] \ar[d]^{v_{n-1}^{r_{i}}\circ\mathrm{can}}  & \Sigma DM_{i}(n) \ar[d] \\
DM_{i+1}(n-1) \ar[r]^{w_{i+1}} \ar[d]_{\mathrm{can}} & DM_{i+1}(n-1) \ar[r] \ar[d]^{v_{n-1}^{r_{i+1}}\circ\mathrm{can}}  & \Sigma DM_{i+1}(n) \ar[d] \\
\ldots & \ldots & \ldots}\]
as in~\cite[Sec.~4]{hovey_morava_1999}. Here, the maps labeled ${can}$ are the canonical ones, while $v_{n-1}^{r_i}$ denotes an appropriate power of a $v_{n-1}$-self map. Passing to the colimit over these towers thus yields a cofiber sequence
\begin{equation}\label{eq:chromresol}
\xymatrix{C^f_{n-2}S^0 \ar[r] & v_{n-1}^{-1}C^f_{n-2}S^0 \ar[r] & \Sigma C^f_{n-1}S^0,}
\end{equation}
where $v_{n-1}^{-1}C^f_{n-2}S^0$ denotes the colimit of the tower $(v_n^{-1}DM_i(n))_{i\in \N}$. By the induction hypothesis, there are equivalences
\[
C_{n-2}^fF(n) \simeq \Gamma_{I_{n-1}}F(n) \simeq F(n)
\]
as $F(n) \in \Loc(\mathcal{C}(n)) = \Sp^{I_{n-1}-\mathrm{tors}}$. Furthermore, it is easy to see that $F(n) \otimes v_{n-1}^{-1}C^f_{n-2}S^0 = 0$, so smashing the cofiber sequence \eqref{eq:chromresol} with $F(n)$ shows that
\[
C_{n-1}^fF(n) \simeq F(n).
\]
Therefore, the claim follows from \Cref{lem:torsionred} and \Cref{thm:towermoore}.
\end{proof}

The localization functor $L_{n-1}^f$ has been defined as finite localization away from a finite type $n$ complex in~\cite{miller_finiteloc}, see also~\cite{ravenel_liftafter} and~\cite{mahowaldsadofsky}. This implies the existence of a cofiber sequence
\[
\xymatrix{C_{n-1}^f \ar[r] & \Id \ar[r] & L_{n-1}^f}
\]
for any $X \in \Sp$. Using~\Cref{prop:torsionfunctorspectra}, a comparison of cofiber sequences immediately gives the next result. 

\begin{cor}
There exists a natural equivalence $L_{n-1}^f \xrightarrow{\sim} L_{I_n}$ of endofunctors on $\Sp$.
\end{cor}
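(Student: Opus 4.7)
The plan is to compare two natural cofiber sequences fitting around the identity functor on $\Sp$. On the one hand, the local duality framework from \Cref{thm:hps}, applied to the context $(\Sp, F(n))$, produces the cofiber sequence
\[
\Gamma_{I_n} \to \Id \to L_{I_n}
\]
in which the first map is the counit of the localization of $\Sp$ at the $I_n$-torsion subcategory. On the other hand, as recalled just above the corollary, finite localization away from $F(n)$ yields the natural cofiber sequence
\[
C_{n-1}^f \to \Id \to L_{n-1}^f.
\]

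The first step is to verify that the equivalence $C_{n-1}^f \xrightarrow{\sim} \Gamma_{I_n}$ produced by \Cref{prop:torsionfunctorspectra} intertwines the two left-hand coaugmentations. Evaluated on the unit $S^0$, both maps come from the canonical map $\colim_i DM_i(n) \to S^0$ obtained by dualizing the units $S^0 \to M_i(n)$: for $C_{n-1}^f$ this is simply the definition, while for $\Gamma_{I_n}$ it is exactly what is packaged by the assertion that $(M_i(n))_{i \in \N}$ is strongly Koszul. Because $C_{n-1}^f$ and $\Gamma_{I_n}$ are both smashing (the former by construction, the latter as part of the output of \Cref{thm:hps} in this setting), smashing this identification of coaugmentations with an arbitrary $X \in \Sp$ upgrades it to a compatibility of natural transformations.

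Once this compatibility is in hand, the two cofiber sequences above assemble into a morphism of cofiber sequences of endofunctors whose left and middle arrows are natural equivalences. Applying the two-out-of-three property for distinguished triangles in a stable category then produces a natural equivalence on cofibers, which is precisely the desired $L_{n-1}^f \xrightarrow{\sim} L_{I_n}$.

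The only potential obstacle is ensuring that the equivalence of \Cref{prop:torsionfunctorspectra} is compatible with the natural maps to $\Id$ rather than an abstract equivalence of functors; but, as explained, this is automatic from the strongly Koszul formulation and the smashing property shared by both functors, so no further verification is required.
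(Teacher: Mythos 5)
Your proposal is correct and takes essentially the same route as the paper: both compare the finite-localization cofiber sequence $C_{n-1}^f \to \Id \to L_{n-1}^f$ with the local-duality cofiber sequence $\Gamma_{I_n} \to \Id \to L_{I_n}$ and conclude via \Cref{prop:torsionfunctorspectra}. The additional care you take in verifying that the equivalence $C_{n-1}^f \xrightarrow{\sim} \Gamma_{I_n}$ is compatible with the two coaugmentations to $\Id$ (by checking on $S^0$ and using that both functors are smashing) spells out a detail the paper's one-line ``comparison of cofiber sequences'' leaves implicit.
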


\begin{rem}
The notation $v_{n-1}^{-1}C^f_{n-2}S^0$ for the colimit of the middle tower is justified, because \eqref{eq:chromresol} can be identified with the cofiber sequence obtained by evaluating $C_{n-1}^f \to \Id \to L_{n-1}^f$ on $C_{n-2}^fS^0$ and suspending,
\[\xymatrix{C_{n-2}^fS^0 \ar[r] & L_{n-1}^fC_{n-2}^fS^0 \ar[r] & \Sigma C_{n-1}^fS^0;}\]
in particular, $BP_*(v_{n-1}^{-1}C^f_{n-2}S^0) \cong v_{n-1}^{-1}BP_*/I_{n-1}^{\infty}$. Therefore, these sequences are a topological incarnation of the short exact sequences in $BP_*$-modules which splice together to form the chromatic resolution. 
\end{rem}

Our abstract framework then allows us to immediately recover~\cite[Prop.~7.10(d)]{hovey_morava_1999} as well as the last part of~\cite[Cor.~B.8]{hovey_morava_1999}.

\begin{thm}\label{thm:globalspectralocduality}
With notation as above, there is a natural equivalence
\[\Lambda^{I_n}(-) \simeq L_{F(n)}(-) \simeq \lim_i M_i(n) \otimes (-),\]
where $L_{F(n)}$ denotes Bousfield localization with respect to $F(n)$. Furthermore, this functor induces a symmetric monoidal equivalence 
\[\xymatrix{\mathcal{C}_{n-1}^f \ar[r]^-{\sim} & \Sp_{F(n)}.}\]
In particular, $\Sp_{F(n)}$ is compactly generated by $F(n)$. Moreover, local duality takes the form of an adjunction
\[
\iHom(C_{n-1}^fX,Y) \simeq \iHom(X,L_{F(n)}Y),
\]
where $\iHom$ is the internal function object of $\Sp$. 
\end{thm}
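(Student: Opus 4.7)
The plan is to feed the identification $\Gamma_{I_n} \simeq C_{n-1}^f$ of \Cref{prop:torsionfunctorspectra} into the abstract local duality package (\Cref{thm:hps}) and then translate each ingredient into standard chromatic language; every piece of the statement should come out as a rewording of a general fact about local duality contexts.

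For the formula $\Lambda^{I_n}(-) \simeq \lim_i M_i(n) \otimes (-)$, I would use the general identification $\Lambda^{I_n} X \simeq \iHom(\Gamma_{I_n} S^0, X)$ supplied by the framework. Substituting $\Gamma_{I_n} S^0 \simeq C_{n-1}^f S^0 \simeq \colim_i DM_i(n)$ and commuting the colimit past the first argument of $\iHom$ produces $\lim_i \iHom(DM_i(n), X)$; dualizability of the finite spectra $M_i(n)$ then gives $\iHom(DM_i(n), X) \simeq M_i(n) \otimes X$, yielding the desired formula.

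To identify $\Lambda^{I_n}$ with $L_{F(n)}$, I would match their associated local subcategories. By \Cref{thm:hps}, $\Sp^{I_n-\mathrm{comp}}$ is the right orthogonal of $\Sp^{I_n-\mathrm{loc}}$, while $\Sp_{F(n)}$ is the right orthogonal of the $F(n)$-acyclic spectra, so it suffices to show that the classes of $F(n)$-acyclic and $L_{n-1}^f$-local spectra coincide. The forward inclusion is because, for any $F(n)$-acyclic $Y$, the subcategory $\{Z : Y \otimes Z = 0\}$ is localizing, contains $F(n)$, and hence contains $C_{n-1}^f S^0 \in \Loc(F(n))$, so that $C_{n-1}^f Y \simeq Y \otimes C_{n-1}^f S^0 = 0$; the reverse inclusion uses that $F(n)$ is torsion (established in the proof of \Cref{prop:torsionfunctorspectra}), which gives $F(n) \otimes Y \simeq F(n) \otimes C_{n-1}^f S^0 \otimes Y$, vanishing whenever $Y$ is $L_{n-1}^f$-local. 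With $\Lambda^{I_n} \simeq L_{F(n)}$ in hand, the symmetric monoidal equivalence $\mathcal{C}_{n-1}^f \simeq \Sp_{F(n)}$ is precisely the bottom equivalence of diagram~\eqref{eq:globalspectradiagram}; compact generation of $\Sp_{F(n)}$ by (the image of) $F(n)$ is transported through this equivalence from the fact that $F(n)$ compactly generates $\Loc(F(n)) = \mathcal{C}_{n-1}^f$; and the closing adjunction is the abstract local duality adjunction $\iHom(\Gamma_{I_n} X, Y) \simeq \iHom(X, \Lambda^{I_n} Y)$ rewritten in the new notation.

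The main obstacle will be the second step, matching $F(n)$-acyclics with $L_{n-1}^f$-locals. Both inclusions are essentially formal, but they rely simultaneously on the smashing property of $C_{n-1}^f$ from \eqref{eq:globaltorsionid}, on $F(n)$ being torsion, and on $C_{n-1}^f S^0$ lying in $\Loc(F(n))$ --- a consequence of the construction of the tower $(M_i(n))$ in \Cref{thm:towermoore}; assembling these ingredients cleanly is where the bulk of the argument lies.
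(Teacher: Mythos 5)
Your proposal is correct and follows essentially the same path as the paper: feed the identification $\Gamma_{I_n}\simeq C_{n-1}^f$ from \Cref{prop:torsionfunctorspectra} into the abstract local duality machinery and translate. The paper's proof is a one-liner citing \Cref{prop:torsionfunctorspectra}, \Cref{prop:lochombl}, \Cref{thm:hps}, and \Cref{cor:completion}; your argument effectively unpacks those citations. In particular, your first paragraph (deriving $\Lambda^{I_n}\simeq\lim_i M_i(n)\otimes(-)$ via $\iHom(\Gamma_{I_n}S^0,-)$, the colimit-to-limit exchange, and dualizability of the $M_i(n)$) is exactly what \Cref{cor:completion} packages, and your second paragraph (matching $F(n)$-acyclics with the kernel of $C_{n-1}^f$ by the two localizing-subcategory arguments) is a hands-on proof of what \Cref{prop:lochombl} provides abstractly --- that in a local duality context the completion coincides with Bousfield localization at the compact generator. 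Both directions of that matching are sound: the forward inclusion from $\{Z: Y\otimes Z=0\}$ being localizing and containing $F(n)$, and the reverse inclusion from $F(n)$ being torsion (so $F(n)\simeq F(n)\otimes C_{n-1}^f S^0$), a fact you correctly trace back to the proof of \Cref{prop:torsionfunctorspectra}. The remaining pieces (the symmetric monoidal equivalence $\mathcal{C}_{n-1}^f\simeq\Sp_{F(n)}$, compact generation, and the local duality adjunction) are, as you say, direct restatements of \Cref{thm:hps} under the new notation. No gaps.
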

\begin{proof}
This follows from \Cref{prop:torsionfunctorspectra}, \Cref{prop:lochombl}, \Cref{thm:hps}, and \Cref{cor:completion}.
\end{proof}

From now on, we will use the more standard notation displayed in the right diagram of \eqref{eq:globalspectradiagram} for the categories and functors obtained from the local duality context $(\Sp, F(n))$.

\begin{rem}
The finite localization functor $L_{n}^f$ is closely related to the Bousfield localization functor $L_{n}$ associated to Johnson--Wilson theory $E(n)$ at height $n$. There is a natural transformation
\[\xymatrix{L_{n}^f \ar[r] & L_{n}}\]
and Ravenel's telescope conjecture~\cite{ravenel_localization} asserts that this is an equivalence or, equivalently, that the localizing subcategory $\ker(L_{n})$ of $\Sp$ is generated by finite spectra. Since the telescope conjecture is only known at heights $\le 1$ and believed to be false~\cite{ravenel_progress}, we here work with the finite localization functors instead.
\end{rem}

\subsection{Local duality in stable homotopy theory --- the local case}\label{sec:localdualitylocalsth}
We now turn to a local version of this picture. To this end, fix an integer $n\ge 0$ and let $E=E(n)$ be Johnson--Wilson theory at height $n$. This spectrum has coefficients $E_*=E(n)_* = \Z_{(p)}[v_1,\ldots,v_n^{\pm 1}]$, where the degree of the generator $v_i$ is $2(p^i-1)$; as usual, we have $v_0=p$. Denote by $\Sp_{E(n)}$ the category of $E$-local spectra or, in other words, the essential image of $L_n=L_E$ in $\Sp$. By the smash product theorem of Hopkins and Ravenel~\cite{orangebook}, the corresponding Bousfield localization functor $L_n$ is smashing, hence $\Sp_{E(n)}$ is a stable subcategory of $\Sp$ with compact generator $L_nS^0$, which is also the tensor unit in this subcategory.

If $F(m)$ is a finite type $m$ spectrum for some $m\le n$, then $L_nF(m) \in \Sp_{E(n)}$ is compact and nonzero, so it generates a localizing subcategory $\mathcal{M}_m = \Sp_{E(n)}^{I_m-\mathrm{tors}} = \Locid{}(L_nF(m))=\Loc(L_nF(m))$, called the $m$-th monochromatic category of $\Sp_{E(n)}$. Note that $\mathcal{M}_m$ depends implicitly on the chosen ambient height $n$. Applying \Cref{thm:hps} to the local duality context $(\Sp_{E(n)},L_nF(m))$ produces a local analogue of \eqref{eq:globalspectradiagram}:
\color{black}
\begin{equation}\label{eq:localspectradiagram}
\xymatrix{& \Sp_{E(n)}^{I_m-\mathrm{loc}} \ar@<0.5ex>[d] \ar@{-->}@/^1.5pc/[ddr] & & & \Sp_{E(m-1)} \ar@<0.5ex>[d] \ar@{-->}@/^1.5pc/[ddr] \\
& \Sp_{E(n)} \ar@<0.5ex>[u]^{L_{I_m}} \ar@<0.5ex>[ld]^{\Gamma_{I_m}} \ar@<0.5ex>[rd]^<<<<<{\Lambda^{I_m}} & & & \Sp_{E(n)} \ar@<0.5ex>[u]^{L_{m-1}} \ar@<0.5ex>[ld]^{M_m} \ar@<0.5ex>[rd]^<<<<<{L_{E(m,n)}} \\
\Sp_{E(n)}^{I_m-\mathrm{tors}} \ar@<0.5ex>[ru] \ar[rr]_-{\sim} \ar@{-->}@/^1.5pc/[ruu] & & \Sp_{E(n)}^{I_m-\mathrm{comp}} \ar@<0.5ex>[lu] &  \mathcal{M}_m \ar@<0.5ex>[ru] \ar[rr]_-{\sim} \ar@{-->}@/^1.5pc/[ruu] & & \Sp_{E(m,n)}, \ar@<0.5ex>[lu]}
\end{equation}
where $E(m,n)$ denotes the spectrum $F(m) \otimes E(n)$. The goal of this subsection is to identify the categories and functors of the diagram on the left with the familiar ones on the right, using a compatibility result between the local and the global case. Note that the functors $\Gamma_{I_m}$, $L_{I_m}$, and $\Lambda^{I_m}$ implicitly depend on the fixed ambient height $n$, but this dependence will be omitted from the notation. 
\begin{lem}\label{lem:monochromaticid}
There is a commutative diagram of adjunctions
\[\xymatrix{\mathcal{C}_{m-1}^f \ar[d]_{L_n} \ar@<-0.5ex>[r] & \Sp \ar@<-0.5ex>[l]_-{C_{m-1}^f} \ar[d]^{L_n}\\
\mathcal{M}_{m} \ar@<-0.5ex>[r]  & \Sp_{E(n)} \ar@<-0.5ex>[l]_-{M_m}.}\]
In particular, there is a natural equivalence
\[M_m(L_nX) \simeq \colim_i DM_i(m) \otimes L_nX\]
for any spectrum $X$, where $(M_i(m))_{i \in \N}$ is a tower of generalized Moore spectra of type $m$. 
\end{lem}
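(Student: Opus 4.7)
The plan is to identify $M_m L_n \simeq L_n C_{m-1}^f$ by applying the exact functor $L_n$ to the local duality cofiber sequence for $(\Sp, F(m))$, recognizing the result as the local duality sequence for $(\Sp_{E(n)}, L_nF(m))$, and then deducing the explicit colimit formula from the smashing property of $L_n$ together with the description $C_{m-1}^f(-) \simeq (-) \otimes \colim_i DM_i(m)$ coming from \Cref{prop:torsionfunctorspectra}.

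First I would observe that $L_n$ carries $\mathcal{C}_{m-1}^f = \Loc(F(m))$ into $\mathcal{M}_m = \Loc(L_nF(m))$, since $L_n$ preserves colimits. This produces the restricted functor $L_n\colon \mathcal{C}_{m-1}^f \to \mathcal{M}_m$ and makes the left-adjoint square of the diagram commute tautologically. For the right-adjoint square, given $X \in \Sp$ I would apply $L_n$ to the cofiber sequence $C_{m-1}^f X \to X \to L_{m-1}^f X$ to produce
\[
L_n C_{m-1}^f X \to L_n X \to L_n L_{m-1}^f X
\]
in $\Sp_{E(n)}$. The first term lies in $\mathcal{M}_m$ by the preceding observation. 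For the third term, $L_{m-1}^f$ is smashing with $L_{m-1}^f F(m) = 0$, hence $F(m) \otimes L_{m-1}^f X = 0$; since $L_n$ is also smashing, $L_n F(m) \otimes L_n L_{m-1}^f X \simeq L_n(F(m) \otimes L_{m-1}^f X) = 0$, so $L_n L_{m-1}^f X$ is $I_m$-local in $\Sp_{E(n)}$. The uniqueness of the torsion--local decomposition from \Cref{thm:hps} then identifies the displayed sequence with the local duality sequence for $L_n X$, giving a natural equivalence $L_n C_{m-1}^f X \simeq M_m L_n X$ and hence commutativity of the full diagram of adjunctions.

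The explicit formula then drops out: combining this equivalence with \Cref{prop:torsionfunctorspectra} and using that $L_n$ is smashing to move $L_n$ past the tensor product gives
\[
M_m L_n X \simeq L_n(X \otimes \colim_i DM_i(m)) \simeq L_n X \otimes \colim_i DM_i(m).
\]
I expect the only substantive step to be the $I_m$-locality of $L_n L_{m-1}^f X$ inside $\Sp_{E(n)}$; this is precisely the compatibility between the global and local local-duality contexts, and it is ultimately a formal consequence of both $L_n$ and $L_{m-1}^f$ being smashing.
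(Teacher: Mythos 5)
Your proposal is correct, and it takes a genuinely different route from the paper's own proof. The paper establishes the right-adjoint square by explicitly writing down a candidate $M_m \simeq L_n C_{m-1}^f \iota_n$ (where $\iota_n$ is the right adjoint to $L_n$) and verifying the adjunction against the compact generator $L_nF(m)$, then precomposing with $L_n$ to get $M_m L_n \simeq L_n C_{m-1}^f$. You instead apply the exact, smashing functor $L_n$ to the global local duality cofiber sequence $C_{m-1}^f X \to X \to L_{m-1}^f X$, identify the first term as $I_m$-torsion and the third as $I_m$-local in $\Sp_{E(n)}$, and invoke the uniqueness of the torsion--local decomposition from \Cref{thm:hps}. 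Both are formal; yours uses the fracture structure of the local duality context directly, while the paper's reduces the adjunction to a mapping-space computation on generators (which also has the mild advantage of producing a standalone formula for $M_m$ on all of $\Sp_{E(n)}$ without needing to observe $L_n$ is essentially surjective). One small point worth spelling out in your argument: to pass from $L_nF(m) \otimes L_nL_{m-1}^fX = 0$ to $L_nL_{m-1}^fX$ being $I_m$-local (i.e.\ right orthogonal to $\Loc(L_nF(m))$) you should note that $L_nF(m)$ is dualizable with $D L_nF(m) \simeq L_n DF(m)$ also of type $m$, so the vanishing of the tensor product with $L_nF(m)$ is equivalent to the vanishing of $\iHom(L_nF(m), -)$. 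That said, this is exactly the kind of compatibility the abstract framework is designed to make routine, and your proof is sound.
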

\begin{proof}
First observe that the composite $\mathcal{C}_{m-1}^f \to \Sp \to \Sp_{E(n)}$ takes values in $\mathcal{M}_m$, as it sends the generator $F(m)$ to the generator $L_nF(m)$ and commutes with colimits. This shows that the square involving the inclusion functors commutes. Secondly, we claim that $L_nC_{m-1}^f\iota_n$ is a right adjoint to the inclusion $\mathcal{M}_m \to \Sp_{E(n)}$, where $\iota_n$ denotes the right adjoint to $L_n\colon \Sp_{E(n)} \to \Sp$ with $L_n\iota_n = \text{id}$. To see this, it is enough to test against the generator $L_nF(m)$ of $\mathcal{M}_m$; indeed, for any $X \in \Sp_{E(n)}$, we get
\begin{align*}
\Hom(L_nF(m), L_nC_{m-1}^f\iota_nX) & \simeq \Hom(F(m), C_{m-1}^fL_n\iota_nX) \\
& \simeq \Hom(F(m),L_n\iota_nX) \\
& \simeq \Hom(L_nF(m), L_n\iota_nX) \\
& \simeq \Hom(L_nF(m), X).
\end{align*}
Here, the first equivalence uses the fact that $L_n$ is smashing, so that it commutes with $C_{m-1}^f$ by \eqref{eq:globaltorsionid}. Therefore, we obtain $M_m \simeq L_nC_{m-1}^f\iota_n$. Precomposing with $L_n$ then gives a natural equivalence
\[M_mL_n \simeq L_nC_{m-1}^f\iota_nL_n \simeq C_{m-1}^fL_n\iota_nL_n \simeq C_{m-1}^fL_n \simeq L_nC_{m-1}^f,\]
thereby verifying that the above square commutes. By \Cref{prop:torsionfunctorspectra}, there results a natural equivalence
\begin{align*}
M_mL_nX & \simeq L_nC_{m-1}^fX \\
& \simeq L_n\colim_i DM_i(m) \otimes X \\
& \simeq \colim_i DM_i(m) \otimes L_nX,
\end{align*}
since $L_n$ is a smashing localization. 
\end{proof}

\begin{lem}\label{lem:localizationid}
The left adjoint to the inclusion $\Sp_{E(m-1)} \to \Sp_{E(n)}$ is given by $L_{m-1}$, and there is a commutative diagram
\[\xymatrix{\Sp \ar@<0.5ex>[r]^-{L_{m-1}^f} \ar[d]_{L_n} & \Sp_{E(m-1)}^f \ar[d]^{L_n} \ar@<0.5ex>[l]  \\
\Sp_{E(n)} \ar@<0.5ex>[r]^-{L_{m-1}} & \Sp_{E(m-1)}. \ar@<0.5ex>[l]}\]
\end{lem}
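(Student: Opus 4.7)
The plan is to handle the two claims in sequence. For the first, I note that $\Sp_{E(m-1)}$ sits inside $\Sp_{E(n)}$ as a full subcategory: by the standard identification $\langle E(k)\rangle = \bigvee_{i=0}^{k}\langle K(i)\rangle$, every $E(n)$-acyclic spectrum is $E(m-1)$-acyclic, whence every $E(m-1)$-local spectrum is $E(n)$-local. Granted this inclusion, for $X \in \Sp_{E(n)}$ and $Y \in \Sp_{E(m-1)}$ fullness combined with the universal property of $L_{m-1}$ gives
\[
\Hom_{\Sp_{E(n)}}(X,Y) = \Hom_{\Sp}(X,Y) = \Hom_{\Sp}(L_{m-1}X, Y) = \Hom_{\Sp_{E(m-1)}}(L_{m-1}X, Y),
\]
exhibiting $L_{m-1}$ as left adjoint to the inclusion.

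For commutativity of the square, the crux is to prove that $Z := L_n L_{m-1}^f X$ is already $E(m-1)$-local for every $X \in \Sp$. Granted this, both $L_n L_{m-1}^f X$ and $L_{m-1} L_n X \simeq L_{m-1} X$ represent the $E(m-1)$-localization of $X$: the latter equivalence holds because the unit $X \to L_n X$ is an $E(n)$-equivalence, hence an $E(m-1)$-equivalence; and the universal property of $L_{m-1}$ at $L_n L_{m-1}^f X$ is tautological because any $E(m-1)$-local $Y$ is both $E(n)$-local (by the first part) and $L_{m-1}^f$-local (since $F(m)$ is $E(m-1)$-acyclic of type $m$ and $L_{m-1}$ is smashing, forcing $F(m) \otimes Y = 0$), so $\Hom(X,Y) = \Hom(L_{m-1}^f X, Y) = \Hom(L_n L_{m-1}^f X, Y)$. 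Hence both are $E(m-1)$-localizations of $X$ and agree canonically.

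To see $Z$ is $E(m-1)$-local, I would consider the cofiber $K$ of $Z \to L_{m-1} Z$. Both $Z$ and $L_{m-1} Z$ lie in $\Sp_{E(n)}$, so $K$ does too, and $K$ is $E(m-1)$-acyclic by construction. Moreover $F(m) \otimes Z = 0$ (as $L_n$ and $L_{m-1}^f$ commute as smashing localizations, so $Z$ is $L_{m-1}^f$-local) and $F(m) \otimes L_{m-1} Z = 0$ ($L_{m-1}$ smashing, plus $F(m)$ is $E(m-1)$-acyclic), whence $F(m) \otimes K = 0$. This last condition forces $K(i) \otimes K = 0$ for $i \ge m$ (using that $K(i) \otimes F(m)$ is a nonzero finite free $K(i)_*$-module for $i \ge m$), $E(m-1)$-acyclicity handles $i < m$, and $E(n)$-locality takes care of $i > n$ (since $K(j)$ is $E(n)$-acyclic for $j > n$, by $\langle E(n)\rangle = \bigvee_{i \le n}\langle K(i)\rangle$ together with $K(i) \otimes K(j) = 0$ for $i \ne j$). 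Hence $K$ is $K(i)$-acyclic for all $i$, in particular $E(n)$-acyclic, and being $E(n)$-local it vanishes. The main obstacle is this $K(i)$-range analysis, which is precisely where the combined input of smashing $L_n$ and $L_{m-1}^f$ allows one to bridge to $E(m-1)$-locality without invoking the (open) telescope conjecture; alternatively, one could reach the same conclusion by applying $L_n$ to the global cofiber sequence $C_{m-1}^f X \to X \to L_{m-1}^f X$, using \Cref{lem:monochromaticid} to identify the left term with $M_m L_n X$, and then comparing kernels to show $L_{I_m}|_{\Sp_{E(n)}} = L_{m-1}|_{\Sp_{E(n)}}$ by the same acyclicity argument.
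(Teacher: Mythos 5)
Your proof is correct, but it takes a genuinely different route from the paper's. The paper applies $L_n$ to the cofiber sequence $C_{m-1}^f \to \mathrm{id} \to L_{m-1}^f$, uses \Cref{lem:monochromaticid} to identify the left-hand term with $M_mL_n$, and then simply cites~\cite[Cor.~6.10]{hovey_morava_1999} for the equivalence $L_nL_{m-1}^f\simeq L_{m-1}$. You instead \emph{reprove} that Hovey--Strickland identification from scratch: you show that $Z=L_nL_{m-1}^fX$ is already $E(m-1)$-local by a Bousfield-class/Morava $K$-theory acyclicity argument — the cofiber $K$ of $Z\to L_{m-1}Z$ is $E(n)$-local, $E(m-1)$-acyclic (handling $K(i)$ for $i<m$), and satisfies $F(m)\otimes K=0$ (handling $K(i)$ for $i\ge m$ via the K\"unneth isomorphism), so $K$ is $E(n)$-acyclic and hence zero. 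The two approaches buy different things: the paper's proof is shorter and sits naturally in the $\Gamma/L$-cofiber-sequence framework of the section, making clear that \Cref{lem:monochromaticid} and this lemma are two faces of the same comparison; yours is self-contained, needing only the standard facts $\langle E(k)\rangle=\bigvee_{i\le k}\langle K(i)\rangle$, $K(i)\otimes K(j)=0$ for $i\ne j$, smashingness of $L_n$ and $L_{m-1}^f$, and the K\"unneth isomorphism, and thus would work even for a reader who does not want to unwind the cited corollary. One small redundancy: once you have $K(i)\otimes K=0$ for all $i\ge m$ from $F(m)\otimes K=0$, the separate appeal to $E(n)$-locality for the range $i>n$ is unnecessary, since $i>n\ge m$ is already covered.
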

\begin{proof}
Applying $L_n$ to the cofiber sequence of functors
\[\xymatrix{C_{m-1}^f \ar[r] &  \mathrm{id} \ar[r] & L_{m-1}^f}\]
and using \Cref{lem:monochromaticid} gives the cofiber sequence
\[
\xymatrix{M_mL_n \ar[r] & L_n \ar[r] & L_nL_{m-1}^f}
\]
defined on $\Sp_{E(n)}$. By~\cite[Cor.~6.10]{hovey_morava_1999}, there is a natural equivalence $L_nL_{m-1}^f \simeq L_{m-1}$, from which the claim follows easily. 
\end{proof}

\begin{lem}\label{lem:completionid}
The functor $\Lambda^{I_m}\colon \Sp_{E(n)} \to \Sp_{E(m,n)}$ constructed from the local duality context $(\Sp_{E(n)},L_nF(m))$ is Bousfield localization at the cohomology theory $E(m,n) = F(m) \otimes E_n$. Furthermore, the category $\Sp_{E(n)}^{I_m-\mathrm{comp}}$ is equivalent to the category $\Sp_{E(m,n)}$ of $E(m,n)$-local spectra.
\end{lem}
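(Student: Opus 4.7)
The plan is to characterize the $I_m$-local and $I_m$-complete objects of $\Sp_{E(n)}$ intrinsically in terms of $E(m,n)$, and then read off the identification of $\Lambda^{I_m}$.

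First I would identify the $I_m$-local subcategory $\Sp_{E(n)}^{I_m-\mathrm{loc}}$ with the $E(m,n)$-acyclic objects of $\Sp_{E(n)}$. By \Cref{thm:hps}, this subcategory consists of those $Z \in \Sp_{E(n)}$ with $\iHom(L_nF(m), Z) \simeq 0$, and since $L_n$ is smashing, the self-duality up to shift of $F(m)$ from \Cref{thm:towermoore} transfers to $L_nF(m)$ inside $\Sp_{E(n)}$. This reduces the vanishing of $\iHom(L_nF(m), Z)$ to the vanishing of $L_nF(m) \otimes Z \simeq L_n(F(m) \otimes Z)$, which in turn is equivalent to $F(m) \otimes E(n) \otimes Z = E(m,n) \otimes Z \simeq 0$.

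With this in hand, $\Sp_{E(n)}^{I_m-\mathrm{comp}} = (\Sp_{E(n)}^{I_m-\mathrm{loc}})^{\perp}$ consists of those $X \in \Sp_{E(n)}$ with $\Hom(Z, X) \simeq 0$ whenever $Z \in \Sp_{E(n)}$ is $E(m,n)$-acyclic. I would then argue this coincides with $\Sp_{E(m,n)} \subseteq \Sp$. One containment follows from $\langle E(m,n) \rangle \leq \langle E(n) \rangle$, which places $\Sp_{E(m,n)}$ inside $\Sp_{E(n)}$ and makes any $E(m,n)$-local $X$ also $I_m$-complete in $\Sp_{E(n)}$. For the converse, given any $E(m,n)$-acyclic $W \in \Sp$, the spectrum $L_nW$ lies in $\Sp_{E(n)}$ and remains $E(m,n)$-acyclic, since $E(m,n)$ is itself $E(n)$-local so smashing with it commutes with $L_n$; the $L_n \dashv \iota_n$ adjunction then converts $\Hom(W, X)$ into $\Hom(L_nW, X)$, which vanishes by hypothesis.

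Finally, $\Lambda^{I_m}$ is by construction the left adjoint to the inclusion $\Sp_{E(n)}^{I_m-\mathrm{comp}} \hookrightarrow \Sp_{E(n)}$. Under the identification $\Sp_{E(n)}^{I_m-\mathrm{comp}} \simeq \Sp_{E(m,n)}$ just established, this is precisely the restriction of $L_{E(m,n)}$ to $\Sp_{E(n)}$. The main obstacle will be carefully tracking how smash products, internal homs, and Bousfield classes interact when passing between $\Sp$ and $\Sp_{E(n)}$; the key enabler throughout is that $L_n$ is smashing, which ensures that the symmetric monoidal structure on $\Sp_{E(n)}$ is compatible with the ambient one on $\Sp$ and lets us freely exchange $L_nF(m) \otimes (-)$ with $L_n(F(m) \otimes (-))$.
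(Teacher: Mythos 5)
Your argument is correct, but it is genuinely different from the paper's. The paper takes the computational route: it applies \Cref{cor:completion} to get the limit formula $\Lambda^{I_m}X \simeq \lim_i M_i(m)\otimes L_nX$, then identifies this with $L_{F(m)}L_nX$ via \Cref{thm:globalspectralocduality}, and finally appeals to (a generalization of) a result of Hovey to rewrite $L_{F(m)}L_n$ as $L_{F(m)\otimes E(n)}$. Your approach bypasses the Koszul tower formula entirely and instead matches the two Bousfield localizations directly at the level of orthogonality: you identify $\Sp_{E(n)}^{I_m-\mathrm{loc}}$ with the $E(m,n)$-acyclics in $\Sp_{E(n)}$, deduce that $\Sp_{E(n)}^{I_m-\mathrm{comp}}$ is exactly $\Sp_{E(m,n)}$, and then conclude by uniqueness of left adjoints. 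This is arguably more self-contained, since it needs neither \Cref{lem:monochromaticid} nor \Cref{thm:globalspectralocduality} nor Hovey's comparison result; the trade-off is that the paper's route also produces the explicit limit formula $\Lambda^{I_m}X \simeq \lim_i M_i(m)\otimes X$, which is useful later.

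One small imprecision: you cite \Cref{thm:towermoore} for "self-duality of $F(m)$," but that statement only applies to generalized Moore spectra, not to an arbitrary finite type $m$ complex $F(m)$, which is all the local duality context $(\Sp_{E(n)},L_nF(m))$ assumes. What you actually need is only that the Spanier--Whitehead dual $DF(m)$ is again a finite type $m$ complex, hence has the same Bousfield class as $F(m)$. With that replacement, the reduction from $\iHom(L_nF(m),Z)\simeq 0$ to $F(m)\otimes Z\simeq 0$ (for $E(n)$-local $Z$) goes through, and the rest of the argument is sound: your observation that $F(m)\otimes Z$ is simultaneously $E(n)$-local (by smashingness) and $E(n)$-acyclic gives the equivalence with $E(m,n)\otimes Z\simeq 0$, and the $L_n\dashv\iota_n$ adjunction step for the reverse containment is exactly right.
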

\begin{proof}
For any $X \in \Sp_{E(n)}$, there are equivalences
\begin{align*}
L_{E(m,n)}X & \simeq  \lim_i M_i(m) \otimes L_nX  & & \text{by \Cref{cor:completion} and \Cref{lem:monochromaticid}} \\ 
& \simeq L_{F(m)}L_nX & & \text{by \Cref{thm:globalspectralocduality}} \\
& \simeq L_{F(m) \otimes E(n)}X, & & 
\end{align*}
where the last equivalence follows from the obvious generalization of~\cite[Cor.~2.2]{hovey_csc}. Since $\Sp_{E(n)}^{I_m-\mathrm{comp}}$ and $\Sp_{E(m,n)}$ are the essential images of $\Lambda^{I_m}$ and $L_{E(m,n)}$, respectively, the second part of the claim follows.
\end{proof}

As an immediate application, we  obtain chromatic fracture squares as a special case of \Cref{cor:fracturesquare}. 

\begin{cor}
For any $M \in \Sp_{E(n)}$ and $m \le n$, there is a pullback square
\[
\xymatrix{M \ar[r] \ar[d] & L_{E(m,n)}M \ar[d] \\
L_{m-1}M \ar[r] & L_{m-1}L_{E(m,n)}M.}\
\]
\end{cor}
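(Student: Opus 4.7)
The plan is to apply the abstract fracture square of \Cref{cor:fracturesquare} to the local duality context $(\Sp_{E(n)}, L_n F(m))$, which produces, for any $M \in \Sp_{E(n)}$, a pullback square of the form
\[
\xymatrix{M \ar[r] \ar[d] & \Lambda^{I_m} M \ar[d] \\ L_{I_m} M \ar[r] & L_{I_m} \Lambda^{I_m} M.}
\]
The task then reduces to identifying each of the three corner functors with its standard chromatic counterpart, using the results already established in this subsection.

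First I would invoke \Cref{lem:completionid} to rewrite $\Lambda^{I_m}$ as the Bousfield localization $L_{E(m,n)}$ at the cohomology theory $E(m,n) = F(m)\otimes E(n)$; this takes care of the upper-right corner. Next I would apply \Cref{lem:localizationid}, which identifies the left adjoint $L_{I_m}$ to the inclusion $\Sp_{E(m-1)} \hookrightarrow \Sp_{E(n)}$ with the Bousfield localization $L_{m-1}$; this handles the lower-left corner. The lower-right corner $L_{I_m}\Lambda^{I_m}M$ is then simply the composite $L_{m-1}L_{E(m,n)}M$, obtained by combining the two identifications.

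Substituting these three replacements into the abstract square yields exactly the stated chromatic fracture pullback. Since \Cref{cor:fracturesquare} supplies the pullback property itself, there is no genuine obstacle in this argument — the entire content is the unraveling of notation justified by the two preceding lemmas. The only point worth noting is that the identifications $\Lambda^{I_m} \simeq L_{E(m,n)}$ and $L_{I_m} \simeq L_{m-1}$ are natural, so they compose to give $L_{I_m}\Lambda^{I_m} \simeq L_{m-1}L_{E(m,n)}$ automatically, and the pullback structure is preserved under these natural equivalences.
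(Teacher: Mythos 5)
Your argument is correct and is essentially the paper's own: the paper states the corollary as an immediate specialization of \Cref{cor:fracturesquare} to the local duality context $(\Sp_{E(n)}, L_nF(m))$, with the corner identifications supplied by \Cref{lem:localizationid} and \Cref{lem:completionid}. Your write-up simply spells out the translation step the paper leaves implicit.
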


\begin{rem}
Note that the naive analogues of \Cref{lem:monochromaticid} and \Cref{lem:localizationid} for the completion functor do not hold in general. Indeed, for $m=n$, we get a diagram
\begin{equation}\label{eq:localtoglobalcompletion}
\xymatrix{\Sp \ar@<0.5ex>[r]^-{L_{F(n)}} \ar[d]_{L_n} & \Sp_{F(n)} \ar[d]^{L_n} \ar@<0.5ex>[l]  \\
\Sp_{E(n)} \ar@<0.5ex>[r]^-{L_{K(n)}} & \Sp_{K(n)}. \ar@<0.5ex>[l]}
\end{equation}
By~\Cref{lem:completionid}, there is a natural equivalence $L_{K(n)} \simeq L_{F(n)}L_n$, but this is not equivalent to $L_nL_{F(n)}$ if $n>0$, as can be verified on the sphere spectrum. Therefore, the square \eqref{eq:localtoglobalcompletion} does not commute. 
\end{rem}

\begin{rem}
Since the localization functor $L_{E(m,n)}$ depends only on the Bousfield class of $E(m,n)$, it can also be identified with localization at the cohomology theory $E(n)/I_m$, which is the $E(n)$-algebra with coefficients $E(n)_*/(p,\ldots,v_{m-1})$. In particular, if $m=n$, then $L_{E(n,n)} \simeq L_{K(n)}$. Moreover, in this case the cofiber sequence constructed in the proof of \Cref{lem:localizationid}
\[\xymatrix{M_nL_n \ar[r] & L_n \ar[r] & L_{n-1}}\]
is the usual sequence defining the monochromatic layer $M_nL_n$.
\end{rem}

Local duality for the category $\Sp_{E(n)}$ with respect to a finite type $m$ complex then takes the following form.

\begin{prop}
For $X,Y \in \Sp_{E(n)}$, there are natural equivalences
\[
L_{E(m,n)}X \simeq \lim_i M_i(m) \otimes X 
\]
and
\[
\iHom(M_mX,Y) \simeq \iHom(X,L_{E(m,n)}Y),
\]
where $\iHom$ denotes the internal function object of $\Sp_{E(n)}$, which agrees with the one in $\Sp$. Furthermore, there is an equivalence of stable categories
\[
\xymatrix{\mathcal{M}_m \ar[r]^-{\sim} & \Sp_{E(m,n)}.}
\]
\end{prop}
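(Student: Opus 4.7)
The plan is to assemble this proposition directly from the preceding lemmas and the abstract local duality package of Theorem \ref{thm:hps}, applied to the local duality context $(\Sp_{E(n)}, L_nF(m))$; no substantially new homotopical input is required.

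First, for the limit formula, I would combine Lemma \ref{lem:completionid}, which identifies $\Lambda^{I_m}$ with $L_{E(m,n)}$, with Corollary \ref{cor:completion}, which expresses the abstract completion functor as the derived limit over the Koszul system. The Koszul system for the local context is $(L_n M_i(m))_{i\in\N}$; since $X$ is $E(n)$-local and $L_n$ is smashing, $L_nM_i(m) \otimes X \simeq M_i(m) \otimes X$, yielding the displayed formula.

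For the internal Hom adjunction, Theorem \ref{thm:hps} supplies a natural equivalence $\iHom(\Gamma_{I_m}X, Y) \simeq \iHom(X, \Lambda^{I_m}Y)$ in any local duality context. By definition, $\Gamma_{I_m}$ is the right adjoint to the inclusion $\Sp_{E(n)}^{I_m-\mathrm{tors}} = \mathcal{M}_m \hookrightarrow \Sp_{E(n)}$, hence agrees with $M_m$; Lemma \ref{lem:monochromaticid} moreover provides its explicit description. Combined with $\Lambda^{I_m} \simeq L_{E(m,n)}$ from Lemma \ref{lem:completionid}, the required adjunction follows. The agreement of the two internal function objects is a standard consequence of $L_n$ being smashing: for an $E(n)$-acyclic $Z$ and $X,Y \in \Sp_{E(n)}$, one has $\Hom(Z, \iHom_{\Sp}(X,Y)) \simeq \Hom(Z \otimes X, Y) = 0$ since $Z \otimes X$ is $E(n)$-acyclic, so $\iHom_{\Sp}(X,Y)$ already lies in $\Sp_{E(n)}$.

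Finally, the equivalence of categories is immediate from the symmetric monoidal equivalence $\Sp_{E(n)}^{I_m-\mathrm{tors}} \simeq \Sp_{E(n)}^{I_m-\mathrm{comp}}$ of Theorem \ref{thm:hps}, since the left-hand side is $\mathcal{M}_m$ by definition and the right-hand side is identified with $\Sp_{E(m,n)}$ by the last assertion of Lemma \ref{lem:completionid}. The main obstacle, such as it is, is purely bookkeeping: one must verify that the $\Gamma_{I_m}$ and $\Lambda^{I_m}$ of the abstract framework really do coincide with the classically defined $M_m$ and $L_{E(m,n)}$ in the local context, rather than with their global counterparts $C_{m-1}^f$ and $L_{F(m)}$ — but this coincidence is exactly the content of the two lemmas cited, so the remainder of the argument is formal.
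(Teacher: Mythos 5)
Your proposal is correct and follows essentially the same route as the paper, whose proof is simply the citation of Lemma~\ref{lem:monochromaticid}, Lemma~\ref{lem:completionid}, and the abstract results of Section~\ref{sec:abstract}; you unpack exactly those ingredients. Your added argument that $\iHom_{\Sp}(X,Y)$ is already $E(n)$-local when $X,Y$ are (testing against $E(n)$-acyclics) is a sensible way to justify the parenthetical claim that the two internal Homs agree, which the paper states but does not spell out.
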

\begin{proof}
This follows from \Cref{lem:monochromaticid}, \Cref{lem:completionid}, and the results in \Cref{sec:abstract}.
\end{proof}

This is a generalization of~\cite[Prop.~7.10]{hovey_morava_1999}.

\subsection{From $E$-local spectra to $E$-module spectra}\label{sec:Elocal}

The arguments of the previous section can also be applied to the adjunctions
\[\xymatrix{\Sp \ar@<0.5ex>[r]^-{- \otimes BP} & \Mod_{BP} \ar@<0.5ex>[l] \ar@<0.5ex>[r]^-{- \otimes_{BP} E} & \Mod_{E} \ar@<0.5ex>[l] }\]
to obtain similar comparison results between the corresponding torsion, localization, and completion functors. This compatibility has been studied before by Greenlees and May~\cite[\S 6]{gm_localhomology}, see also~\cite[\S 3]{bs_centralizers}.

To this end, let $BP$ be the Brown--Peterson spectrum with coefficients $BP_*=\Z_{(p)}[v_1,v_2,\ldots]$. Landweber showed that the invariant prime ideals of $BP_*$ are given by $I_n = (p,\ldots,v_{n-1})$ for all $n\ge 0$. The free $BP$-module functor $-\otimes BP$ gives a left adjoint to the forgetful functor $\Mod_{BP} \to \Sp$, and this adjunction will be denoted
\[\xymatrix{\Phi_*\colon \Sp \ar@<0.5ex>[r]^-{- \otimes BP} & \Mod_{BP} \ar@<0.5ex>[l]\colon \Phi^*}\]
in this section.

\begin{rem}
By a result of Basterra and Mandell~\cite{basterramandell}, $BP$ admits the structure of an $\mathbb{E}_4$-ring spectrum, but Lawson~\cite{lawson_bp} proved recently that $BP$ does not admit the structure of an $\mathbb{E}_{12}$-ring spectrum at the prime $2$. Consequently, the stable $\infty$-category $\Mod_{BP}$ of $BP$-module spectra is only known to be $\mathbb{E}_3$-monoidal, hence is not a stable category as defined in \Cref{sec:abstract}. However, for the arguments in this section, an $\mathbb{E}_1$-monoidal structure suffices; alternatively, the reader can replace $BP$ by the $p$-localization of the complex cobordism spectrum $MU$, which does have a natural $\mathbb{E}_{\infty}$-ring structure.
\end{rem}

Suppose $F(n)=M(n)$ is a generalized Moore spectrum of type $n$, generating the localizing subcategory $\mathcal{C}_{n-1}^f$ of $\Sp$. Since there exists a sequence of positive natural numbers $(i_0,i_1,\ldots,i_{n-1})$ such that
\[
BP_*(F(n)) \cong BP_*/(p^{i_0},\ldots,v_{n-1}^{i_{n-1}}),
\]
the $BP$-module $BP \otimes F(n)$ generates the category $\Mod_{BP}^{I_n-\text{tors}}$ of $I_n$-torsion $BP$-module spectra, cf.~\cite[Lem.~34]{bakerrichter_invertible} and the proof of \Cref{prop:wt}. Moreover, as a left adjoint, $\Phi_*$ preserves all colimits, so it restricts to a functor $\mathcal{C}_{n-1}^f \to \Mod_{BP}^{I_n-\text{tors}}$ which we also call $\Phi_*$. The next result is then proven using arguments analogous to the ones for \Cref{lem:monochromaticid}, \Cref{lem:localizationid}, and \Cref{lem:completionid}.

\begin{thm}
With notation as in~\Cref{sec:modulespectra} and~\Cref{sec:globalspectra}, we have the following comparison between local duality contexts.
\begin{enumerate} 
 \item There is a commutative diagram 
\[
\xymatrix{\mathcal{C}_{n-1}^f \ar@<0.5ex>[r]^-{} \ar[d]_{\Phi_*} & \Sp_{} \ar[d]_{\Phi_*} \ar@<0.5ex>[l]^-{C_{n-1}^f} \ar@<0.5ex>[r]^-{L_{n-1}^f} & \Sp_{E(n-1)}^f \ar@<0.5ex>[l]  \ar[d]^{\Phi_*} \\
\Mod_{BP}^{I_n-\mathrm{tors}} \ar@<0.5ex>[r]^-{} & \Mod_{BP} \ar@<0.5ex>[l]^-{\Gamma_{I_n}} \ar@<0.5ex>[r]^-{L_{I_n}} & \Mod_{BP}^{I_n-\mathrm{loc}} \ar@<0.5ex>[l]}
\]
with left adjoints displayed on top. In particular, there is an equivalence $L_{n-1}M \simeq L_{n-1}^fM \simeq L_{I_{n}}M$ for any $M \in \Mod_{BP}$.
 \item When restricted to the category of $BP$-module spectra, there is a natural equivalence
 \[L_{BP \otimes F(n)} \simeq \Lambda^{I_n}.\]
\end{enumerate}
\end{thm}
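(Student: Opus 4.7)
The plan is to transport the three lemmas \Cref{lem:monochromaticid}, \Cref{lem:localizationid}, and \Cref{lem:completionid} across the adjunction $\Phi_* \dashv \Phi^*$, replacing the smashing localization $L_n$ by the symmetric monoidal left adjoint $\Phi_* = BP \otimes (-)$. The essential input we need up front is the $\Mod_{BP}$-analogue of \Cref{prop:torsionfunctorspectra}: that the tower $(BP \otimes M_i(n))_i$ is strongly Koszul for the local duality context $(\Mod_{BP}, BP \otimes F(n))$, so that $\Gamma_{I_n}$ on $\Mod_{BP}$ is computed by smashing with $\colim_i (BP \otimes DM_i(n))$ over $BP$. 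This is presumably the content of \Cref{sec:modulespectra}, and its proof parallels the inductive argument of \Cref{prop:torsionfunctorspectra}.

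For the torsion square of (1), the left-hand inclusion square commutes because $\Phi_*$ preserves colimits and sends the generator $F(n)$ of $\mathcal{C}_{n-1}^f$ to the generator $BP \otimes F(n)$ of $\Mod_{BP}^{I_n-\mathrm{tors}}$. For the square of right adjoints, the direct calculation
\[
\Phi_*(C_{n-1}^f X) \simeq BP \otimes X \otimes \colim_i DM_i(n) \simeq \Phi_*(X) \otimes_{BP} \colim_i (BP \otimes DM_i(n)) \simeq \Gamma_{I_n}(\Phi_* X)
\]
exploits that $C_{n-1}^f$ and $\Gamma_{I_n}$ are both smashing, together with the identification of $BP \otimes DM_i(n)$ with the $BP$-linear dual of $BP \otimes M_i(n)$, valid because $M_i(n)$ is finite. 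Taking cofibers of $C_{n-1}^f \to \mathrm{id}$ and $\Gamma_{I_n} \to \mathrm{id}$ then produces the commuting localization square. Specializing that square at $X = S^0$ and smashing with an arbitrary $M \in \Mod_{BP}$ over $BP$ yields $L_{n-1}^f M \simeq L_{I_n} M$, since both sides equal $M \otimes_{BP}$ the respective unit. The remaining equivalence $L_{n-1} M \simeq L_{n-1}^f M$ is the telescope conjecture restricted to $\Mod_{BP}$; to prove it one checks that $L_{n-1}^f M$ is already $E(n-1)$-local, which rests on the Landweber exactness of $E(n-1)$ as a $BP_*$-module. This is the one step that does not proceed by pure formalism, and I expect it to be the main obstacle.

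For part (2), the analogue of \Cref{cor:completion} in $(\Mod_{BP}, BP \otimes F(n))$ gives
\[
\Lambda^{I_n} N \simeq \lim_i (BP \otimes M_i(n)) \otimes_{BP} N \simeq \lim_i M_i(n) \otimes N,
\]
and it remains to identify this with $L_{BP \otimes F(n)} N$. Since $N' \otimes_{BP} (BP \otimes F(n)) \simeq N' \otimes F(n)$ by the projection formula, a $BP$-module $N'$ is $BP \otimes F(n)$-acyclic if and only if its underlying spectrum is $F(n)$-acyclic; hence the Bousfield localization $L_{BP \otimes F(n)}$ is computed underlyingly by $L_{F(n)}$, which by \Cref{thm:globalspectralocduality} equals $\lim_i M_i(n) \otimes (-)$, matching $\Lambda^{I_n}$ as required.
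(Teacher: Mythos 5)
Your overall strategy---transport the comparison lemmas across the free-forgetful adjunction $\Phi_* \dashv \Phi^*$, using that $\Phi_* = BP \otimes (-)$ is (sufficiently) monoidal and that $(BP \otimes M_i(n))_i$ is strongly Koszul for $(\Mod_{BP}, BP \otimes F(n))$---matches the paper's intent, but you supply explicit smashing-object calculations where the paper compresses the argument into a single sentence: ``because every module spectrum is a retract of a free one, the equivalences of localization and completion functors on $\Mod_{BP}$ follow easily.'' The two are compatible: your projection-formula identifications $\Phi_*(C_{n-1}^f X) \simeq \Gamma_{I_n}(\Phi_* X)$ and $\Lambda^{I_n} N \simeq \lim_i M_i(n) \otimes N$ are exactly the content one extracts on free modules, and then the retract-of-free principle is the formal device for propagating the identification to all of $\Mod_{BP}$. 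Your part (2) argument via ``$N'$ is $BP \otimes F(n)$-acyclic in $\Mod_{BP}$ iff its underlying spectrum is $F(n)$-acyclic'' is a clean way to see the same thing.

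The one place you correctly flag a non-formal step, the equivalence $L_{n-1}^f M \simeq L_{n-1} M$ for $BP$-modules, is indeed the only genuinely non-categorical input. The paper resolves it by citation to \cite[Cor.~1.10]{hovey_csc} rather than by argument, and your diagnosis---that the key point is that $L_{n-1}^f M$ is already $E(n-1)$-local for a $BP$-module $M$, ultimately resting on Landweber exactness---is essentially how that reference proceeds. So there is no gap in understanding; you have merely sketched what the paper outsources. In total: same route, more detail filled in on your part, one step known-but-unproved that the paper handles by reference.
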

\begin{proof}
As mentioned above, the proof is mostly formal. Because every module spectrum is a retract of a free one, the equivalences of localization and completion functors on $\Mod_{BP}$ follow easily. Finally, the fact that $L_{n-1}^fM \simeq L_{n-1}M$ for $BP$-module spectra is proven in~\cite[Cor.~1.10]{hovey_csc}.
\end{proof}

Note that this theorem recovers Theorem 6.1 and Proposition 6.6 of~\cite{gm_localhomology}. Similar results hold for the adjunction
\[\xymatrix{\Sp_{E(n)} \ar@<0.5ex>[r]^-{- \otimes E(n)} & \Mod_{E(n)}, \ar@<0.5ex>[l]}\]
so we also obtain the results of~\cite[\S 3]{bs_centralizers} as a special case. This local comparison is summarized in the following dictionary:
\renewcommand{\arraystretch}{1.4}
\begin{center}
    \begin{tabular}{| l | l |}
    \hline 
    $\Sp_{E(n)}$ & $\Mod_{E(n)}$ \\ \hline
    $M_m$ & $\Gamma_{I_m}$ \\ 
    $L_{m-1}$ & $L_{I_m}$ \\
    $L_{E(m,n)}$ & $\Lambda^{I_{m}}$ \\ \hline
    \end{tabular}
\end{center}
Here, the torsion, localization, and completion functors are constructed from the local duality contexts $(\Sp_{E(n)},L_nF(m))$ and $(\Mod_{E(n)},E(n) \otimes F(n))$, respectively.

\section{Local duality in chromatic homotopy theory}\label{sec:duality-chromatic}

We now move on to the study of local duality in the category of comodules for $(BP_*,BP_*BP)$ and $(E_*,E_*E)$, where $E_*$ is any Landweber exact $BP_*$-algebra of height $n$, such as Morava $E$-theory or Johnson--Wilson theory. We begin by constructing our local duality functors for the duality context $(\Stable_{BP_*BP},BP_*/I_{n+1})$, and comparing this to work of Hovey and Hovey--Strickland~\cite{hovey_chromatic,hs_localcohom,hs_leht}. There is an adjoint pair 
\[
\xymatrix{
\Phi_*\colon\Stable_{BP_*BP} \ar@<0.5ex>[r] & \ar@<0.5ex>[l] \Stable_{E_*E}\colon\Phi^\ast
}
\]
and we identify the localization functor $L_{I_{n+1}}$ arising from the local duality context above with the composite $\Phi^\ast \Phi_\ast$. Using this we prove that, for $k \le n+1$, the $I_k$-local objects in $\Stable_{BP_*BP}$  are the same as the $I_k$-local objects in $\Stable_{E_*E}$. Using this identification of $L_{I_{n+1}}$ we also prove that the spectral sequence constructed by Hovey and Strickland converging to $BP_*(L_nX)$ is a special case of the $E$-based Adams spectral sequence. Moreover, we obtain generalizations of this spectral sequence for any Landweber exact $BP_*$-algebra.  

\subsection{Local duality for $BP_*BP$-comodules}

Given a suitable ring spectrum $R$, the associated homology theory carries more structure than just that of an $R_*$-module; it is additionally a comodule over the Hopf algebroid $(R_*,R_*R)$, i.e., there is a lift of the form
\[
\xymatrix{& \Comod_{R_*R} \ar[d]^{\text{forget}} \\
\Sp \ar[r]_-{R_*} \ar@{-->}[ru]^{R_*} & \Mod_{R_*}.}
\]

Given a spectrum $X$, we can think of $R_*X$ as a complex concentrated in degree 0, and hence as an object in the derived category of $R_*R$-comodules, or alternatively in $\Stable_{R_*R}$. In particular, if $R = BP$, then the functor $BP_*$ gives a comparison between the duality contexts $(\Sp,F(n+1))$ and $(\Stable_{BP_*BP},BP_*F(n+1))$, where $F(n+1)$ is a type $(n+1)$-spectrum. By the thick subcategory theorem, we can assume that $F(n+1)=M(n+1)$ is a generalized type $(n+1)$ Moore spectrum, and we will do so throughout this section. Algebraically, it is perhaps more natural to consider the local duality context $(\Stable_{BP_*BP},BP_*/I_{n+1})$, where $I_{n+1} = (p,\ldots,v_n)$ is an invariant ideal, but it turns out that they create the same local cohomology and homology functors. This is a consequence of the following result. 

\begin{prop}\label{prop:wt}
Suppose $F(n+1)$ is a generalized Moore spectrum of type $(n+1)$, then 
\[
\Thick_{BP_*BP}(BP_*/I_{n+1}) \simeq \Thick_{BP_*BP}(BP_*F(n+1)).
\]
In particular, the local duality contexts  $(\Stable_{BP_*BP},BP_*/I_{n+1})$ and $(\Stable_{BP_*BP},BP_*F(n+1))$ are equivalent.
\end{prop}
\begin{proof}
Viewing both categories as subcategories of $\Stable_{BP_*BP}$, we will show that they are mutually contained in each other. Since $BP_*F(n+1)$ is $I_{n+1}$-torsion, we get $BP_*F(n+1) \in \Thick(BP_*/I_{n+1})$, so one inclusion is clear. To prove the converse,  let $(i_0,\ldots,i_{n})$ be the sequence of positive integers such that 
\[
BP_*F(n+1) \cong BP_*/(p^{i_0},\ldots,v_{n-1}^{i_{n-1}},v_{n}^{i_{n}}).
\]
Consider the following commutative diagram of cofiber sequences:
\[
\xymatrix{BP_{*}/(p^{i_0},\ldots,v_{n-1}^{i_{n-1}}) \ar[r]^-{v_{n}} \ar[d]^{v_{n}^{i_{n}}} & BP_{*}/(p^{i_0},\ldots,v_{n-1}^{i_{n-1}}) \ar[r] \ar[d]^-{v_{n}^{i_{n}}} & BP_{*}/(p^{i_0},\ldots,v_{n-1}^{i_{n-1}},v_{n}) \ar[d]^-{v_{n}^{i_{n}}\simeq 0}  \\
BP_{*}/(p^{i_0},\ldots,v_{n-1}^{i_{n-1}}) \ar[r]^-{v_{n}} \ar[d] & BP_{*}/(p^{i_0},\ldots,v_{n-1}^{i_{n-1}}) \ar[r] \ar[d] & BP_{*}/(p^{i_0},\ldots,v_{n-1}^{i_{n-1}},v_{n}) \ar[d] \\
BP_{*}/(p^{i_0},\ldots,v_{n-1}^{i_{n-1}},v_{n}^{i_{n}}) \ar[r]_{v_{n}} & BP_{*}/(p^{i_0},\ldots,v_{n-1}^{i_{n-1}},v_{n}^{i_{n}}) \ar[r] & C(v_n).}
\]
The bottom cofiber sequence shows that $C(v_n) \in \Thick(BP_*F(n+1))$, while the right vertical cofiber sequence gives 
\[
C(v_n) \simeq BP_{*}/(p^{i_0},\ldots,v_{n-1}^{i_{n-1}},v_{n}) \oplus \Sigma BP_{*}/(p^{i_0},\ldots,v_{n-1}^{i_{n-1}},v_{n}),\] 
hence $BP_{*}/(p^{i_0},\ldots,v_{n-1}^{i_{n-1}},v_{n}) \in \Thick(BP_*F(n+1))$. Similarly, we can prove that the cofiber $C(v_{n-1})$ of the multiplication by $v_{n-1}$ endomorphism of $BP_{*}/(p^{i_0},\ldots,v_{n-1}^{i_{n-1}})$ is 
\[
C(v_{n-1}) \simeq BP_{*}/(p^{i_0},\ldots,v_{n-2}^{i_{n-2}},v_{n-1}) \oplus \Sigma BP_{*}/(p^{i_0},\ldots,v_{n-2}^{i_{n-2}},v_{n-1}).
\]
Therefore, the cofiber of multiplication by $v_{n-1}$ on $BP_{*}/(p^{i_0},\ldots,v_{n-1}^{i_{n-1}},v_{n})$ is 
\[
BP_{*}/(p^{i_0},\ldots,v_{n-2}^{i_{n-2}},v_{n-1},v_n) \oplus \Sigma BP_{*}/(p^{i_0},\ldots,v_{n-2}^{i_{n-2}},v_{n-1},v_n),
\]
thus $BP_{*}/(p^{i_0},\ldots,v_{n-2}^{i_{n-2}},v_{n-1},v_n) \in \Thick(BP_*F(n+1))$. Continuing this process with $v_{n-2}$ and so on until $v_0=p$, we conclude $BP_*/I_n \in \Thick(BP_*F(n+1))$, hence the claim. 
\end{proof}

In either case, we can apply local duality to obtain the following diagram of stable categories and functors
  \begin{equation}\label{eq:derivedcomod2}
\xymatrix{& \Stable_{BP_*BP}^{I_{n+1}-\textrm{loc}} \ar@<0.5ex>[d] \ar@{-->}@/^1.5pc/[ddr] \\
& \Stable_{BP_*BP} \ar@<0.5ex>[u]^{L_{I_{n+1}}} \ar@<0.5ex>[ld]^{\G_{I_{n+1}}} \ar@<0.5ex>[rd]^{\Lambda^{I_{n+1}}} \\
\Stable_{BP_*BP}^{I_{n+1}-\textrm{tors}} \ar@<0.5ex>[ru] \ar[rr]_{\sim} \ar@{-->}@/^1.5pc/[ruu] & & \Stable_{BP_*BP}^{I_{n+1}-\textrm{comp}}, \ar@<0.5ex>[lu]}
\end{equation}
where the functors have the following properties, via~\Cref{thm:localdualitystablepsi}. 
\begin{thm}\label{thm:localdualitybp}
Let $(\Stable_{BP_*BP},BP_*/I_{n+1})$ be the local duality context above.  
\begin{enumerate}
 \item The functors $\Gamma_{I_{n+1}}$ and $L_{I_{n+1}}$ are smashing localization and colocalization functors and $\Lambda^{I_{n+1}}$ is the localization functor with category of acyclics given by $\Stable_{BP_*BP}^{{I_{n+1}}-\mathrm{loc}}$. Moreover, $\Gamma_{I_{n+1}}$ and $\Lambda^{I_{n+1}}$ satisfy $\Lambda^{I_{n+1}} \circ \Gamma_{I_{n+1}} \simeq \Lambda^{I_{n+1}}$ and $\Gamma_{I_{n+1}} \circ \Lambda^{I_{n+1}} \simeq \Gamma_{I_{n+1}}$ and they induce mutually inverse equivalences
 \[\xymatrix{\Stable_{BP_*BP}^{{I_{n+1}}-\mathrm{comp}} \ar@<1ex>[r]^{\Gamma_{I_{n+1}}}_{\sim} & \Stable_{BP_*BP}^{{I_{n+1}}-\mathrm{tors}}. \ar@<1ex>[l]^{\Lambda^{I_{n+1}}}}\]
 \item For $M,N \in \Stable_{BP_*BP}$, there is an equivalence
 \[\xymatrix{\iHom_{BP_*BP}(\Gamma_{I_{n+1}}M,N) \ar[r]^{\sim} & \iHom_{BP_*BP}(M,\Lambda^{I_{n+1}}N),}\]
 natural in each variable.
 \end{enumerate}
\end{thm}

\begin{rem}
	It is useful to remind the reader at this point that $\Stable_{BP_*BP}$ is monogenic, as $(BP_*,BP_*BP)$ is a Landweber Hopf algebroid, see ~\Cref{rem:monogenic}. 
\end{rem}

As noted previously the cohomology theory $BP_*$ can be thought of as a functor to $\Stable_{BP_*BP}$, via the lift
\[
\xymatrix{& \Comod_{BP_*BP} \ar[d]^{\text{forget}} \ar[r] & \Stable_{BP_*BP} \\
\Sp \ar[r]_-{BP_*} \ar@{-->}[ru]^{BP_*} & \Mod_{BP_*}.}
\]
 This functor is compatible with the local duality context $(\Sp,F(n))$ in the following sense.
\begin{prop}
	The functor $BP_*:\Sp \to \Stable_{BP_*BP}$ restricts to a functor \[BP_*:\mathcal{C}_{n}^f \to \Stable_{BP_*BP}^{I_{n+1}-\textnormal{tors}} .\] 
\end{prop}

\begin{proof}
As we have seen in~\Cref{prop:wt}, we can assume that $BP_*F(n+1)$ is the generator of $\Stable_{BP_*BP}^{I_{n+1}-\text{tors}}$. 

	 Suppose now that $X \xr{f} Y \xr{g} Z$ is a cofiber sequence of spectra such that $BP_*X$ and $BP_*Z$ are $I_{n+1}$-torsion. We claim that $BP_*Y$ is also $I_{n+1}$ torsion. To see this, let $y \in BP_*Y$. There then exists $t_z \in \N$ such that $I_{n+1}^{t_z}g(y) = g(I_{n+1}^{t_z}y)=0$, so that $I_{n+1}^{t_z}y \in \ker g$. 
	Therefore, we can choose $x \in BP_*X$ so that $f(x) = I_{n+1}^{t_z}y$. By assumption, there exists $t_x \in \N$ such that $I_{n+1}^{t_x}x = 0$, hence  $0 = f(I_{n+1}^{t_x}x) = I_{n+1}^{t_x}f(x) = I_{n+1}^{t_x}I_{n+1}^{t_z}y = I_{n+1}^{t_x+t_z}y$, so that $y$ is $I_{n+1}$-torsion.
	  It follows that $BP_*$ restricts to a functor
	  \[
	  \xymatrix{\Thick(F(n+1)) \ar[r] & \Stable_{BP_*BP}^{I_{n+1}-\text{tors}}.}
	  \]
	  Since $BP_*$ preserves filtered colimits, we see that it restricts to a functor from $\Loc(F(n+1)) = \mathcal{C}_{n}^f$ to $\Stable_{BP_*BP}^{I_{n+1}-\text{tors}}$, as desired.
\end{proof}

\subsection{Basic properties of localization on $\Stable_{BP_*BP}$}
In this section we wish to compare the functors $L_{I_{n+1}}$ and $\Gamma_{I_{n+1}}$ with work of Hovey and Strickland~\cite{hs_localcohom,hs_leht} on localization and torsion functors for $\Comod_{BP_*BP}$. Recall that a hereditary torsion theory $\mathcal{T}$ in a cocomplete abelian category $\mathcal{A}$ is a Serre subcategory closed under arbitrary direct sums, i.e., the abelian analogue of a localization subcategory of a stable $\infty$-category. Let
\[
\xymatrix{\Phi\colon (BP_*,BP_*BP) \ar[r] & (E(n)_*,E(n)_*E(n))}
\]
denote the evident map of Hopf algebroids, and let $\Phi_*\colon\Comod_{BP_*BP} \to \Comod_{E(n)_*E(n)}$ be the induced functor, and $\Phi^\ast$ its adjoint, as described in~\Cref{sec:comodintro}. Hovey and Strickland prove that $L^0_n=\Phi^\ast\Phi_*$ is localization with respect to the hereditary torsion theory consisting of $v_n$-torsion comodules, or equivalently $I_{n+1}$-torsion comodules.

Hovey and Strickland also define $T^0_nM$ to be the subcomodule of $v_n$-torsion elements in a comodule $M$. Both $L^0_n$ and $T^0_n$ are left exact, and hence have right derived functors $L_n^i$ and $T_n^i$ respectively. Our first goal is to relate these functors to the functors $\Gamma_{I_{n+1}}$ and $L_{I_{n+1}}$ constructed previously. 

We start with a preliminary lemma.

\begin{lem}\label{lem:localcohomseq}
For any $M \in \Comod_{BP_*BP}$, there is an exact sequence
\[
0 \to \sh^0(\Gamma_{I_{n+1}}M) \to M \to \sh^0(L_{I_{n+1}}M) \to \sh^{1}(\Gamma_{I_{n+1}}M) \to 0,
\]
and natural isomorphisms $\sh^{i}(L_{I_{n+1}}M) \cong \sh^{i+1}(\Gamma_{I_{n+1}}M)$ for $i>0$.
\end{lem}
\begin{proof}
This follows immediately from the long exact sequence associated to the fiber sequence 
\[
\xymatrix{\Gamma_{I_{n+1}}M \ar[r] & M  \ar[r] & L_{I_{n+1}}M.} 
\]
\end{proof}
We can now relate $\sh^*\Gamma_{I_{n+1}}=\Gamma^\ast_{I_{n+1}}$ and $\sh^*L_{I_{n+1}} = L_{I_{n+1}}^\ast$ to the functors $L_n^\ast$ and $T_n^\ast$ constructed by Hovey and Strickland. 

\begin{lem}\label{lem:torident}
Let $M$ be a $BP_*BP$-comodule thought of as an object of $\Stable_{BP_*BP}$, then there are isomorphisms
	\[
\Gamma^i_{I_{n+1}}(M) \cong T_n^i(M) \quad \text{ and } \quad L^i_{I_{n+1}}(M) \cong L_n^i(M) 
	\]
	for all $i \ge 0$. 
\end{lem}
\begin{proof}
By~\Cref{lem:partialdertorsion} there is an isomorphism
\[
\sh^{i}(\Gamma_{I_{n+1}} M) \cong \colim_k\uExt^i_{BP_*BP}(BP_*/I_{n+1}^k,M). 
\]
Moreover~\Cref{lem:torsionfunctorcomod} and~\eqref{eq:derivedtorcomodules} combine to show that $\colim_k\uExt^i_{BP_*BP}(BP_*/I_{n+1}^k,M)$ is the $i$-th right derived functor of the $I_{n+1}$-power torsion functor considered in~\eqref{eq:itorsion}, which is precisely $T_n^i(M)$. By~\cite[Thm. 3.1]{hs_localcohom} and \Cref{lem:localcohomseq}, this identifies $L^i_{I_{n+1}}M$ with $L_n^iM$ as required.
\end{proof}
The existence of the following spectral sequence was suggested by Hovey at the end of~\cite{hovey_chromatic}.

\begin{prop}
Let $M \in (\Stable_{BP_*BP})_{\le d}$ for some $d \in \Z$, then there exists a convergent spectral sequence $L_{I_{n+1}}^p(\sh_qM) \implies L^{p-q}_{I_{n+1}}M$.
\end{prop}
\begin{proof}
By assumption on $M$, we can construct this spectral sequence for $\omega M$ in $\D_{BP_*BP}$. Since $L_{n}^0$ is left exact and $\Comod_{BP_*BP}$ has enough injective objects, there is a convergent hypercohomology spectral sequence of the form
\[
L_{n}^p(\sh_qM) \implies L_{n}^{p-q}M
\]
associated to the total right derived functor of $L_n^0$, see~\cite[5.7.9 and Cor.~10.5.7]{weibel_homological}. In view of \Cref{lem:torident}, the claim follows. 
\end{proof}

\begin{prop}\label{prop:bptorsionexplicit}
Suppose $M \in \Stable_{BP_*BP}$ and let $n\ge 0$, then 
\[
\Gamma_{I_{n}} M \simeq \Sigma^{-n}BP_*/I_{n}^{\infty} \otimes M.
\]
\end{prop}
\begin{proof}
The functor $\Gamma_{I_{n}}$ is smashing, so it suffices to show the claim for $M=BP$. But $I_{n}$ is strongly invariant, hence \Cref{cor:stronglyinvarianttorsion} applies. 
\end{proof}

\begin{cor}
For all $k\ge 0$ and $n\ge 0$, there is an isomorphism of $BP_*$-comodules
\[
\sh^{*}\Gamma_{I_{n}}(BP_*/I_k) \cong 
\begin{cases}
\Sigma^{-(n-k)}BP_*/(p,\ldots,v_{k-1},v_k^{\infty},\ldots,v_{n-1}^{\infty}) & \text{if } k < n \\
BP_*/I_k & \text{if } k\ge n. 
\end{cases}
\]
\end{cor}
\begin{proof}
By \Cref{prop:bptorsionexplicit}, we have a natural isomorphism
\[
\sh^{*}\Gamma_{I_{n}}(BP_*/I_k) \cong \sh^{*-n}(BP_*/I_{n}^{\infty} \otimes BP_*/I_k).
\]
For fixed $k$, the inductive construction of $BP_*/I_{n}^{\infty}$ gives the result easily.
\end{proof}
Combining this corollary with~\Cref{lem:localcohomseq} and \Cref{lem:torident} immediately recovers the calculation given in~\cite[Thm.~B]{hs_localcohom}.

\subsection{Plethories on $\Stable_{BP_*BP}$}\label{sec:plethora}

Let $E$ be a Landweber exact $BP$-algebra of height $n$, i.e., a $BP$-algebra such that the functor
\[
\xymatrix{E_* \otimes_{BP_*}(-)\colon\Comod_{BP_*BP} \ar[r] & \Mod_{E_*}}
\]
is exact, and $n$ is the largest integer such that $E_*/I_n$ is non-zero. We now consider the duality contexts $(\Stable_{BP_*BP},BP_*/I_{k})$ and $(\Stable_{E_*E},E_*/I_k)$ for $k \le n+1$.  The goal of this subsection and the next is to prove that there are equivalences of stable categories
\[
\xymatrix{\Stable_{BP_*BP}^{I_k-\text{loc}} \ar[r]^-{\sim} & \Stable_{E_*E}^{I_k-\text{loc}}\ar[r]^-{\sim} & \Stable_{E(k-1)_*E(k-1)},}
\] 
thereby establishing a close link between the two local duality contexts. This is closely related to a result of Hovey~\cite[Thm.~C]{hovey_chromatic}. 

We start by giving an alternative description of the functor $L_{I_{n+1}}$. There is an algebra map $\phi:BP_* \to E_*$ and, since $E_*$ is Landweber exact, $\phi$ gives an exact functor $\Phi_*\colon \Thick_{BP_*BP}(BP_*) \to \Thick_{E_*E}(E_*)$ with $\Phi_*M = E_*\otimes_{BP_*}M$. By the universal property of ind-categories, we thus obtain a commutative diagram
\begin{equation}\label{eq:constructionofphi}
\xymatrix{\Thick_{BP_*BP}(BP_*) \ar[r] \ar[d]_{\Phi_*} & \Stable_{BP_*BP} \ar@{-->}@<-0.5ex>[d]_{\Phi_*} \\
\Thick_{E_*E}(E_*) \ar[r] & \Stable_{E_*E}, \ar@{-->}@<-0.5ex>[u]_{\Phi^*}}
\end{equation}
where the induced adjunction is denoted $(\Phi_*,\Phi^*)$ by a mild abuse of notation. In fact, we can give an explicit formula for the right adjoint $\Phi^*$.

\begin{lem}\label{lem:phirightadj}
The right adjoint $\Phi^*$ constructed in \eqref{eq:constructionofphi} is naturally equivalent to the derived functor of $E_*E$-primitives of the extended $BP_*BP$-comodule functor, i.e.,
\[
\xymatrix{\Phi^*(N) \ar[r]^-{\sim} & \Hom_{E_*E}(E_*,N\otimes_{BP_*}BP_*BP)}
\] 
is an equivalence for all $N \in \Stable_{E_*E}$. 
\end{lem}
\begin{proof}
For $N \in \Stable_{E_*E}$, first note that $N\otimes_{BP_*}BP_*BP$ can naturally be viewed as an object of both $\Stable_{E_*E}$ and $\Stable_{BP_*BP}$, so $\Hom_{E_*E}(E_*,N\otimes_{BP_*}BP_*BP) \in \Stable_{BP_*BP}$ and the statement of the lemma is meaningful. Viewing $N$ as an object of $\Stable_{BP_*BP}$ via the $BP_*BP$-comodule structure on $E_*$ in $N \otimes_{E_*} E_* \simeq N$, there exists a natural transformation of colimit preserving functors 
\[
\xymatrix{\Phi^*(N) \otimes_{BP_*} (-) \ar[r]^-{\sim} & \Phi^*(N \otimes_{BP_*} -).}
\]
This is an equivalence of functors because it is an equivalence when evaluated on $BP_*$ which generates $\Stable_{BP_*BP}$. We thus obtain a string of natural equivalences:
\begin{align*}
\Phi^*(N) & \simeq \Hom_{BP_*}(BP_*,\Phi^*(N)) \\
& \simeq \Hom_{BP_*BP}(BP_*,\Phi^*(N)\otimes_{BP_*}BP_*BP) \\
& \simeq \Hom_{BP_*BP}(BP_*,\Phi^*(N\otimes_{BP_*}BP_*BP)) \\
& \simeq \Hom_{E_*E}(E_*,N\otimes_{BP_*}BP_*BP),
\end{align*}
and it is straightforward to check that these maps are $BP_*BP$-comodule morphisms.
\end{proof}

\begin{prop}\label{prop:plethora}
The functor $\Phi^*\colon \Stable_{E_*E} \to \Stable_{BP_*BP}$ is plethystic, in the sense that it satisfies the following properties:
\begin{enumerate}
	\item The counit map $\Phi_*\Phi^*\to \Id$ is an equivalence, so $\Phi^*$ is conservative. 
	\item $\Phi^*$ has a left adjoint $\Phi_*$.
	\item $\Phi^*$ has a right adjoint $\Phi_!$.
\end{enumerate}
In particular, the pairs $(\Phi_* \dashv \Phi^*)$ and $(\Phi^*\dashv \Phi_!)$ are monadic and comonadic, respectively.
\end{prop}
\begin{proof}
As observed above, $\Phi_*$ has a right adjoint $\Phi^*$. In order to show that the counit of the adjunction $\Phi_* \dashv \Phi^*$ is an equivalence, we use the formula for $\Phi^*$ given in \Cref{lem:phirightadj}. Indeed, for any $M \in \Stable_{E_*E}$, the counit can be decomposed into equivalences:
\begin{align*}
\Phi_*\Phi^*M & \simeq E_* \otimes_{BP_*}\Hom_{E_*E}(E_*,M \otimes_{BP_*} BP_*BP) \\
& \simeq \Hom_{E_*E}(E_*,M \otimes_{BP_*} BP_*BP \otimes_{BP_*} E_*) \\
& \simeq \Hom_{E_*E}(E_*,M \otimes_{E_*} E_* \otimes_{BP_*} BP_*BP \otimes_{BP_*} E_*) \\
& \simeq \Hom_{E_*E}(E_*,M \otimes_{E_*} E_*E) \\
& \simeq M.
\end{align*}
This implies that $\Phi_*\Phi^* \simeq \Id$; in particular, $\Phi^*$ is fully faithful and hence conservative. Moreover, there are natural equivalences
\begin{align*}
\Hom_{BP_*BP}(BP_*, \Phi^* \colim_i M_i) & \simeq \Hom_{E_*E}(E_*,\colim_i M_i) \\
& \simeq \colim_i\Hom_{E_*E}(E_*, M_i) \\
& \simeq \colim_i\Hom_{BP_*BP}(BP_*, \Phi^*M_i) \\
& \simeq \Hom_{BP_*BP}(BP_*, \colim_i\Phi^*M_i)
\end{align*} 
for any filtered system $(M_i)_i$ in $\Stable_{BP_*BP}$. Therefore, $\Phi^*$ preserves all colimits, and thus admits a right adjoint $\Phi_!$ by the adjoint functor theorem. 
\end{proof}

\begin{rem}
This result is an algebraic incarnation of the fact that $\Stable_{E_*E}$ corresponds  to the category of ind-coherent sheaves on the open substack $\mathcal{U}(n)$ of the stack of commutative 1-dimensional formal groups $\mathcal{M}_{\mathrm{fg}}$.
\end{rem}

We claim that the composite $\Phi^*\Phi_*$ is equivalent to the localization functor $L_{I_{n+1}}$ corresponding to the local duality context $(\Stable_{BP_*BP},BP_*/I_{n+1})$. Before we can give the proof, we need an auxiliary result.

\begin{lem}\label{lem:bpunit}
For any $0 \le k \le n$ and any $M \in \Stable_{BP_*BP}$, the unit $\eta$ of the adjunction $(\Phi_*,\Phi^*)$ induces a natural equivalence
\[
\xymatrix{v_k^{-1}BP_*/I_k \otimes M \ar[r]^-{\sim} & v_k^{-1}BP_*/I_k \otimes \Phi^*\Phi_*(M).}
\]
\end{lem}
\begin{proof}
Consider the Hopf algebroids $(v_k^{-1}BP_*/I_k,v_k^{-1}BP_*BP/I_k)$ and $(v_k^{-1}E_*/I_k,v_k^{-1}E_*E/I_k)$. The algebra map $\phi\colon BP_* \to E_*$ then induces a commutative diagram
\[
\xymatrix{\Thick(v_k^{-1}BP_*/I_k) \ar[r] \ar[d]_{\Phi_*} & \Stable_{v_k^{-1}BP_*BP/I_k} \ar@{-->}[d]_{\Phi_*}^{\sim} & \Stable_{BP_*BP} \ar[l]_-{\Phi_{BP}} \ar[d]^{\Phi_*} \\ 
\Thick(v_k^{-1}E_*/I_k) \ar[r] & \Stable_{v_k^{-1}E_*E/I_k} & \Stable_{E_*E} \ar[l]^-{\Phi_E}}
\]
in which $\Phi_{BP}$ and $\Phi_{E}$ denote the obvious base change functors. By construction, the middle vertical functor $\Phi_*$ is an equivalence  of stable categories if and only if 
\[
\End(v_k^{-1}BP_*/I_k) \simeq \End(v_k^{-1}E_*/I_k),
\]
which is the content of Hovey's change of rings theorem~\cite[Thm.~6.4]{hovey_morita}.

It thus suffices to show that 
\[
\xymatrix{\Phi_{E}\Phi_*(\eta_M) \simeq \Phi_*\Phi_{BP}(\eta_M)\colon v_k^{-1}E_*/I_k \otimes \Phi_*M \ar[r] & v_k^{-1}E_*/I_k \otimes \Phi_*\Phi^*\Phi_*(M)}
\]
is an equivalence. The counit of the adjunction $(\Phi_*,\Phi^*)$ is a natural equivalence by \Cref{prop:plethora}, so the claim follows. 
\end{proof}

\begin{prop}\label{prop:philn}
There is a natural equivalence $L_{I_{n+1}} \xrightarrow{\sim} \Phi^*\Phi_*$.
\end{prop}
\begin{proof}
Indeed, the unit $\eta$ of the adjunction $(\Phi_*,\Phi^*)$ yields a fiber sequence
\[
\xymatrix{F(M) \ar[r] & M \ar[r]^-{\eta_M} & \Phi^*\Phi_*M}
\]
for any $M \in \Stable_{BP_*BP}$, and we have to show that 
\begin{enumerate}
	\item $\Phi^*\Phi_*M$ is an object of $\Stable_{BP_*BP}^{I_{n+1}-\mathrm{loc}}$, and
	\item $F(M) \in \Stable_{BP_*BP}^{I_{n+1}-\mathrm{tors}}$. 
\end{enumerate}
In order to prove (1), note that 
\[
\Hom(BP_*/I_{n+1},\Phi^*\Phi_*M) \simeq \Hom(E_*/I_{n+1},\Phi_*M) \simeq 0
\] 
as $E_*/I_{n+1}=0$, hence $\Phi^*\Phi_*M \in \Stable_{BP_*BP}^{I_{n+1}-\mathrm{loc}}$. In particular, the counit $\eta$ factors through a natural transformation $L_{I_{n+1}} \to \Phi^*\Phi_*$. 

For $0\le k \le n$, consider the fiber sequences
\[
\xymatrix{F(v_k^{-1}BP_*/I_k^{\infty} \otimes M) \ar[r] & v_k^{-1}BP_*/I_k^{\infty}\otimes M \ar[r]^-{\eta_k} & \Phi^*\Phi_*(v_k^{-1}BP_*/I_k^{\infty}\otimes M).}
\]

On the one hand, since $\Phi^\ast\Phi_\ast$ commutes with colimits, by passing to the localizing subcategory generated by $v_k^{-1}BP_*/I_k$ and using~\Cref{lem:bpunit}, we see that the maps $\eta_k$ are equivalences, hence 
\[
F(v_k^{-1}BP_*/I_k^{\infty}\otimes M) \simeq 0\]
for $0\le k \le n$. On the other hand, $F$ is exact, so there are fiber sequences
\[
\xymatrix{F(BP_*/I_k^{\infty}\otimes M) \ar[r] & F(v_k^{-1}BP_*/I_k^{\infty}\otimes M) \ar[r] &F(BP_*/I_{k+1}^{\infty}\otimes M).}
\]

Inductively, we thus obtain $F(M) \simeq \Sigma^{-(k+1)} F(BP_*/I_{k+1}^{\infty}\otimes M)$ if $k \le n$. But $\Phi^*\Phi_*(BP_*/I_{n+1}^{\infty}\otimes M) \simeq \Phi^*(E_* \otimes BP_*/I_{n+1}^{\infty}\otimes M) \simeq 0$, hence
\[
F(M) \simeq \Sigma^{-(n+1)}F(BP_*/I_{n+1}^{\infty}\otimes M) \simeq \Sigma^{-(n+1)} BP_*/I_{n+1}^{\infty}\otimes M,
\]
which is clearly $I_{n+1}$-torsion, thereby verifying (2). 
\end{proof}
\begin{rem}
	This result should be compared to the situation for $\Comod_{BP_*BP}$, where Hovey and Strickland define $L^0_n$ as the composite $\Phi^\ast \Phi_*$. 
\end{rem}
\begin{cor}
There is a natural equivalence $\Delta_{I_{n+1}} \simeq \Phi^*\Phi_!$.
\end{cor}
\begin{proof}
On the one hand, \Cref{prop:plethora} implies that $\Phi^*\Phi_!$ is right adjoint to $\Phi^*\Phi_*$, as
\[
\Hom(\Phi^*\Phi_*M,N) \simeq \Hom(\Phi_*M,\Phi_!N) \simeq \Hom(M,\Phi^*\Phi_!N)
\]
for all $M,N \in \Stable_{BP_*BP}$. On the other hand, by \Cref{prop:philn} and \Cref{thm:hps}(4), $\Delta_{I_{n+1}}$ is also right adjoint to $L \simeq \Phi^*\Phi_*$, so $\Phi^*\Phi_! \simeq \Delta_{I_{n+1}}$.
\end{proof}

\subsection{Base-change for $E_*E$-comodules}\label{sec:basechange}

Using the results of the previous subsection, we are now ready to prove a general base-change result for $E_*E$-comodules. 

\begin{thm}\label{thm:stablebasechange}
Let $E$ be a Landweber exact $BP$-algebra of height $n$, then the ring map $\phi\colon BP_* \to E_*$ induces a natural equivalence
\[
\xymatrix{\Stable_{BP_*BP}^{I_k-\mathrm{loc}} \ar[r]^-{\sim} & \Stable_{E_*E}^{I_k-\mathrm{loc}}}
\]
for any $k\le n+1$. 
\end{thm}

\begin{proof}
We will first prove the special case $k=n+1$, i.e., that the map $\phi\colon BP_* \to E_*$ induces an equivalence
\[
\xymatrix{\Stable_{BP_*BP}^{I_{n+1}-\text{loc}} \ar[r]^-{\sim} & \Stable_{E_*E}.}
\]
We have already seen that the adjunction $(\Phi_*,\Phi^*)$ restricts to an adjunction
\[
\xymatrix{\Stable_{BP_*BP}^{I_{n+1}-\mathrm{loc}} \ar@<0.5ex>[r]^-{\Phi_*} & \Stable_{E_*E} \ar@<0.5ex>[l]^-{\Phi^*}}
\]
which, by a mild abuse of notation, we also call $(\Phi_*,\Phi^*)$. By \Cref{prop:philn}, $\Phi_*$ sends the generator $L_{I_{n+1}}BP_*$ of $\Stable_{BP_*BP}^{I_{n+1}-\mathrm{loc}}$ to $\Phi_*L_{I_{n+1}}BP_* \simeq \Phi_*BP_* \simeq E_*$, which is the generator of $\Stable_{E_*E}$. Hence we have reduced to showing that the functor $\Phi_*$ is fully faithful, and this again follows from \Cref{prop:philn}:
\[
\Hom_{E_*E}(E_*,E_*) \simeq \Hom_{BP_*BP}(BP_*,L_{I_{n+1}}BP_*) \simeq \Hom_{BP_*BP}(L_{I_{n+1}}BP_*,L_{I_{n+1}}BP_*).
\]
In order to prove the general result, it thus suffices to note that, for $M \in \Stable_{E_*E}$ and $k\le n+1$, we have $\Hom_{E_*E}(E_*/I_k,M)=0$ if and only if $\Hom_{BP_*BP}(BP_*/I_k,\Phi^*M) \simeq 0$.
\end{proof}
\begin{rem}
The case $k=n+1$ of this theorem could also be deduced directly from~\cite[Thm.~C]{hovey_chromatic} using the methods of~\Cref{sec:hovcomparasion}. 
\end{rem}

As an immediate consequence of this theorem, we obtain:

\begin{cor}\label{cor:ethybasechange}
For any $n$ and $k \le n+1$, there is a natural equivalence of stable categories
\[
\xymatrix{\Stable_{E_*E}^{I_{k}-\mathrm{loc}} \ar[r]^-{\sim} & \Stable_{E(k-1)_*E(k-1)}.}
\] 
More generally, this is true for any Landweber exact $BP_*$-algebra of height $k-1$ in place of $E(k-1)_*$.
\end{cor}
\begin{proof}
This follows from \Cref{thm:stablebasechange} and the fact that 
\[
\Stable_{E(k-1)_*E(k-1)}^{I_{k}-\mathrm{loc}} \simeq \Stable_{E(k-1)_*E(k-1)}
\]
via the natural inclusion.
\end{proof}

\begin{rem}
The analogous result for the torsion categories does not hold. To be precise, the adjunction $(\Theta_*,\Theta^*)$ induced by the map of ring spectra $\theta\colon E \to E/I_k$ factors through the torsion category as follows
\[
\xymatrix{\Stable_{E_*E} \ar[r]^{\Theta_*} & \Stable_{E_*E/I_k} \\
\Stable_{E_*E}^{I_k-\text{tors}}, \ar@{-->}[ru]_{\tilde{\Theta}_*} \ar[u]}
\]
where only the left adjoints of the three adjunctions are displayed. Moreover, it is easy to check that $\tilde{\Theta}_*$ sends the generator $E_*/I_k$ of $\Stable_{E_*E}^{I_k-\text{tors}}$ to the generator of $\Stable_{E_*E/I_k}$. However, by comparing the endomorphisms of $E_*/I_k$ in these two categories, one can see that $\tilde{\Theta}_*$ is not fully faithful. 
\end{rem}

We can translate \Cref{thm:stablebasechange} into a statement about the compatibility of the various localization functors. To this end, let $k \le n+1$ and write 
\[
\xymatrix{\Stable_{E_*E} \ar@<0.5ex>[r]^-{L_{I_{k}}^E} & \Stable_{E_*E}^{I_k-\mathrm{loc}} \ar@<0.5ex>[l]^-{\iota_{I_k}^E}}
\]
for the adjunction resulting from the local duality context $(\Stable_{E_*E},E_*/I_k)$, and similarly $(L_{I_{k}}^{BP},\iota_{I_k}^{BP})$ for the $BP$-version. 
 
\begin{cor}\label{cor:compatiblelocfun}
For $k\le n+1$, there is a natural equivalence $\Phi_*L_{I_k}^{BP} \simeq L_{I_{k}}^E\Phi_*$.
\end{cor}
\begin{proof}
By construction, there is a natural equivalence $\iota_{I_k}^{BP}\Phi^* \simeq \Phi^*\iota_{I_k}^{E}$: Indeed, the proof of \Cref{thm:stablebasechange} makes this clear for $k=n+1$, and the general case follows from this by restricting to the appropriate subcategories. Therefore, there exists a string of natural equivalences
\begin{align*}
\Hom_{E_*E}(\Phi_*L_{I_k}^{BP}M,N) & \simeq \Hom_{BP_*BP}(L_{I_k}^{BP}M,\Phi^*N) \\
& \simeq \Hom_{BP_*BP}(M,\iota_{I_k}^{BP}\Phi^*N) \\
& \simeq \Hom_{BP_*BP}(M,\Phi^*\iota_{I_k}^{E}N) \\
& \simeq \Hom_{E_*E}(\Phi_*M,\iota_{I_k}^{E}N) \\
& \simeq \Hom_{E_*E}(L_{I_{k}}^{E}\Phi_*M,N),
\end{align*}
where $M \in \Stable_{E_*E}^{I_k-\mathrm{loc}}$ and $N \in \Stable_{BP_*BP}$. The claim thus follows from the uniqueness of adjoints. 
\end{proof}

This gives us the following change of rings theorem.

\begin{cor}
  Suppose $k \le n$, $M,N \in (\Stable_{E_*E})_{\le d}$ for some $d \in \Z$, and that $N$ satisfies $L_{I_{k+1}}N\simeq N$, then 
  \[
\Ext^{\ast}_{E_*E}(M,N) \cong \Ext^{\ast}_{v_k^{-1}E_*E}(v_k^{-1}M,v_k^{-1}N). 
  \]
In particular, the condition on $N$ holds if $L^0_{I_{k+1}}N \simeq N$ and $L_{I_{n+1}}^iN \simeq 0$ for all $i\ne 0$. 
\end{cor}
\begin{proof}
There are canonical maps $BP_* \xrightarrow{\phi} E \xrightarrow{\phi'} v_k^{-1}E$ of Landweber exact algebras, inducing a diagram
\[
\xymatrix{& \Stable_{BP_*BP} \ar[d]^{L_{I_{k+1}}^{BP}} \ar@/_1pc/[ldd]_{\Phi'_*\Phi_*} \ar@/^1pc/[rdd]^{L_{I_{k+1}}^{E}\Phi_*} \\ 
& \Stable_{BP_*BP}^{I_{k+1}-\mathrm{loc}} \ar[ld]^{\sim} \ar[rd]_{\sim} \\
\Stable_{v_{k}^{-1}E_*E}  & & \Stable_{E_*E}^{I_{k+1}-\mathrm{loc}} \ar@{-->}[ll]_{\rho}^{\sim}}
\]
which commutes by \Cref{cor:ethybasechange} and \Cref{cor:compatiblelocfun}. This gives a natural equivalence $\Phi'_*\Phi_* \simeq \rho L_{I_{k+1}}^{E}\Phi_*$ of functors which, when evaluated on $\Phi^*(M)$ with $M \in \Stable_{E_*E}$, yields
\[
v_{k}^{-1}E_* \otimes_{E_*} M \simeq \Phi'_*(M) \simeq \rho L_{I_{k+1}}^{E}M.\]
The assumption on $N$ then implies that
\[
\begin{split}
\Hom_{E_*E}(M,N) &\simeq \Hom_{E_*E}(L_{I_{k+1}}^EM,L_{I_{k+1}}^EN) \\
&\simeq \Hom_{v_k^{-1}E_*E}(v_k^{-1}M,v_k^{-1}N),
\end{split}
\]
where the last equivalence is induced by $\rho$. The result then follows from~\Cref{cor:derivedcathomcohom}, as $(E_*,E_*E)$ is a Landweber Hopf algebroid. Finally, if $L^0_{I_{k+1}}N \simeq N$ and $L^i_{I_{k+1}}N = 0$ for all $i\ne 0$, then \Cref{lem:localcohomseq} implies that $T^i_{I_{k+1}}(N)=0$ for all $i$, hence $L_{I_{k+1}}N\simeq N$.
\end{proof}

\subsection{The local cohomology spectral sequence}

In~\cite{hs_localcohom} Hovey and Strickland construct spectral sequences that compute the $BP$-homology of $L_nX$ and $C_nX$ for a spectrum $X$, starting from $L_n^iBP_*X$ and $T_n^iBP_*X$ respectively. Our aim in this section is to give an independent construction of the former spectral sequence, as well as a similar spectral sequence computing the $E(n)$-homology of $L_kX$ for $k \le n$. We will show that the spectral sequence that computes $BP_*(L_nX)$ is in fact the $E$-based Adams spectral sequence for $BP \otimes X$. Before we can construct these spectral sequences, we need a preliminary result. 

\begin{lem}\label{lem:bpe2term}
For $n\ge0$ and any $M \in \Stable_{BP_*BP}$, there is a natural equivalence
\[
\xymatrix{L_{I_{n+1}}M \simeq \Hom_{E_*E}(E_*,M \otimes_{BP_*}E_*\otimes_{BP_*}BP_*BP).}
\]
In particular, we have natural isomorphisms
\[
\sh^{s}L_{I_{n+1}}BP_*(X) \cong \Ext_{E_*E}^s(E_*,E_*(BP \otimes X))
\]
for all $s\ge 0$ and any $X \in \Sp$. 
\end{lem}
\begin{proof}
The first part follows immediately from \Cref{prop:philn} and \Cref{lem:phirightadj}. To prove the second claim, observe that $E_*(BP \otimes X) \cong \pi_*(E \otimes BP \otimes X)$ is isomorphic to 
\[
\pi_*(E \otimes X \otimes_{BP} BP \otimes BP) \cong E_*(X) \otimes_{BP_*}  BP_*BP \cong BP_*(X) \otimes_{BP_*} E_* \otimes_{BP_*} BP_*BP,
\]
where we used Landweber exactness of $BP_* \to E_*$ and the flatness of $BP_*BP$ over $BP_*$. Since both $E_*$ and $E_*(BP\otimes X)$ are objects of $(\Stable_{E_*E})_{\le 0}$, $\sh^{s}\Hom_{E_*E}$ does indeed compute the usual comodule Ext by \Cref{cor:derivedcathomcohom}, and the formula for $\sh^{s}L_{I_{n+1}}BP_*(X)$ follows from the first part.
\end{proof}

For an arbitrary spectrum $X$, the $E$-based Adams spectral sequence for $L_n(BP \otimes X)$ thus relates the local cohomology of $BP_*(X)$ to $BP_*(L_nX)$.

\begin{thm}\label{thm:hsspectralsequence}
	 Let $X$ be a spectrum, then there is a natural spectral sequence of $BP_*BP$-comodules with $E_2^{s,t} \cong (L_n^sBP_*X)_t$, converging conditionally and strongly to $BP_{t-s}(L_nX)$. 
		Furthermore, every element in $E_2^{0,\ast}$ that comes from $BP_*X$ is a permanent cycle. 
\end{thm}

\begin{proof}
For any spectrum $Y$, it is known that the $E$-based Adams spectral sequence with $E_2$-term $\Ext^{s,t}_{E_*E}(E_*,E_*Y)$ converges conditionally to $\pi_{t-s}L_nY$~\cite[Thm.~5.3]{hovey_sadofsky}. Combining~\Cref{lem:bpe2term} and \Cref{lem:torident} we see that the $E$-based Adams spectral sequence with $Y = BP \otimes X$ has the form
	\[
E_2^{s,t} \cong (L_n^sBP_*X)_t \implies BP_{t-s}(L_nX). 
	\]
Since there is a horizontal vanishing line on the $E_2$-page, the spectral sequence is also strongly convergent. 

	 A priori, this is just a spectral sequence of $\Z_p$-modules, however we claim that it is in fact a spectral sequence of $BP_*BP$-comodules. To see this, let $\overline E$ be the fiber of the unit map $S^0 \to E$. The canonical $E$-Adams resolution for $BP \otimes X$ has the form~\cite[Lem.~2.2.9]{ravenel_86}
	 \[
\xymatrix{
	BP \otimes X \ar[d] & \overline E \otimes BP \otimes X \ar[l] \ar[d] & \overline E^2 \otimes BP \otimes X \ar[l] \ar[d] & \cdots \ar[l] \\
	E \otimes BP \otimes X & E \otimes \overline E \otimes BP \otimes X & E \otimes \overline E^2 \otimes BP \otimes X.
}
	 \]
	 The associated exact couple that gives rise to the $E$-based Adams spectral sequence is
	 \[
	 \xymatrix{
D_1 \ar[rr] && D_1 \ar[dl] \\
& E_1 \ar[ul],
}
	 \]
	 with $D_1^{s,t} = \pi_{t-s}(\overline E^s \otimes BP \otimes X)$ and $E_1^{s,t} = \pi_{t-s}(E \otimes \overline E^s \otimes BP \otimes X)$. In particular, this is an exact couple in the category of $BP_*BP$-comodules, and it follows that the spectral sequence is a spectral sequence of $BP_*BP$-comodules. 

For the second part of the theorem note that there are maps of spectra $BP \to E$ and $\overline {BP} \to \overline E$, that give maps between the canonical $BP$-Adams resolution for $BP \otimes X$ and the canonical $E$-Adams resolution of $BP \otimes X$. The spectral sequence associated to the former has $E_2^{s,t} = 0$ if $s> 0$ and $E_2^{0,t} \cong BP_tX$ for all $t$. The induced map from this spectral sequence to our spectral sequence gives the claimed result. 
\end{proof}

Therefore, the local cohomology spectral sequence constructed by Hovey and Strickland \cite{hs_localcohom} is in fact a special case of the $E$-based Adams spectral sequence. Similarly, there is a spectral sequence for the torsion functor,
		\[
		E_2^{s,t} \cong (T_n^sBP_*X)_t \implies BP_{t-s}(C_nX),
		\] 
see~\cite[Thm.~5.8]{hs_localcohom}. Furthermore, we obtain generalizations of these spectral sequences  for any Landweber exact $BP_*$-algebra.

\begin{cor}\label{thm:ethyhsspectralsequence}
Let $X$ be a spectrum and $k \le n$.
	\begin{enumerate}
		\item There is a natural, conditionally and strongly convergent, spectral sequence of $E_*E$-comodules with 
		\[
		E_2^{s,t} \cong (L_k^sE_*X)_t \implies E_{t-s}(L_kX). 
		\] 
		Furthermore, every element in $E_2^{0,\ast}$ that comes from $E_*X$ is a permanent cycle. 
		\item There is a natural, conditionally and strongly convergent, spectral sequence of $E_*E$-comodules with 
		\[
		E_2^{s,t} \cong (T_k^sE_*X)_t \implies E_{t-s}(C_kX). 
		\] 
		Furthermore, every element in $E_2^{0,\ast}$ that comes from $E_*X$ is a permanent cycle. 
	\end{enumerate}
\end{cor} 
\begin{proof}
Applying the exact functor $\Phi_*(-)= E_*\otimes_{BP_*} -$ to the spectral sequence of \Cref{thm:hsspectralsequence}(1) gives a spectral sequence with $E_2$-term
\[
E_2^{s,t} \cong \Phi_*(L_k^sBP_*X)_t \cong (L_k^s\Phi_*BP_*X)_t \cong (L_k^sE_*X)_t,
\]
using~\Cref{cor:compatiblelocfun}. This spectral sequence has abutment $\Phi_*BP_{t-s}(L_kX) \cong E_{t-s}(L_kX)$. The second spectral sequence is obtained similarly. 
\end{proof}

\begin{rem}
	It is evident that the same methods produce similar spectral sequences with $E_n$ and $E_k$ in place of $E(n)$ and $E(k)$. 
\end{rem}

\section{Other duality contexts}\label{sec:other}
Due the abstract nature of~\Cref{thm:hps}, it is natural to consider other categories to which our framework applies. Three examples are of particular interest: quasi-coherent sheaves over a scheme, the stable category of equivariant spectra, and the stable motivic homotopy category, all of which we briefly outline in the following section.

\subsection{Local duality for schemes}

Let $X$ be a Noetherian and separated scheme, and let $Z \subseteq X$ be a closed subset with quasi-compact complement $X \backslash Z$. Then, $Z$ is the support $\text{Supp}(X/\mathcal{I})$ for some finite-type quasi-coherent $\mathcal{O}_{X}$-ideal $\mathcal{I}$. Let $\Mod_{\mathcal{O}_X}$ and $\QCoh(X)$ denote the categories of $\mathcal{O}_{X}$-modules and quasi-coherent $\mathcal{O}_{X}$-modules respectively, and let $\D_{\mathcal{O}_X}(X)$ and $\mathcal{D}_{qc}(X)$ be the associated derived categories. 

Recall that a perfect complex is a complex locally isomorphic to a bounded complex of vector bundles. In the case of $\mathcal{D}_{qc}(X)$, a perfect complex is strongly dualizable~\cite[Prop.~4.4]{derivedqcs}. Moreover $\mathcal{D}_{qc}(X)$ is compactly generated by a single perfect complex $\cG$~\cite[Thm. 3.1.1(2)]{bvdb} and is a stable category~\cite{derivedqcs}.  In particular, it has a closed symmetric monoidal structure, where we write $\iHom_{\mathcal{D}_{qc}(X)}(\mathcal{E},\mathcal{F})$ for the internal mapping object. This can differ from the ordinary hom sheaf $\iHom_{\D_{\mathcal{O}_X}(X)}(\mathcal{E},\mathcal{F})$, since the latter need not be quasi-coherent, even if $\mathcal{E}$ and $\mathcal{F}$ are. The relationship between the two is the following: There is a (nonderived) functor, called the coherator, $Q\colon\Mod_{\mathcal{O}_X} \to \QCoh(X)$, right adjoint to the exact inclusion functor $\iota$, which gives rise to a functor $\textbf{R}Q\colon\D_{\mathcal{O}_X}(X) \to \mathcal{D}_{qc}(X)$. We then have an equivalence
\[
\iHom_{\mathcal{D}_{qc}(X)}(\mathcal{E},\mathcal{F}) \simeq \textbf{R}Q\iHom_{\D_{\mathcal{O}_X}(X)}(\mathcal{E},\mathcal{F}),
\]
see~\cite[Sec.~3.7]{derivedqcs}. However, for perfect complexes, the two are equivalent, as the following lemma shows. 
\begin{lem}\label{lem:coherator}
The adjunction 
\[
\xymatrix{\mathcal{D}_{qc}(X) \ar@<0.5ex>[r]^{\iota} & \D_{\mathcal{O}_X}(X) \ar@<0.5ex>[l]^{\textbf{R}Q}}
\]
restricts to an equivalence on perfect $\mathcal{O}_X$-modules. 
\end{lem}
\begin{proof}
Since $\mathcal{O}_X \in \D_{\mathcal{O}_X}(X)$ is quasi-coherent, we have an equivalence $\mathcal{O}_X \simeq \textbf{R}Q(\mathcal{O}_X)$. But $\textbf{R}Q$ is exact, so it must be equivalent to the identity functor on all objects in the thick subcategory generated by $\mathcal{O}_X$. 
\end{proof}
Assuming that $\mathcal{O}_X/\mathcal{I}$ is a compact object of $\mathcal{D}_{\mathcal{O}_X}$, we define the category of quasi-coherent $\mathcal{I}$-torsion sheaves as 
\[
\mathcal{D}_{qc}(X)^{\mathcal{I}-\text{tors}} = \Locid{\mathcal{D}_{qc}(X)}(\mathcal{O}_{X}/\mathcal{I}).
\]
\color{black}
Considering the local duality context $(\mathcal{D}_{qc}(X),\mathcal{O}_X/\mathcal{I})$, our version of abstract local duality implies that, as usual, we have torsion, local, and completion functors as well as their adjoints:

\[
\xymatrix{& \mathcal{D}_{qc}(X)^{\mathcal{I}-\text{loc}} \ar@<0.5ex>[d] \ar@{-->}@/^1.5pc/[ddr] \\
& \mathcal{D}_{qc}(X) \ar@<0.5ex>[u]^{L_\mathcal{I}} \ar@<0.5ex>[ld]^{\Gamma_I} \ar@<0.5ex>[rd]^{\Lambda^\mathcal{I}} \\
\mathcal{D}_{qc}(X)^{\mathcal{I}-\text{tors}} \ar@<0.5ex>[ru] \ar[rr]_{\sim} \ar@{-->}@/^1.5pc/[ruu] & & \mathcal{D}_{qc}(X)^{\mathcal{I}-\text{comp}}. \ar@<0.5ex>[lu]}
\]
\Cref{thm:hps} also implies that there are equivalences
\[
\iHom_{\mathcal{D}_{qc}(X)}(\Gamma_\mathcal{I}\mathcal{E},\mathcal{F}) \simeq \iHom_{\mathcal{D}_{qc}(X)}(\mathcal{E},\Lambda^\mathcal{I}\mathcal{F})
\]
for all $\mathcal{E}, \mathcal{F} \in \mathcal{D}_{qc}(X)$. 
\begin{prop}
	For $\mathcal{E}, \mathcal{F} \in \mathcal{D}_{qc}(X)$, there are equivalences
	\[
\Gamma_{\mathcal{I}} \mathcal{E} \simeq \colim_k \iHom_{\mathcal{D}_{qc}(X)}(\mathcal{O}_X/\mathcal{I}^k,\mathcal{E})
	\]
and
\[
\Lambda^{\mathcal{I}}\mathcal{F} \simeq \lim_k \mathcal{O}_X/\mathcal{I}^k \otimes \mathcal{F}.
\]
\end{prop}
\begin{proof} 
We will use the criterion of \Cref{lem:torsionred} to show that the functor 
\[
\xymatrix{\Gamma_{\mathcal{I}}' = \colim_k \iHom_{\mathcal{D}_{qc}(X)}(\mathcal{O}_X/\mathcal{I}^k,-)\colon \mathcal{D}_{qc}(X) \ar[r] & \mathcal{D}_{qc}(X)^{\mathcal{I}-\text{tors}}}
\]
is equivalent to $\Gamma_{\mathcal{I}}$. The claim about $\Lambda^{\mathcal{I}}$ will then follow formally. Since $\Gamma_{\mathcal{I}}'$ is clearly smashing, it remains to construct a natural equivalence
\begin{equation}\label{eq:qcohtorsionmap}
\xymatrix{\colim_k \iHom_{\mathcal{D}_{qc}(X)}(\mathcal{O}_X/\mathcal{I}^k,\mathcal{O}_X) \otimes  \mathcal{O}_X/\mathcal{I} \ar[r]^-{\phi}_-{\sim} & \mathcal{O}_X/\mathcal{I}.}
\end{equation}
To this end, we first observe that the natural quotient maps $\mathcal{O}_X \to \mathcal{O}_X/\mathcal{I}^k$ form a compatible system, hence induce the desired map $\phi$ of \eqref{eq:qcohtorsionmap}. By \Cref{lem:coherator} and because colimits of quasi-coherent sheaves are quasi-coherent, we can consider $\phi$ as a map in $\D_{\mathcal{O}_X}(X)$; in particular, $\iHom$ can be interpreted as either the internal function object in $\mathcal{D}_{qc}(X)$ or $\D_{\mathcal{O}_X}(X)$. Consequently, $\phi$ is an equivalence if and only if it is an equivalence locally on $X$, so the result reduces to the affine case which was proven in \Cref{prop:Gamma=colimHom}.
\end{proof}
This identifies our version of local duality with that proved in~\cite[Rem.~0.4]{local_cohom_schemes}. We note that a slightly stronger result is also proven in~\cite[Thm.~0.3]{local_cohom_schemes}. Here, they define a torsion functor $\widetilde \Gamma_\mathcal{I}$ for all $\mathcal{E} \in \D_{\mathcal{O}_X}(X)$, and show that local duality also holds more generally between $\widetilde \Gamma_I$ and $\Lambda^\mathcal{I}$.

\subsection{Local duality in equivariant homotopy theory} 
Let $G$ be a finite group, and let $\Sp_G$ be the category of $G$-spectra indexed on a complete universe~\cite{lms_86,mandell_may}.  This is a symmetric monoidal category, see~\cite[Ch.~II.3]{lms_86} or \cite[Ch.~2]{mandell_may}; to be consistent with our usual notation we will write $\otimes$ for the monoidal product, and $\iHom(-,-)$ for the closed structure. The category $\Sp_G$ has a model as a stable category, see~\cite[Def.~5.10]{mnn_eqnilpotence} for example. 

Recall that the dualizable objects in $\Sp_G$ are precisely the retracts of finite $G$-CW spectra~\cite[Thm.~XVI.7.4]{may_96}. Since the unit $S^0$ of $\Sp_G$ is compact, dualizable implies compact; in fact, in $\Sp_G$ dualizable and compact are equivalent, see \cite[Prop.~3.1.3]{joachimi}. Let $H \subseteq G$ be a normal subgroup. By~\cite[Cor.~II.6.3]{lms_86}, $\Sigma^\infty (G/H)_+$ is dualizable, and hence compact in $\Sp_G$, and the collection $\{ \Sigma^\infty (G/H)_+,H \subseteq G\}$ is a generating set of compact  dualizable objects for $\Sp_G$. 

We recall the following definition.
\begin{defn}
	A family $\mathcal{F}$ of subgroups of a discrete group $G$ is a collection of subgroups that is closed under conjugation and taking subgroups. 
\end{defn}

Fix a family $\mathcal{F}$ of subgroups of $G$. There exists a $G$-space $E\mathcal{F}$ characterized by the property 
\[
(E\mathcal{F})^H \simeq 
\begin{cases}
\ast & \text{if } H \in \mathcal{F} \\
\varnothing & \text{otherwise}
\end{cases}
\]
inducing a cofiber sequence of $G$-spectra
\[
\xymatrix{E\mathcal{F}_+ \ar[r] & S^0 \ar[r] & \widetilde E\mathcal{F}_+.}
\]
Let $\mathcal{K} = \{ \Sigma^\infty(G/H)_+ | H \in \mathcal{F} \}$; by the above this is a set of compact objects, and as per usual this is enough for our theory of local duality to apply. We note analogous constructions are considered in~\cite[Sec.~4]{greenlees_axiomatic} and~\cite{mnn_eqnilpotence}. It follows from the proof of~\cite[Cor.~9.4.4]{hps_axiomatic} that $\mathcal{K}$ is closed under tensoring with the generators of $\Sp_G$, and so $\Locid{\Sp_G}(\mathcal{K})=\Loc_{\Sp_G}(\mathcal{K})$. We then consider the local duality context $(\Sp_G,\mathcal{K})$; in particular, define
\[
\Sp_G^{\mathcal{F}-\text{tors}} = \Loc_{\Sp_G}(\mathcal{K}). 
\]
\color{black}
Our framework implies that there are localization and completion functors, and corresponding full subcategories of $\Sp_G$, as indicated in Diagram~\ref{eq:abstractdiagramequivariant}:
\begin{equation}\label{eq:abstractdiagramequivariant}
\xymatrix{& \Sp_G^{\mathcal{F}-\text{loc}} \ar@<0.5ex>[d] \ar@{-->}@/^1.5pc/[ddr] \\
& \Sp_G \ar@<0.5ex>[u]^{L_\mathcal{F}} \ar@<0.5ex>[ld]^{\Gamma_\mathcal{F}} \ar@<0.5ex>[rd]^{\Lambda^{\mathcal{F}}} \\
\Sp_G^{\mathcal{F}-\text{tors}} \ar@<0.5ex>[ru] \ar[rr]_{\sim} \ar@{-->}@/^1.5pc/[ruu] & & \Sp_G^{\mathcal{F}-\text{comp}}. \ar@<0.5ex>[lu]}
\end{equation}

\begin{thm}
\begin{enumerate}
	\item For all $X,Y \in \Sp_G$, there is an adjunction
	\[
\iHom(\Gamma_{\mathcal{F}}(X),Y) \simeq \iHom(X,\Lambda^{\mathcal{F}}(Y)). 
	\] 
	\item The $\mathcal{F}$-torsion, $\mathcal{F}$-localization, and $\mathcal{F}$-completion functors are given by 
	\[
\begin{split}
\Gamma_\mathcal{F}(X) & \simeq E\mathcal{F}_+ \otimes X, \\
L_{\mathcal{F}}(X) & \simeq \widetilde E\mathcal{F}_+ \otimes X, \\
\Lambda^{\mathcal{F}}(X) & \simeq \iHom(E\mathcal{F}_+,X).
\end{split}
\]
\end{enumerate}
\end{thm}
\begin{proof}
	The first part follows immediately from~\Cref{thm:hps}. 

	The second part is essentially proved in~\cite[Sec.~4]{greenlees_axiomatic}. Greenlees shows that the following classes of $G$-spectra are equal:
	\begin{enumerate}
		\item $G$-spectra formed from spheres $G/H_+ \otimes S^n$ with $H \in \mathcal{F}$, and
		\item $G$-spectra $X$ such that $E \mathcal{F}_+ \otimes X \xr{\sim} X$ is an equivalence.
	\end{enumerate}  
	
	 Along with the fact $\widetilde{E}\mathcal{F}$ is $\mathcal{F}$-contractible, we see that $X \to \widetilde{E}\mathcal{F}\otimes X$ is localization away from $\mathcal{K}$, which implies the identification of $L_\mathcal{F}(X)$. The fiber sequence from~\Cref{thm:hps}
	\[
\Gamma_{\mathcal{F}}(X) \to X \to L_{\mathcal{F}}X
	\]
implies that $\Gamma_\mathcal{F}(X) \simeq EF_+ \otimes X$, and the identification of $\Lambda^{\mathcal{F}}(X)$ follows from the adjunction in Part (1). 
\end{proof}

\subsection{Local duality in motivic homotopy theory}
Let $S$ be a Noetherian scheme of finite Krull dimension, and $\Sp(S)$ be the stable motivic homotopy category constructed by Morel and Voevodsky~\cite{morel_voevodsky}. This construction was originally obtained using model categorical methods, however it is known by work of Robalo~\cite[Cor.~1.2]{robalo} that the $\infty$-category underlying $\Sp(S)$ (which we abusively denote by the same symbol) is a stable category.  

Now assume that $S = \Spec(k)$, for $k$ any field, and let $\mathbf{Sm}/k$ denote the category of smooth schemes of finite type over $k$. In contrast to the equivariant case, the dualizable and compact objects are no longer equivalent in $\Sp(k)$, at least not without restrictions on the field $k$. In particular, they are equivalent if $k$ has characteristic 0, see \cite[Prop.~5.2.9]{joachimi}. In general, the collection $\mathcal{G} = \{\Sigma^{2n,n}\Sigma^\infty U_+ | U \in \mathbf{Sm}/k,n\in \Z \}$ is a set of compact generators for $\Sp(k)$~\cite[Thm.~9.2]{dugger_isaksen}. 
\begin{rem}\label{rem:realization}
	When $k \subseteq \mathbb{C}$ there is a functor
	\[
R_k:\Sp(k) \to \Sp,
	\]
	see~\cite[Sec.~3.3]{morel_voevodsky} for the case $k = \mathbb{C}$ or~\cite[Ch.~4]{joachimi}. The functor $R_k$ commutes with colimits (see~\cite{isaksen_shkembi}), and can be promoted to a symmetric monoidal functor of stable categories~\cite[Ex.~1.3]{robalo}. Let $X = \Sigma^{2n,n}\Sigma^\infty U_+$ be a compact generator of $\Sp(k)$, then by~\cite[Prop.~5.2.10]{joachimi} $R_{k}(X)$ is a finite spectrum in $\Sp$. 
	\end{rem}

Assuming $k \subseteq \mathbb{C}$, there exist algebraic Morava $K$-theory spectra $AK(n)$~\cite{borghesi_k_theory}. In fact, there are two models described, based on two variants of constructing connective Morava $K$-theory $k(n)$ classically, namely killing generators in $\pi_*MU$, or beginning with $H\mathbb{F}_p$ and killing secondary cohomology operations to construct $k(n)$ as the inverse limit of its Postnikov tower. We will be interested in the first approach. Briefly, for $k \subseteq \mathbb{C}$ there is an algebraic cobordism spectrum $MGL$~\cite[Sec.~6.3]{voevodsky}, which is the analogue of $MU$ in classical stable homotopy theory. There are elements $a_i \in MGL_{2i,i},i \ge 1$, whose images under $R_k$ are the usual classes $a_i^{\text{top}} \in MU_{2i}$. The motivic Brown--Peterson spectrum is defined as 
\[
ABP = MGL_{(p)}/(a_i| i \ne p^j-1).
\]
With $v_0=p$ and $v_i = a_{p^i-1}$ for $i \ge 1$, define
\[
Ak(n) = ABP/(v_0,\ldots,v_{n-1},v_{n+1},\ldots) \text{ and } AK(n) = v_n^{-1}Ak(n).
\]

In the ordinary stable homotopy category we studied the local duality context $(\Sp,F(n))$, for $F(n)$ a finite type $n$-spectrum. Analogously, say that a (compact) motivic spectrum has type $n$ if $AK(n-1)_{\ast,\ast}(X) = 0$ and $AK(n)_{\ast,\ast}(X) \ne 0$. When $k=\mathbb{C}$ itself there exists a (compact) motivic spectrum of type $n$~\cite[Thm.~8.5.12]{joachimi}, which we denote by $AM(n)$. By~\cite[Sec.~8.6]{joachimi}, $R_{\mathbb{C}}(AM(n))$ is the type $n$ spectrum defined in~\cite[Thm.~C.3.2]{orangebook}.

Let $\mathcal{K} = AM(n)$. Consider now the local duality context $(\Sp(\mathbb{C}),\mathcal{K})$, and define\footnote{Here the use of $n-1$ instead of $n$ is for consistency with the conventions in the stable homotopy category.}
\[
{\Sp(\mathbb{C})}_{n-1}^{f-\text{tors}} = \Locid{\Sp(\mathbb{C})}(\cK). 
\]
\color{black}
Applying \Cref{thm:hps} to this context yields a diagram
\begin{equation}\label{eq:globalspectradiagrammotivic}
\xymatrix{& {\Sp(\mathbb{C})}^{f-\text{loc}}_{n-1} \ar@<0.5ex>[d] \ar@{-->}@/^1.5pc/[ddr] \\
& \Sp(\mathbb{C}) \ar@<0.5ex>[u]^{L_{n-1}^f} \ar@<0.5ex>[ld]^{\Gamma_{n-1}^f} \ar@<0.5ex>[rd]^{\Lambda_{n-1}^{f}} \\
{\Sp(\mathbb{C})}_{n-1}^{f-\text{tors}} \ar@<0.5ex>[ru] \ar[rr]_-{\sim} \ar@{-->}@/^1.5pc/[ruu] & & {\Sp(\mathbb{C})}^{f-\text{comp}}_{n-1}, \ar@<0.5ex>[lu]}
\end{equation}
as well as an equivalence
\[
\iHom(\Gamma_{n-1}^fX,Y) \simeq \iHom(X,\Lambda_{n-1}^fY)
\]
for all $X,Y \in \Sp(\mathbb{C})$. The functor $L_{n-1}^f$ is finite localization away from a type $n$ complex, in analogy with the functor denoted $L_{n-1}^f$ in the ordinary stable homotopy category. We note the following.
\begin{lem}
		The realization functor $R_\mathbb{C}\colon\Sp(\mathbb{C}) \to \Sp$ restricts to a functor
		\[
R_{\mathbb{C}}\colon{\Sp(\mathbb{C})}_{n-1}^{f-\text{tors}} \to \mathcal{C}_{n-1}^f.
		\]
\end{lem}
\begin{proof}
	The category ${\Sp(\mathbb{C})}_{n-1}^{f-\text{tors}}$ is compactly generated by $\{ AM(n) \otimes X | X \in \mathcal{G} \}$. 	As noted previously the realization functor takes $AM(n)$ to a type $n$ spectrum in the category of spectra, which we denote by $M(n)$. Since the construction of $\mathcal{C}_{n-1}^f$ is independent of the choice of type $n$ spectrum, it follows that $R_\mathbb{C}$ takes $AM(n)$ to a generator of $\mathcal{C}_{n-1}^f$. Since $R_{\mathbb{C}}$ is symmetric monoidal we use~\Cref{rem:realization} to see that $R(AM(n) \otimes X) = M(n) \otimes Y$ where $X \in \mathcal{G}$ and $Y$ is a finite spectrum. Since $M(n) \otimes Y \in \mathcal{C}_{n-1}^f$, $R_{\mathbb{C}}$ sends all the generators of ${\Sp(\mathbb{C})}_{n-1}^{f-\text{tors}}$ to objects of $\mathcal{C}_{n-1}^f$. Since $R_{\mathbb{C}}$ commutes with colimits it restricts to the torsion categories as claimed. 
\end{proof}

We do not know if it is true that $R_\mathbb{C}$ restricts to the local and complete categories, and nor are we able to construct explicit formulae for these functors. It would be interesting to know if there are analogues of~\Cref{thm:globalspectralocduality} for a suitable tower of motivic generalized Moore spectra, so that explicit descriptions could be given.

\section{Comparison of various local duality contexts} 
In \Cref{fig:ultimate} we show the various local duality contexts considered in this paper and the relationships between them. 
\begin{figure}[h!]
\[\resizebox{\textwidth}{!}{\xymatrix{
& \Sp_{E(m-1)}^f \ar@<0.5ex>[d] \ar@{-->}@/^1.5pc/[ddr] & & & \Stable_{BP_*BP}^{I_m-\mathrm{loc}} \ar@<0.5ex>[d] \ar@{-->}@/^1.5pc/[ddr] \\
& \Sp_{(p)} \ar@<0.5ex>[u]^{L_{m-1}^f} \ar@<0.5ex>[ld]^{C_{m-1}^f} \ar@<0.5ex>[rd]^{L_{F(m)}} & \ar@{~>}[r]^{BP_*} & & \Stable_{BP_*BP} \ar@<0.5ex>[u]^{L_{m}} \ar@<0.5ex>[ld]^{\Gamma_{I_m}} \ar@<0.5ex>[rd]^{\Lambda^{I_m}} \\
\mathrm{Loc}(F(m)) \ar@<0.5ex>[ru] \ar[rr]^{\sim} \ar@{-->}@/^1.5pc/[ruu] &  \ar@<-1ex>@{~>}[dd]_{L_n}  & \Sp_{F(m)} \ar@<0.5ex>[lu] &  \Stable_{BP_*BP}^{I_m-\mathrm{tors}} \ar@<0.5ex>[ru] \ar[rr]^{\sim} \ar@{-->}@/^1.5pc/[ruu] & \ar@<-1ex>@{~>}[dd]_{-\otimes_{BP_*} E(n)_*} & \Stable_{BP_*BP}^{I_m-\mathrm{comp}} \ar@<0.5ex>[lu] \\ \\
& \Sp_{E(m-1)} \ar@<0.5ex>[d] \ar@{-->}@/^1.5pc/[ddr] \ar@<-1ex>@{~>}[uu]_{\text{incl}} & & & \Stable_{E(n)_*E(n)}^{I_m-\mathrm{loc}} \ar@<0.5ex>[d] \ar@{-->}@/^1.5pc/[ddr] \ar@<-1ex>@{~>}[uu]_{\Phi_n^*} \\
& \Sp_{E(n)} \ar@<0.5ex>[u]^{L_{m-1}} \ar@<0.5ex>[ld]^{M_m} \ar@<0.5ex>[rd]^{L_{E(m,n)}} & \ar@{~>}[r]^{E(n)_*} & & \Stable_{E(n)_*E(n)} \ar@<0.5ex>[u]^{L_{m}} \ar@<0.5ex>[ld]^{\Gamma_{I_m}} \ar@<0.5ex>[rd]^{\Lambda^{I_m}} \\
\mathcal{M}_m \ar@<0.5ex>[ru] \ar[rr]^{\sim} \ar@{-->}@/^1.5pc/[ruu] &  \ar@<-1ex>@{~>}[dd]_{-\otimes E(n)}  & \Sp_{E(m,n)} \ar@<0.5ex>[lu] &  \Stable_{E(n)_*E(n)}^{I_m-\mathrm{tors}} \ar@<0.5ex>[ru] \ar[rr]^{\sim} \ar@{-->}@/^1.5pc/[ruu] & \ar@<-1ex>@{~>}[dd]_{\epsilon_*} & \Stable_{E(n)_*E(n)}^{I_m-\mathrm{comp}} \ar@<0.5ex>[lu] \\ \\
& \Mod_{E(n)}^{I_m-\mathrm{loc}} \ar@<0.5ex>[d] \ar@{-->}@/^1.5pc/[ddr] \ar@<-1ex>@{~>}[uu]_{\text{incl}} & & & \cD_{E(n)_*}^{I_m-\mathrm{loc}} \ar@<0.5ex>[d] \ar@{-->}@/^1.5pc/[ddr] \ar@<-1ex>@{~>}[uu]_{- \otimes_{E(n)_*} E(n)_*E(n)} \\
& \Mod_{E(n)} \ar@<0.5ex>[u]^{L_{I_m}} \ar@<0.5ex>[ld]^{\Gamma_{I_m}} \ar@<0.5ex>[rd]^{\Lambda^{I_m}} & \ar@{~>}[r]^{\pi_*} & & \cD_{E(n)_*} \ar@<0.5ex>[u]^{L_{I_m}} \ar@<0.5ex>[ld]^{\Gamma_{I_m}} \ar@<0.5ex>[rd]^{\Lambda^{I_m}} \\
\Mod_{E(n)}^{I_m-\mathrm{tors}} \ar@<0.5ex>[ru] \ar[rr]^{\sim} \ar@{-->}@/^1.5pc/[ruu] & &\Mod_{E(n)}^{I_m-\mathrm{comp}} \ar@<0.5ex>[lu] &  \cD_{E(n)_*}^{I_m-\mathrm{tors}} \ar@<0.5ex>[ru] \ar[rr]^{\sim} \ar@{-->}@/^1.5pc/[ruu] & & \cD_{E(n)_*}^{I_m-\mathrm{comp}} \ar@<0.5ex>[lu] 
}}\]
\caption{Topological and algebraic local duality contexts.}

\label{fig:ultimate}
\end{figure}
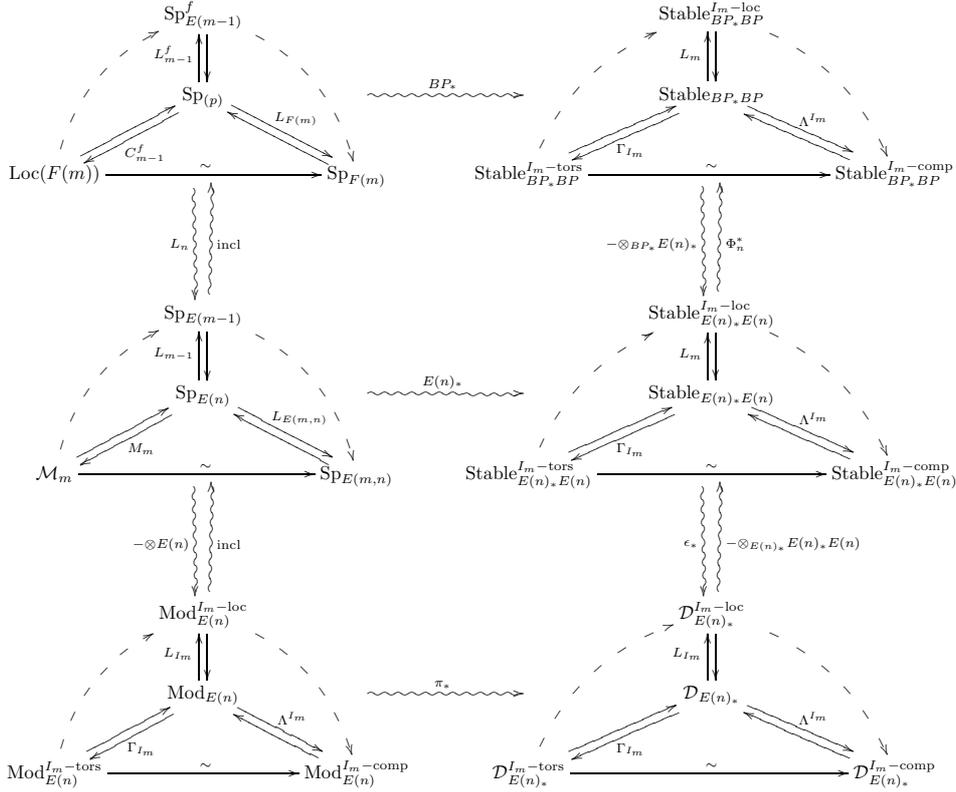
\newpage

\bibliography{duality}\bibliographystyle{alpha}

\end{document}